\newtheorem{lem}{Lemma}[section]
\newtheorem{teo}[lem]{Theorem}
\newtheorem{pro}[lem]{Proposition}
\newtheorem{cor}[lem]{Corollary}
\newtheorem{claim}[lem]{Claim}
\newtheorem*{con*}{Conjecture}
\newtheorem{Conj}{Conjecture}
\theoremstyle{definition}
\theoremstyle{remark}
\newtheorem*{rem*}{Remark}
\newtheorem{rem}[lem]{Remark}
\newcommand{\argu}{\hbox to 7truept{\hrulefill}}
\DeclareMathOperator{\D}{\mathcal D}
\DeclareMathOperator{\Id}{Id}
\DeclareMathOperator{\St}{St}
\DeclareMathOperator{\im}{Im}
\DeclareMathOperator{\Hom}{Hom}
\DeclareMathOperator{\Aut}{Aut}
\DeclareMathOperator{\Out}{Out}
\DeclareMathOperator{\ab}{ab}
\DeclareMathOperator{\Tor}{Tor}
\DeclareMathOperator{\Ext}{Ext}
\DeclareMathOperator{\lcm}{lcm}
\DeclareMathOperator{\length}{length}
\DeclareMathOperator{\prd}{pd}
\newcommand{\Z}{\mathbb{Z}}
\newcommand{\N}{\mathbb{N}}
\newcommand{\CC}{\mathbb{C}}
\newcommand{\Q}{\mathbb{Q}}
\newcommand{\cd}{\text{cd}}
\DeclareMathOperator{\FP}{\mathtt{FP}}
\newcommand{\normal}[1]{\langle\!\langle #1 \rangle\!\rangle}
\newcounter{marcocomments}
\newcounter{andreicomments}
\newcounter{pablocomments}
 \date{\today}
\begin{document}
\title  {Group pairs, coherence and Farrell-Jones Conjecture for $K_0$}
\author{Andrei Jaikin-Zapirain}
 \address{Instituto de Ciencias Matem\'aticas, CSIC-UAM-UC3M-UCM}
 \email{andrei.jaikin@icmat.es}
 \author{Marco Linton}
 \address{Instituto de Ciencias Matem\'aticas, CSIC-UAM-UC3M-UCM}
 \email{marco.linton@icmat.es}
 \author{Pablo S\'anchez-Peralta}
  \address{Departamento de Matem\'aticas, Universidad Aut\'onoma de Madrid}
    \email{pablo.sanchezperalta@uam.es}

\begin{abstract}

A  {group pair} $(G, X)$ consists of a group $G$ together with a $G$-set $X$.  
Such a pair encodes properties of $G$ relative to the stabilisers of points in $X$.  
In this paper, we show how to combine properties of group pairs and their stabilisers to prove coherence results for $G$ and its group algebra, as well as to study the quotient of $G$ obtained by killing the stabilisers.  

In particular, we prove that a torsion-free one-relator product of locally indicable groups is coherent provided that both factor groups are coherent. Moreover, we show that the group algebra of such a group over a field of characteristic $0$ is coherent whenever the group algebras of the factors are coherent.  

As other consequences of our methods, we also show that extensions of coherent locally indicable hyperbolic groups by $\Z$ are coherent and that groups admitting a Cohen–Lyndon presentation satisfy the Farrell–Jones Conjecture for $K_{0}$.

\end{abstract}

\maketitle

\section{Introduction} 
\subsection{Homological coherence and coherence of groups}
A group is called {\bf coherent} if all its finitely generated subgroups are finitely presented. This property has attracted significant attention in recent years, as reflected in a survey by Wise \cite{Wi20}.   In this paper, we prove the coherence of one-relator products of coherent locally indicable groups. 

\begin{teo} \label{main}
Let $A$ and $B$ be two locally indicable coherent groups and let $w\in A*B$ be an element that is not conjugated to an element in $A$ or $B$. Then the group $A*B/\normal{w}$ is coherent.
\end{teo}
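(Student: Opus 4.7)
The plan is to induct via the Magnus--Howie hierarchy for one-relator products of locally indicable groups, using the group-pair coherence machinery developed earlier in the paper to propagate coherence through the induction.

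By Brodski\u\i's theorem the quotient $G = A*B/\normal{w}$ is locally indicable, and by Howie's Magnus-style analysis it admits a splitting as an HNN extension whose vertex group is a one-relator product $G' = A'*B'/\normal{w'}$, where $A'$ and $B'$ are finitely generated subgroups of a free product of finitely many translates of $A$ and $B$, and $w'$ has strictly smaller $\phi$-complexity for a suitable non-trivial homomorphism $\phi\colon G \to \Z$. Since coherence and local indicability both pass to free products and to finitely generated subgroups, the factors $A'$ and $B'$ are again locally indicable and coherent, and $w'$ is not conjugate into a factor, so the inductive hypothesis applies and gives coherence of $G'$. The base case of the induction is reached when the complexity of $w$ drops to zero, in which case $G$ reduces to a free product of subgroups of $A$ and $B$ and is coherent by hypothesis.

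The main task is then to pass from coherence of $G'$ to coherence of $G$ across the Magnus--Howie splitting. This is where group pairs enter: one packages the splitting as a pair $(G, X)$ whose stabilisers are the edge/Magnus-type subgroups of the HNN extension, and invokes the relative coherence criterion for pairs that the paper has developed, which asserts (roughly) that if a pair $(G', X')$ is coherent and we perform a controlled quotient of the form corresponding to killing the stabilisers of a one-relator extension, then the resulting group is coherent. The bookkeeping required here is that the inductive hypothesis must be strengthened from plain coherence to coherence \emph{of the pair}, so that the property is actually propagated through the tower rather than lost after one step.

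The main obstacle is that generic HNN extensions of coherent groups are famously not coherent, so the inductive step cannot be concluded from any black-box statement about HNN extensions. What has to do the work is the very special structure of the Magnus--Howie splitting --- that the associated subgroups are Magnus subgroups of $G'$ satisfying strong relative finiteness properties --- combined with a relative-coherence invariant of pairs that is preserved under the corresponding quotient construction. Identifying the right invariant, verifying that it is preserved under Magnus--Howie splittings, and setting up a well-founded complexity on $w$ so that the induction terminates in a free-product base case, constitute the heart of the argument.
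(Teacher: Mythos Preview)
Your proposal is not a proof but a sketch of a strategy, and the strategy is not the one the paper uses. You propose to run the Magnus--Howie hierarchy and propagate a yet-to-be-identified pair-coherence invariant through each HNN step; you yourself concede in the last paragraph that identifying this invariant and checking it survives the splitting ``constitute the heart of the argument'', i.e.\ the actual content is missing. Two concrete gaps: (i) you never treat the case where $w$ is a proper power (local indicability of $G$ fails there, so your opening line is already wrong in that case); (ii) for the inductive step you invoke ``the relative coherence criterion for pairs that the paper has developed'', but the paper's criterion (\cref{cohomologicalcoherence}) requires $\cd_\Q(G,X)\le 2$ and $b_2^{(2)}(G,X)=0$ for a specific pair, and you neither name the pair nor verify either hypothesis for your HNN splitting.

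The paper's proof is not inductive at all. It splits into the proper-power case (done in \cite{HH23}) and the non-proper-power case, and in the latter it works directly with the pair $(G,X)$ where $X=G\cdot x_0\sqcup G/A\sqcup G/B$: \cref{onerelatorpair} (via Howie's identity theorem) gives $\cd_\Q(G,X)\le 2$, \cref{onerelatorflat} gives $b_2^{(2)}(G,X)=0$, and since $G$ is locally indicable it satisfies the strong Atiyah conjecture; now \cref{cohomologicalcoherence} with stabilisers $A,B$ yields homological coherence of $G$ over $\Q$. For the promotion to genuine coherence the paper does \emph{not} climb a hierarchy; instead it uses that the pair $(A*B,\,A*B/\langle w\rangle)$ has the Cohen--Lyndon property by Edjvet--Howie (\cref{one-relator_CL}), and then \cref{promotion} upgrades $\FP_2(\Q)$ subgroups of $\pi(\mathcal P)=G$ to finitely presented ones. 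The paper even remarks that a hierarchy approach in the spirit you outline can be made to work (using \cite{Li24}), but it chose the Cohen--Lyndon route precisely to avoid the bookkeeping you left open.
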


This theorem generalizes the case of one-relator groups, first proved for one-relator groups with torsion in \cite{LW20,Wi22a}, and in the general case in \cite{JL23}. When the word $w$ is a proper power, \cref{main} is due to Howie--Short \cite{HH23}. In this paper, we prove Theorem \ref{main} in the case where $w$ is not a proper power.  

Our proof of \cref{main} also applies to the case in which $w$ is a proper power, provided the one-relator product satisfies the weak Atiyah conjecture. We discuss the weak Atiyah conjecture in \cref{sec: wAc}. Let us only mention that linear groups over a field of characteristic~$0$ or virtually locally indicable groups satisfy the weak Atiyah conjecture. 

As in \cite{JL23}, we divide the proof of Theorem \ref{main} into two steps. Recall that a group is called {\bf homologically coherent over a ring $R$} if all its finitely generated subgroups are of type $\FP_2(R)$.  Our first step consists in proving that $A * B / \langle\!\langle w \rangle\!\rangle$ is homologically coherent over~$\Q$. For one-relator groups, this was shown in \cite{JL23} using the fact that the second $L^2$-Betti number of a one-relator group $G$ is trivial, and that $\mathrm{cd}_{\Q}(G) \leq 2$. In our setting, these two properties do not hold; instead, we replace them with analogous properties for group pairs. We refer the reader to Sections~\ref{sect:gp} and \ref{sect:L2}  for all relevant definitions.

\begin{teo}\label{cohomologicalcoherence}
Let $G$ be a group satisfying the weak Atiyah conjecture. Let $(G,X)$ be a group pair, and assume that:
\begin{enumerate}
    \item For every $x \in X$, the stabiliser $G_x$ is homologically coherent over~$\Q$,
    \item $\cd_{\Q}(G, X) \leq 2$ and 
    \item $b_2^{(2)}(G, X) = 0$.
\end{enumerate}
Then $G$ is homologically coherent over~$\Q$.
\end{teo}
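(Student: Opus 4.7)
The plan is to relativise the approach used for absolute one-relator groups in \cite{JL23}. Fix a finitely generated subgroup $H \leq G$; the aim is to show that $H$ is of type $\FP_2(\Q)$, which by definition establishes homological coherence of $G$ over $\Q$.

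Using $\cd_{\Q}(G,X) \leq 2$, the first step is to produce a short exact sequence of $\Q[G]$-modules
$$
0 \longrightarrow P_2 \longrightarrow P_1 \longrightarrow \Q[X] \longrightarrow \Q \longrightarrow 0,
$$
with $P_1$ and $P_2$ projective and $\Q[X]$ the permutation module. Restricting along $\Q[H] \hookrightarrow \Q[G]$ yields a length-two resolution of $\Q$ over $\Q[H]$ in which $\Q[X]$ decomposes as $\bigoplus_{Hx \in H\backslash X} \Q[H/H_x]$. Each stabiliser $H_x = H \cap G_x$ sits inside $G_x$; granted appropriate control on the $H$-orbit structure so that the relevant $H_x$ are finitely generated, hypothesis~(1) yields that each such $H_x$ is of type $\FP_2(\Q)$, and hence each summand $\Q[H/H_x]$ admits a partial free resolution of length two by finitely generated $\Q[H]$-modules.

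Splicing these local resolutions into the restricted length-two resolution produces a chain complex of $\Q[H]$-modules computing $\Q$ up to degree $2$, finitely generated in degrees $0$ and $1$. The only obstruction to finite generation in degree $2$ is then captured by a single $\Q[H]$-module $M$. Here the weak Atiyah conjecture for $G$ enters: it supplies a Sylvester rank function extending the $L^2$-trace, and the hypothesis $b_2^{(2)}(G,X) = 0$ forces the dimension of $M$ to vanish. A Sylvester rank argument along the lines of \cite{JL23} then upgrades $M$ to be finitely generated as a $\Q[H]$-module, completing the proof that $H$ is of type $\FP_2(\Q)$.

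The main obstacle will be the final Sylvester-rank step together with the auxiliary control on the stabilisers $H_x$. One must verify that the obstruction module $M$ left by the splicing is measured exactly by $b_2^{(2)}(G,X)$ under the weak-Atiyah dimension, and then extract genuine finite generation from the vanishing of this dimension. This is the relative analogue of the central technical lemma of \cite{JL23}; the new feature here is that one must keep track of the permutation module $\Q[X]$ and the induction--restriction interaction with the stabilisers throughout, so that the Sylvester-rank computation takes place in a setting compatible with the pair $(G,X)$.
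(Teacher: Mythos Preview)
There is a genuine gap in your logical order. You invoke hypothesis~(1) to conclude that each $H_x$ is of type $\FP_2(\Q)$, \emph{``granted appropriate control on the $H$-orbit structure so that the relevant $H_x$ are finitely generated''}. But that control is precisely what is missing, and your outline gives no mechanism to obtain it. Homological coherence of $G_x$ only says that \emph{finitely generated} subgroups of $G_x$ are of type $\FP_2(\Q)$; it tells you nothing about $H_x = H\cap G_x$ until you know $H_x$ is finitely generated. Since $X$ may have infinitely many $H$-orbits and each $H_x$ may a priori be infinitely generated, your splicing step cannot start, and the ``obstruction module $M$'' you describe is not even well-defined as a finitely-generated-in-low-degrees complex.

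In the paper the order of operations is reversed, and this reversal is the content of the argument. The weak Atiyah hypothesis and $b_2^{(2)}(G,X)=0$ are used \emph{first}: \cref{L2subgroup} transfers the vanishing to $H$, giving $\Tor_1^{\Q H}(\mathcal{R}_{\CC G},\omega_{\Q}(X))=0$. The Artinian-ring argument (your ``Sylvester-rank step'') lives in the relative $\FP_2$ criterion \cref{rel_hom_coherence} and its key \cref{fg_submodule}; applied to the subpair $(H,H\cdot x_0)\subset (H,X)$, it produces a subpair $(H,Y)\subset (H,X)$ of type $\FP_2(\Q)$ with $H\backslash Y$ finite, and \cref{fgstabilisers} then forces each $H_y$ ($y\in Y$) to be finitely generated. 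Only now does hypothesis~(1) enter, upgrading each $H_y$ to type $\FP_2(\Q)$, after which \cref{fp2_pair_to_fp2} promotes $\FP_2(\Q)$ from the pair $(H,Y)$ to the group $H$. In short, the rank/length argument is not the final step that certifies finite generation of an obstruction module; it is the preliminary step that cuts $X$ down to finitely many $H$-orbits with finitely generated stabilisers, without which hypothesis~(1) has no purchase.
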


In the case of a torsion-free one-relator product $G = A * B / \normal{w}$ of locally indicable groups $A$ and $B$, we apply \cref{cohomologicalcoherence} for $X = G/A \sqcup G/B$.

The second step in the proof of Theorem~\ref{main} consists of promoting homological coherence over~$\mathbb{Q}$ for $A * B / \langle\!\langle w \rangle\!\rangle$ to  coherence. For one-relator groups, this was achieved using the Magnus hierarchy. A variation of this method, incorporating results from \cite{Li24}, can also be applied in our context. However, we present an alternative approach based on the Cohen--Lyndon property of group pairs (see Section~\ref{sect:prom} for definitions and details). Since the group pair $(A * B, A * B / \langle w \rangle)$ satisfies the Cohen--Lyndon property by work of Edjvet--Howie \cite{EH87}, we conclude that the homological coherence over $\Q$ of the group $G$ implies its coherence. 

\begin{teo}\label{promotion}
Let $G$ be a coherent group, and let $\mathcal{P} = (G, X)$ be a group pair satisfying the Cohen--Lyndon property, such that for every $x \in X$, the stabiliser $ G_x$ is infinite cyclic. Then every subgroup of the quotient group
\[
G \big/ \left\langle G_x : x \in X \right\rangle
\]
that is of type $\mathrm{FP}_2(\mathbb{Q})$ is finitely presented.
\end{teo}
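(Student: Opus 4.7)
The plan is to use the Cohen--Lyndon property to reduce the problem to a subgroup question in a free group. By Cohen--Lyndon, the normal subgroup
\[
K := \normal{G_x : x \in X} \;\trianglelefteq\; G
\]
decomposes as a free product $K = \underset{(x,t)}{\ast}\, tG_xt^{-1}$ of infinite cyclic subgroups, so $K$ is a free group; in particular we obtain a short exact sequence $1 \to K \to G \to Q \to 1$ with $Q = G / K$.

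Given $H \leq Q$ of type $\FP_2(\Q)$, $H$ is in particular finitely generated, so I would lift finitely many of its generators to $\tilde H := \pi^{-1}(H) \leq G$ and let $H_0 \leq \tilde H$ be the subgroup they generate. Then $H_0/L_0 \cong H$ for $L_0 := H_0 \cap K$, and by coherence of $G$ the group $H_0$ is finitely presented. It therefore suffices to show that $L_0$ is normally generated in $H_0$ by finitely many elements. Combined with the finite presentability of $H_0$, the standard Bieri criterion makes the $\FP_2(\Q)$ hypothesis on $H$ equivalent to the relation module $L_0^{\mathrm{ab}} \otimes_\Z \Q$ being a finitely generated $\Q H$-module.

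The key structural input is that $L_0$ sits inside the free product of infinite cyclic groups $K$. Kurosh's theorem then provides a decomposition
\[
L_0 = F' \,\ast\, \underset{i \in I}{\ast}\, L_i,
\]
where $F'$ is a free group and each $L_i$ is conjugate in $K$ into one of the cyclic free factors, hence infinite cyclic. The conjugation action of $H_0$ on $L_0$ permutes these data up to Kurosh ambiguity. The finite generation of $L_0^{\mathrm{ab}} \otimes \Q$ as a $\Q H$-module should then force only finitely many $H_0$-orbits of non-trivial factors $L_i$, with a parallel control on the free part $F'$; choosing one element from each orbit, together with finitely many $H_0$-normal generators for $F'$, produces the desired finite normal generating set for $L_0$ in $H_0$.

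The principal obstacle is precisely this last step: converting the $\Q$-homological finiteness of the relation module into genuine group-theoretic finite normal generation of $L_0$, and in particular taming the free part $F'$ of the Kurosh decomposition, whose generators are not directly indexed by the Cohen--Lyndon data. A natural route is a Bass--Serre analysis of the $H_0$-action on the Bass--Serre tree of $K$, whose quotient graph-of-groups combinatorics is governed by $L_0^{\mathrm{ab}} \otimes \Q$; alternatively, one may enlarge the lift $H_0$ so that $F'$ becomes trivial, reducing the task to a finite-orbit count of cyclic Kurosh factors.
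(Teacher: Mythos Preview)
Your setup is correct and you have put your finger precisely on the obstacle: the free part $F'$ of the Kurosh decomposition of $L_0$ is not visibly controlled by the finite generation of $(L_0)_{\ab}\otimes\Q$, and your proposal stops exactly there. Your second suggested fix---enlarging the lift $H_0$---is the right idea and is essentially what the paper does; what is missing is the concrete mechanism for the enlargement and, more importantly, a reformulation that avoids having to analyse $F'$ directly.

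The paper proceeds as follows. Choose $l_1,\dots,l_n\in L_0$ whose images generate $(L_0)_{\ab}\otimes\Q$ over $\Q\overline H$. Each $l_j$ lies in the free product $K=N_{\mathcal P}$ of the stabilisers and therefore factors as $l_j=a_{j1}\cdots a_{jm_j}$ with each letter $a_{jk}$ lying in some stabiliser $G_{y_{jk}}$. Set $H=\langle H_0,\{a_{jk}\}\rangle$; since every $a_{jk}\in K$, the group $H$ still surjects onto $\overline H$, and one checks that $L:=H\cap K$ is normally generated in $H$ by $L_0\cup\{a_{jk}\}$, so that $L_{\ab}\otimes\Q$ is generated over $\Q\overline H$ by the images of the $a_{jk}$ alone. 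Now take $Y$ to consist of the finitely many $H$-orbits of the points $y_{jk}$ and form the subpair $\mathcal Q=(H,Y)$. Each $a_{jk}\in H_{y_{jk}}\le N_{\mathcal Q}$, so the residual kernel $\ker\pi_\kappa=L/N_{\mathcal Q}$ has \emph{trivial} $\Q$-abelianisation. At this point one applies Kurosh not to $L$ but to this quotient: the Cohen--Lyndon property guarantees that $\ker\pi_\kappa$ is again a free product of a free group and some infinite cyclic groups $H_x$ with $x\notin Y$, and every nontrivial free factor of such a product has infinite abelianisation; vanishing $\Q$-abelianisation therefore forces $\ker\pi_\kappa=1$. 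Hence $\overline H\cong\pi(\mathcal Q)=H/\normal{H_y:y\in Y}$ is a quotient of the finitely presented group $H$ (coherence of $G$) by the normal closure of finitely many cyclic subgroups, so is finitely presented. The free part $F'$ never needs to be tamed on its own: one arranges for the residual kernel to have zero $\Q$-abelianisation, and the Cohen--Lyndon structure then kills it outright.
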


During the proof of \cref{main}, we also show that intersections of finitely generated subgroups $H\leqslant A*B/\normal{w}$ with the factors $A$ and $B$ are themselves finitely generated. This is stated as \cref{teo:intersection}. We conjecture that the same property holds when $w$ is allowed to be a proper power, our only obstacle is that we do not know whether such groups satisfy the weak Atiyah conjecture. It would also be interesting to determine which conditions ensure that the number of double cosets $AgH$ with $H \cap A^g$ nontrivial is finite. Our techniques allow us to prove that if $H$ is of type $\mathrm{FP}_2(\mathbb{Q})$, then for all but finitely many double cosets $AgH$, the intersection $H \cap A^g$ is cyclic.

Wise conjectured \cite[Conjecture 7.4]{Wi20} that any extension of a coherent hyperbolic group with $\Z$ is coherent. As another application of \cref{cohomologicalcoherence}, we partially resolve his conjecture.

\begin{teo}
\label{hyperbolic_extension}
Let $G \cong H\rtimes\Z$ be a group satisfying the weak Atiyah conjecture. If $H$ is hyperbolic and (homologically) coherent (over $\Q$), then $G$ is (homologically) coherent (over $\Q$).
\end{teo}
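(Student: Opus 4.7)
The plan is to apply \cref{cohomologicalcoherence} to the group pair $(G, G/H)$ for the homological statement, and then to promote this to genuine coherence using that $H$ is both coherent and hyperbolic.

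For the homological part, observe that $(G, G/H)$ is transitive with stabiliser $H$, so condition~(1) of \cref{cohomologicalcoherence} follows from the hypothesis. For (2) and (3), the semidirect-product decomposition presents $G$ as the HNN extension of $H$ with both associated subgroups equal to $H$, so the Bass--Serre tree $T \cong \R$ is a one-dimensional contractible $G$-CW-complex with every vertex and edge stabiliser conjugate to $H$. This tree serves as a model for the relative classifying space of $(G, G/H)$, giving $\cd_\Q(G, G/H) \leq 1 \leq 2$ and $b_k^{(2)}(G, G/H) = 0$ for all $k \geq 2$; in particular, $b_2^{(2)}(G, G/H) = 0$. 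Applying \cref{cohomologicalcoherence} yields that $G$ is homologically coherent over $\Q$.

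For coherence, now assume $H$ is coherent and let $K \leq G$ be finitely generated. By the previous step, $K$ is of type $\FP_2(\Q)$. Let $\pi\colon G \to \Z$ denote the quotient. If $\pi(K) = 0$ then $K \leq H$ and $K$ is finitely presented by coherence of $H$. Otherwise, after choosing $t \in K$ mapping to a generator of $\pi(K)$, we may write $K = N \rtimes \langle t \rangle$ with $N = K \cap H$; the task then reduces to showing that $N$ is finitely generated, for then coherence of $H$ makes $N$ finitely presented and $K$, being an extension of a finitely presented group by $\Z$, is itself finitely presented.

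Establishing finite generation of $N$ is the main obstacle. From $K$ being of type $\FP_2(\Q)$ and the split extension $1 \to N \to K \to \Z \to 1$, a Bieri--Strebel analysis gives that $H_1(N;\Q)$ is finitely generated as a $\Q[t^{\pm 1}]$-module; the task is to upgrade this to finite generation of $N$ itself. Here the hyperbolicity of $H$ (beyond mere coherence) is essential: it restricts the structure of $\langle t \rangle$-invariant subgroups enough to rule out the ascending-union pathologies that otherwise obstruct finite generation in group extensions, in the spirit of the Feighn--Handel coherence theorem for free-by-cyclic groups. We expect to close this gap using the subgroup structure of hyperbolic-by-cyclic groups, for instance via relative hyperbolicity of the mapping torus or train-track techniques applied to the monodromy automorphism.
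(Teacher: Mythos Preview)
Your approach has a fatal error in the computation of $\cd_{\Q}(G, G/H)$. The Bass--Serre tree for the HNN extension $G = H\rtimes_\psi\Z$ is indeed a line, but its edge stabilisers are conjugates of $H$, not finite groups, so \cref{cd=1} does not apply. In fact one can compute $\omega_\Q(G/H)$ directly: since $G/H\cong\Z$ as a $G$-set (via the quotient map), $\omega_\Q(G/H)$ is the augmentation ideal $I_{\Q\Z}$ pulled back along $G\to\Z$, which is a free $\Q\Z$-module of rank one and hence isomorphic to $\Q[G/H]\cong \Q G\otimes_{\Q H}\Q$ as a $\Q G$-module. Therefore
\[
\cd_\Q(G, G/H) \;=\; \prd_{\Q G}\bigl(\Q G\otimes_{\Q H}\Q\bigr) + 1 \;=\; \cd_\Q(H) + 1,
\]
which exceeds $2$ as soon as $\cd_\Q(H)\geq 2$ (e.g.\ $H$ a closed surface group). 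So condition~(2) of \cref{cohomologicalcoherence} fails for the pair $(G, G/H)$ in general, and your argument only goes through when $H$ is virtually free.

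The paper repairs this by choosing a finer $G$-set. One first applies Dunwoody accessibility to write $H$ as a finite graph of groups with \emph{finite} edge groups and one-ended vertex groups $H_v$, and takes $X$ to be the $G$-set of conjugates of the $H_v$. Because the edge groups are finite, \cref{cd=1} gives $\cd_\Q(H,X)\le 1$; the HNN structure then yields $\cd_\Q(G,X)\le 2$ and $b_2^{(2)}(G,X)=0$ via a graph-of-group-pairs Mayer--Vietoris argument (\cref{suspensions_cd2}). The stabilisers $G_x$ are now of the form $H_v\rtimes\Z$ with $H_v$ one-ended hyperbolic, and their (homological) coherence is handled separately using the JSJ decomposition (\cref{one_ended_extension}). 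Finally, the promotion from homological coherence to genuine coherence does not require the Bieri--Strebel/train-track argument you sketch: once $G$ is homologically coherent over $\Q$ and $H$ is coherent, one invokes \cite[Theorem~1.3]{JL23} directly.
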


We remark that the weak Atiyah conjecture (over $\mathbb{C}$) is closed under infinite cyclic extensions by \cite[Lemma 4.1]{SP_Atiyah}. In particular, \cref{hyperbolic_extension} implies that if $H$ is a hyperbolic and virtually locally indicable coherent group, then $H\rtimes \Z$ is coherent.

\subsection{Coherence of group algebras}
Recall that a ring is called \textbf{coherent} if all its finitely generated left ideals are finitely presented. In \cite{JL23}, it was proven that if $K$ is a field of characteristic 0, the group algebra $KG$ is coherent for one-relator groups $G$. Here, we extend this result to certain one-relator products.

\begin{teo}\label{teo:coherencegroupalgebras}
    Let $K$ be a field of characteristic 0, $A$ and $B$   two locally indicable groups, and   $w \in A * B$   an element that is neither conjugated to an element in $A$ or $B$, nor a proper power. Let $G = A * B / \normal{w}$. Assume that $KA$ and $KB$ are coherent. Then $KG$ is coherent.
\end{teo}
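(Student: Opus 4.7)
The plan is to adapt the proof strategy from \cite{JL23}, which establishes coherence of $KG$ in the one-relator case, to the one-relator product setting by using the group pair machinery developed in this paper.

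First I would verify that $G = A*B/\normal{w}$ is locally indicable and torsion-free. Since $A$ and $B$ are locally indicable and $w$ is not a proper power, this follows from a result of Howie on one-relator products. Consequently, $KG$ embeds into a Hughes--free division ring $\mathcal{D}_{KG}$ by Jaikin-Zapirain, and $G$ satisfies the strong Atiyah conjecture over $K$. In particular, the hypotheses of \cref{main} and \cref{cohomologicalcoherence} are satisfied (with $K$-coefficients in place of $\Q$) for the group pair $(G, G/A \sqcup G/B)$, giving that $G$ is coherent and homologically coherent over~$K$.

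Next, to prove coherence of $KG$, I would show that every finitely generated left ideal $J \leq KG$ is finitely presented. Choose a surjection $\phi\colon KG^n \twoheadrightarrow J$ with kernel $N$; the task is to exhibit $N$ as finitely generated. My approach combines three ingredients. The strong Atiyah conjecture provides a finite $\mathcal{D}_{KG}$-rank for $N$, namely $n - \dim_{\mathcal{D}_{KG}}(J\otimes_{KG}\mathcal{D}_{KG})$, which gives an a priori bound on the ``generic'' part of $N$. Homological coherence of $G$ over $K$ yields finite presentability of restrictions of $J$ to any finitely generated subgroup $H \leq G$. Finally, coherence of the stabiliser algebras $KA^g$ and $KB^g$ (transported from the hypothesis on $KA$ and $KB$) gives the base case for an inductive argument along the Cohen--Lyndon hierarchy of the pair $(A*B, A*B/\langle w \rangle)$.

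The main obstacle, as in \cite{JL23}, will be the inductive step: showing that coherence of group algebras is preserved by the operations that build $KG$ from $KA$ and $KB$ along the Cohen--Lyndon tower (iterated HNN extensions or amalgamations over subgroups of conjugates of $A$ and $B$ and of $\langle w \rangle$). Here the Atiyah conjecture and the Hughes--free embedding play a decisive role by supplying the torsion-freeness and rank-matching conditions required by ring-theoretic criteria for coherence preservation of Dicks--Sontag / Schofield type. Once each inductive step is in hand, finite generation of $N$ follows, and $KG$ is coherent.
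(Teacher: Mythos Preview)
Your proposal has a genuine gap: there is no ``Cohen--Lyndon hierarchy'' or tower of iterated HNN-extensions/amalgamations available here. The Cohen--Lyndon property for the pair $(A*B, A*B/\langle w\rangle)$ only says that $\normal{w}$ is a free product of conjugates of $\langle w\rangle$; it does not provide a filtration of $G$ by simpler groups along which one could run an induction. The Magnus--Moldavanski\u{\i} hierarchy used in \cite{JL23} exists because one-relator groups are built from free groups, but for a one-relator product of arbitrary locally indicable $A$ and $B$ there is no analogous tower, so the inductive step you describe has no scaffolding to climb. The appeal to Dicks--Sontag/Schofield-type preservation criteria is left unspecified and would require edge groups and compatibility conditions you have not identified.

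The paper takes a completely different, non-inductive route. It works directly with the group pair $(G,X)$ for $X = G\cdot x_0 \sqcup G/A \sqcup G/B$ and proves a general criterion (\cref{coherencegroupalgebraspais}): if $\cd_K(G,X)\le 2$, $KG$ embeds in an Artinian ring $\mathcal D$, the right module $\mathcal D\otimes_K\omega_K(X)^{op}$ is flat, and each $KG_x$ is coherent, then $KG$ is coherent. The key structural input is \cref{pro: standard_modules}, which shows that any submodule $I$ of a free $KG$-module fits into $0\to Q\to \bigoplus_t {}^GI_t \to I\to 0$ with $Q$ projective and $I_t$ a $KG_t$-submodule of a free module; the flatness hypothesis (supplied by \cref{onerelatorflat}, using that $\omega_K(X)$ is one-relator) then lets one invoke \cref{fg_submodule} to trap the projective kernel inside a finitely generated piece, after which coherence of $KA$ and $KB$ finishes the job. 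No hierarchy, no induction on a tower: the coherence of the factor algebras is used in a single step.
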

It seems reasonable to conjecture that the same conclusion holds when $w$ is a proper power in $A * B$. 
This occurs, for example, in the case of one-relator groups with torsion. 
The only obstacle for our proof of \cref{teo:coherencegroupalgebras} to work for general one-relator products of locally indicable groups 
is that the weak Atiyah conjecture is not known for them.

\subsection{\texorpdfstring{$K_0$}{K0} of group algebras}
Let $S$ be a ring with unit, {\bf the projective class group} $K_0(S)$ is the free abelian group on finitely generated projective left $S$-modules modulo the relation $[P] = [P_1] + [P_2]$ if there is a short exact sequence of left $S$-modules $0\rightarrow P_1 \rightarrow P \rightarrow P_2 \rightarrow 0$. Here $[P]$ denotes the element of $K_0(S)$ corresponding to the projective left $R$-module $P$.

A commutative   ring $R$ is called {\bf   regular} if it is  Noetherian and all its left $R$-modules are of type $\mathtt{FP}$. A central open question in the $K_0$ theory of group rings is the Farrell-Jones Conjecture for $K_0(RG )$ for a torsion-free group $G$ and a regular ring $R$.

\begin{Conj}\label{conj: farrell-jones}
    Let $G$ be a torsion-free group and let $R$ be a   regular ring. Then the map
    \[
        K_0(R) \longrightarrow K_0(RG )
    \]
    induced by the inclusion $R \rightarrow RG $ is an isomorphism.
\end{Conj}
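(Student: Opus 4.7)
Since \cref{conj: farrell-jones} is a well-known open problem in full generality, I will outline how to establish it for the class announced in the abstract, namely groups admitting a Cohen--Lyndon presentation, which is evidently what the paper actually proves. Let $(F, X)$ be a group pair satisfying the Cohen--Lyndon property with infinite cyclic point stabilisers, write $N = \normal{F_x : x \in X}$, and let $G = F/N$; the group $G$ is torsion-free whenever $F$ is.

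The first step is to extract from the Cohen--Lyndon property a short finite free resolution of $R$ over $RG$. By definition, $N$ is freely generated by a prescribed family of conjugates of generators of the stabilisers $F_x$, so that its abelianisation, viewed as a $\Z G$-module, is an induced module of the shape $\bigoplus_{[x] \in G\backslash X} \Z G \otimes_{\Z \langle F_x\rangle} \Z$. Splicing this relation module with a free resolution of $R$ over $RF$ (for example the Cayley complex of $F$ when $F$ is free) and tensoring up to $RG$ yields a finite free resolution
\[
0 \longrightarrow \bigoplus_{[x] \in G\backslash X} RG \otimes_{R\langle F_x\rangle} R \longrightarrow (RG)^{\oplus I} \longrightarrow RG \longrightarrow R \longrightarrow 0.
\]
In particular $RG$ has finite global dimension whenever $R$ is regular.

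The second step is to compute $K_0(RG)$ using this resolution together with Waldhausen's long exact sequence in $K$-theory for the tree-of-groups decomposition of $G$ inherited from the Bass--Serre tree of $F$. Each stabiliser is infinite cyclic, and for regular $R$ the Bass--Heller--Swan theorem gives $K_0(R[\Z]) \cong K_0(R)$ together with vanishing of the corresponding $\mathrm{Nil}$-groups, so all cyclic contributions collapse to $K_0(R)$. Combined with induction on the complexity of the Cohen--Lyndon hierarchy, starting from the free base case where $K_0(RF) \cong K_0(R)$ is classical, this yields the required isomorphism; injectivity follows from the augmentation $RG \to R$ splitting the inclusion $R \to RG$ at the level of $K_0$.

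The main obstacle I anticipate is the uniform vanishing of the Waldhausen $\mathrm{Nil}$-contributions across the hierarchy: although $\mathrm{Nil}$ vanishes for $R[\Z]$ over a regular ring, the iterated splittings produced by the Cohen--Lyndon presentation must be shown to be compatible with these vanishings at each level, which may force one to work with a refined twisted or crossed-product variant of the long exact sequence. A secondary issue is that the argument is cleanest when $F$ is free; for more general ambient groups such as the one-relator products of \cref{main}, one would need the inductive hypothesis that the factors themselves already satisfy \cref{conj: farrell-jones}, paralleling the coherence assumptions imposed in \cref{teo:coherencegroupalgebras}.
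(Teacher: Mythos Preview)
The statement is a conjecture and the paper does not prove it; what is proved is \cref{teoK_0_intro} (more generally \cref{teoK_0}), asserting only \emph{surjectivity} of $K_0(RG)\to K_0(R[\pi(\mathcal{P})])$, under the standing hypotheses that $RG$ is coherent and $\cd_R(G)<\infty$. You correctly note that, combined with Gersten's result for free $G$ and the augmentation splitting for injectivity, this yields \cref{conj: farrell-jones} for Cohen--Lyndon quotients of free groups, but the paper itself stops at surjectivity.

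Your second step has a genuine gap, more basic than the $\mathrm{Nil}$-term issue you flag. There is no ``tree-of-groups decomposition of $G$ inherited from the Bass--Serre tree of $F$'': the Cohen--Lyndon property gives a free-product decomposition of the \emph{kernel} $N$, not a graph-of-groups splitting of the \emph{quotient} $G=F/N$. A generic one-relator group, for instance, need not split nontrivially at all, so Waldhausen's sequence is simply unavailable for $G$. Nor is there any ``Cohen--Lyndon hierarchy'' to induct on; you may be thinking of the Magnus hierarchy, which is a different structure and plays no role in the paper's $K_0$ argument.

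The paper's route is entirely different and more elementary. Working in $G_0\cong K_0$ (\cref{K0G0}), given an $R[\pi(\mathcal{P})]$-module $\overline{M}$ of type $\FP$ one lifts it to a finitely presented $RG$-module $M$ and takes a finite projective resolution of $M$ over $RG$ (this is where coherence of $RG$ and $\cd_R(G)<\infty$ enter, via \cref{critfinitepd}). The Cohen--Lyndon property is used only to identify $\Tor_1^{RG}(R[\pi(\mathcal{P})],M)$ with a direct sum of modules induced from the groups $N_G(G_x)/G_x$ (\cref{maindiagram} and \cref{tor1}); a downward induction along the resolution then shows that $[\overline{M}]$ lies in the claimed image. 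No Waldhausen or $\mathrm{Nil}$-group machinery is involved.
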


One of the first classes of groups that were known to satisfy \cref{conj: farrell-jones} was the class of free groups, a result due to Gersten \cite{GerstenK0freeprod}. Nowadays our knowledge of \cref{conj: farrell-jones} is much larger, we know, for instance, that it holds for hyperbolic groups \cite{BLR_FJhyp}. We refer the reader to \cite{LuckKLtheory} for the current status of the conjecture.

The Farrell-Jones Conjecture for $K_0(RG )$ is related to  several other  conjectures in the theory of group rings: the Kaplansky Idempotent Conjecture \cite[Theorem 1.12]{BLR_FJapplications}, the Weak  and Strong Bass conjectures \cite [Conjectures 4.4 and 4.5]{BassConjectures} and  the Base Ring Conjecture \cite[Conjecture 85]{LR_BaumConnes}.

We want to highlight an important topological implication. A connected topological space $X$ is {\bf finitely dominated} if there exists a finite CW-complex $Y$ and maps $i:X\rightarrow Y$ and $r: Y \rightarrow X$ so that $r\circ i$ is homotopic to the identity. The following result is known as Wall's obstruction.

\begin{teo}[{\cite[Theorem F]{WallObstruction}}]
    Let $X$ be finitely dominated, and suppose that \cref{conj: farrell-jones} holds for $\Z[\pi_1(X,x)]$. Then $X$ is homotopy equivalent to a finite CW-complex.
\end{teo}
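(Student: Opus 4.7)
The plan is to invoke Wall's finiteness obstruction $\sigma(X) \in \tilde K_0(\Z[\pi_1(X,x)])$ and observe that the Farrell--Jones Conjecture for $K_0$ forces this group to be trivial, so the obstruction automatically vanishes.

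First, I would construct the Wall obstruction. Since $X$ is finitely dominated, the cellular chain complex $C_*(\tilde X)$ of the universal cover is chain homotopy equivalent, as a $\Z\pi$-complex (where $\pi = \pi_1(X,x)$), to a bounded complex $P_*$ of finitely generated projective $\Z\pi$-modules: a domination $Y \to X \to Y$ by a finite CW-complex $Y$ exhibits $C_*(\tilde X)$ as a homotopy retract of the finitely generated free complex $C_*(\tilde Y)$, and a bounded projective resolution of a homotopy retract of a finitely generated free complex is itself (chain homotopy equivalent to) a bounded complex of finitely generated projectives. I would then set
\[
\sigma(X) = \sum_{i} (-1)^i [P_i] \in \tilde K_0(\Z\pi),
\]
and verify, by standard cancellation arguments, that this class is independent of the chosen projective model.

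The main step, and the main technical obstacle, is the converse direction of Wall's theorem: if $\sigma(X) = 0$, then $X$ is homotopy equivalent to a finite CW-complex. Here I would carry out (or cite from \cite{WallObstruction}) the cell-trading argument that replaces $X$, up to homotopy equivalence, with a CW-complex whose skeleta below some large dimension $n$ are already finite, and whose top chain module is a single finitely generated projective $P$ representing $\sigma(X)$. The hypothesis $\sigma(X) = 0$ lets one stabilise by finitely many free cells so that $P$ becomes free, and one then finishes by choosing a finite free basis and attaching finitely many cells to build a finite CW model. This inductive cell-by-cell construction, and the careful tracking of what happens to the obstruction under stabilisation, is the technical heart of the proof.

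Finally, the Farrell--Jones Conjecture for $K_0$ applied to $R = \Z$ and $G = \pi_1(X,x)$ asserts that the inclusion-induced map $K_0(\Z) \to K_0(\Z\pi)$ is an isomorphism. Since $K_0(\Z) \cong \Z$ is generated by the class of the free module $\Z$, this is equivalent to $\tilde K_0(\Z\pi) = 0$. Hence $\sigma(X) = 0$, and by the previous step $X$ is homotopy equivalent to a finite CW-complex.
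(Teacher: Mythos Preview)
Your sketch is correct and is exactly the standard argument: Wall's Theorem~F gives the obstruction $\sigma(X)\in\widetilde{K}_0(\Z\pi)$ whose vanishing is equivalent to $X$ being homotopy equivalent to a finite complex, and the Farrell--Jones hypothesis forces $\widetilde{K}_0(\Z\pi)=0$. Note that the paper does not supply its own proof of this statement; it simply cites \cite[Theorem~F]{WallObstruction} as background, so there is nothing further to compare against.
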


In this paper, we prove the following result, which implies \cref{conj: farrell-jones} for torsion-free groups with a presentation satisfying the Cohen--Lyndon property. Interestingly, Arenas and Duda showed in \cite{ArCL} that some non-metric small cancellation groups like $C(6)$, $C(4) - T (4)$, and $C(3) - T (6)$ admit a Cohen-Lyndon presentation, thus providing non-hyperbolic examples. Our result describes the projective class group for these groups.

\begin{teo}     
    \label{teoK_0_intro}
Let $\mathcal{P} = (G, X)$ be a group pair satisfying the Cohen--Lyndon property such that, for every $x \in X$, the stabiliser $G_x$ coincides with its normalizer in $G$.    
    Let $R$ be a regular ring, and assume that $\cd_R(G) < \infty$ and that the group ring $RG$ is coherent.  
    Then the natural map
    \[
        K_0(RG)  
        \longrightarrow K_0\left(R\left [
G \big/ \left\langle G_x : x \in X \right\rangle
\right ]\right) \] 
    is surjective.
\end{teo}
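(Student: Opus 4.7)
The plan is to show that every finitely generated projective $R\bar{G}$-module $Q$ admits a finite resolution by finitely generated projective $RG$-modules, and then to realize $[Q]$ as the image of the corresponding Euler characteristic class in $K_0(RG)$.

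The first step is to translate the Cohen--Lyndon property into a concrete resolution. By that property, $N = \langle G_x : x \in X \rangle$ is a free product of a canonical family of conjugates of the stabilizers; the self-normalizing condition $N_G(G_x) = G_x$ ensures that distinct conjugates $gG_xg^{-1}$ correspond bijectively to cosets in $G/G_x$, so the decomposition is unambiguous. The Mayer--Vietoris chain complex for the free product, inducted from $RN$ to $RG$ (using that $RG$ is free as an $RN$-module), produces a finite-length complex of $RG$-modules resolving $R\bar{G}$, whose terms are built from modules induced from the stabilizers $G_x$, with augmentation ideals $I_{RG_x}$ as the essential ingredient.

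The second step promotes this to a resolution by finitely generated projective $RG$-modules. Since $\cd_R(G_x) \le \cd_R(G) < \infty$ (subgroups of $G$ inherit finite cohomological dimension over a regular base, since $RG$ is free over $RG_x$), each $I_{RG_x}$ has finite projective dimension over $RG_x$. Coherence of $RG$ then propagates finite presentation through the induction and splicing, so $R\bar{G}$ is finitely presented over $RG$ and of finite projective dimension. Applying the same machinery to $Q$, which is a direct summand of $(R\bar{G})^n$, one obtains a finite resolution $P_\bullet \to Q$ by f.g.\ projective $RG$-modules, and sets
\[
\chi(Q) \;=\; \sum_i (-1)^i [P_i] \;\in\; K_0(RG).
\]

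The final step is verification. The image of $\chi(Q)$ in $K_0(R\bar{G})$ equals the Euler characteristic of the tensored complex $R\bar{G} \otimes_{RG} P_\bullet$, whose homology is $\Tor^{RG}_*(R\bar{G}, Q) \cong H_*(N; Q)$ by Shapiro's lemma, and decomposes further as $\bigoplus_{[x]} H_*(G_x; Q)$ in positive degrees via the free-product structure. The main obstacle is precisely here: these higher Tor terms are generally nonzero, so $\chi(Q)$ and $[Q]$ differ in $K_0(R\bar{G})$ by an alternating sum of correction classes $[H_i(G_x; Q)]$. One must show that each such correction itself lies in the image of the natural map, typically by a recursive application of the construction to the $R\bar{G}$-modules $H_i(G_x; Q)$, using the finiteness of $\cd_R(G)$ to bound the depth of the recursion and coherence to keep all modules finitely presented. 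This is the technical heart of the argument, and it is where the combined hypotheses of Cohen--Lyndon structure, self-normalizing stabilizers, coherence of $RG$, and finite $\cd_R(G)$ jointly come into play.
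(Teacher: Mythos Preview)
Your overall architecture—lift $Q$ to an $RG$-module, take a finite projective $RG$-resolution (available because $RG$ is coherent and $\prd_{RG}(Q)<\infty$ by \cref{critfinitepd}), push the Euler characteristic forward, and then account for the higher $\Tor^{RG}_*(R\bar G,Q)$—is exactly the shape of the paper's argument. But the step you flag as ``the technical heart'' is genuinely where the content lies, and the way you propose to handle it has two real gaps.

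First, your recursion is not well-founded. You propose to apply the same construction to the correction modules $H_i(G_x;Q)$, using $\cd_R(G)<\infty$ to ``bound the depth.'' But each pass produces new correction terms again sitting in degrees $1,\dots,\cd_R(G)$; there is no decreasing invariant. Moreover, you have not shown that the corrections are even of type $\FP$ over $R\bar G$, which is needed for the Euler-characteristic identity $\sum(-1)^i[\bar P_i]=\sum(-1)^i[H_i]$ to make sense in $G_0(R\bar G)\cong K_0(R\bar G)$. Nothing in the hypotheses says the stabilisers $G_x$ are of type $\FP_\infty$ over $R$, so $H_i(G_x;R)$ need not be finitely generated, and your direct computation of $H_i(G_x;Q)$ does not go through.

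Second, and more subtly, the decomposition of $\Tor_1^{RG}(R\bar G,M)$ in terms of stabiliser homology is not automatic. The paper introduces the auxiliary $G$-set $Y=\St_{\mathcal P}$ and a comparison map $\alpha_M\colon\Tor_1^{RG}(R\bar G, R[Y]\otimes_R M)\to\Tor_1^{RG}(R\bar G,M)$; the Cohen--Lyndon property makes $\alpha_M$ injective, but surjectivity only holds when $M$ embeds in a free module, which fails for $M=Q$ itself. The paper fixes this with the notion of a \emph{complete lifting} (their Lemma on existence of complete liftings) and then runs a \emph{finite} inverse induction on the syzygies $M_i=\operatorname{im}(L_i\to L_{i-1})$, not a recursion on the corrections. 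At each step the four-term exact sequence
\[
0\to \bigoplus_{t} R\bar G\otimes_{R}\Tor_1^{RN_t}(R,M_i)\to \overline{M_{i+1}}\to R\bar G\otimes_{RG}L_i\to \overline{M_i}\to 0
\]
(using the self-normalising hypothesis so that $G_t/N_t=1$) shows simultaneously that $\overline{M_{i+1}}$ is of type $\FP$ and that the $R$-modules $\Tor_1^{RN_t}(R,M_i)$ are of type $\FP_1$, hence $\FP$ since $R$ is regular; one also reads off that only finitely many $t$ contribute. The correction classes are therefore induced from $K_0(R)$ and land in the image of $K_0(RG)$ immediately—no recursion is needed. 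This bootstrapping from coherence of $RG$ (which gives $\overline{M_{i+1}}$ of type $\FP_1$) to finite generation of the stabiliser Tor-pieces is exactly the missing idea in your sketch.
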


The paper is organised as follows. In \cref{sec: prelim} we explain the preliminary results used in the paper. In particular, we introduce the weak Atiyah conjecture. In \cref{sect:gp} we define the notion of group pairs and study the relationship between finiteness properties of the group and its stabilisers. In \cref{sect:L2} we introduce $L^2$-Betti numbers of group pairs and show \cref{cohomologicalcoherence}. We also prove in \cref{teo:intersection} that intersections of finitely generated subgroups $H\leqslant A*B/\normal{w}$ with the locally indicable factors $A$ and $B$ are themselves finitely generated. \cref{sect:prom} is devoted to the proof of \cref{promotion}. As an application we prove \cref{main}. We finish this section with a module theoretic reinterpretation of the Cohen--Lyndon property. \cref{sec:extensions} introduces the notion of a graph of group pairs and develops a tool to prove group coherence of infinite cyclic extensions by looking at its sub-extensions of maximal one-ended subgroups. In particular, we establish \cref{hyperbolic_extension}. In \cref{sec: coh_group_alg} we study modules over group pairs and show in \cref{coherencegroupalgebraspais} coherence of group algebras associated to certain group pairs whose stabilisers have coherent group algebras; this shows \cref{teo:coherencegroupalgebras}. We also pose some conjectures on the coherence of group algebras of certain infinite cyclic extensions. We finish the paper with \cref{sec: FJ_conj} where we prove \cref{teoK_0_intro}.

\subsection*{Acknowledgments}

We would like to thank Peter Kropholler  for useful discussion  on modules of finite projective dimension. 

The authors would like to thank the Isaac Newton Institute for Mathematical Sciences, Cambridge, for support and hospitality during the programme Operators, Graphs, Groups, where part of work on this paper was undertaken. This work was partially supported by EPSRC grant EP/Z000580/1 and the grant \seqsplit{PID2020-114032GB-I00/AEI/10.13039/501100011033} of the Ministry of Science and Innovation of Spain. The first author  was partially supported by a grant from the Simons Foundation. The second author was supported by the grant 202450E223 (Impulso de líneas científicas estratégicas de ICMAT).

\section{Preliminaries} \label{sec: prelim}

\subsection{General notation} 
All rings considered in this paper are assumed to be unitary, and all modules are left modules unless stated otherwise. We reserve the letter $R$ for commutative rings and $K$ for fields.

If $S$ is a ring, the {\bf length} of an $S$-module $M$ is the supremum of the lengths of chains of submodules, denoted by $\length_S(M)$. In the case of a module over a division ring, the length coincides with its dimension. Over an Artinian ring, every finitely generated module has finite length.

Let  $G$ be a group and let $M_1$ and $M_2$ be two $RG$-modules. Then $M_1\otimes_R  M_2$ is an $RG$-module, where the action of elements of $G$ is defined as $$g\cdot (m_1\otimes m_2)=(g\cdot m_1)\otimes (g\cdot m_2), \quad \text{for all } m_1 \in M_1, m_2 \in M_2, \ g \in G.$$
If  $H$ is a subgroup of $G$ and $L$ is an $RH$-module, we denote by ${}^GL$ the induced $RG$-module $RG\otimes_{RH} L$.

If $M$ is a left $RG$-module, we define $M^{op}$ to be the {\bf opposite} right $RG$-module: $M^{op} = M$ as a set, and the action is given by
\[
m \cdot g := g^{-1} m, \quad \text{for all } m \in M, \ g \in G.
\]

If a group $G$ acts on a set $X$ and $x \in X$, then we denote by $G_x$ 
the stabiliser of $x$ in $G$.

If $G$ is a group and $g, h\in G$, our convention for conjugation of $g$ by $h$ will be to write $g^h := h^{-1}gh$.

\subsection{Finiteness Conditions on Modules}
Let $S$ be a ring and $M$ an $S$-module. We say that $M$ is of \textbf{type $\FP_k$} ($k \geq 0$) if there exists an exact sequence of finitely generated projective $S$-modules
\begin{equation}\label{projres}
P_k \to \cdots \to P_1 \to P_0 \to M \to 0.
\end{equation}
We say that $M$ is of \textbf{type $\FP_\infty$} if it is of type $\FP_k$ for all $k \geq 0$, and that $M$ is of \textbf{type $\FP$} if it is of type $\FP_{\infty}$ and the projective dimension $\operatorname{pd}_S(M)$ is finite; that is, $M$ admits a resolution of the form \eqref{projres} for $k = \prd_S(M)$ with trivial first kernel. Note that if $S$ is coherent, then any   $S$-module of type $\FP_1$ is of type $\FP_\infty$.  

To study the group $K_0(S)$, we introduce another group, denoted by $G_0(S)$. Given a short exact sequence of $S$-modules
\[
0 \to M_1 \to M_2 \to M_3 \to 0,
\]
it follows from \cite[Proposition 1.4 and Proposition 4.1b]{Bieri_Notes} that if two of the modules $M_1$, $M_2$ or $M_3$ are of type $\FP$, then so is the third one.

The group $G_0(S)$ is the free abelian group generated by symbols $[P]$, where $P$ is an $S$-module of type $\FP$, subject to the relations
\[
[P_2] = [P_1] + [P_3]
\]
whenever there is a short exact sequence
\[
0 \to P_1 \to P_2 \to P_3 \to 0.
\]

The following lemma is well known (see, for example, \cite[Theorem 3.1.13]{Rosenberg_Kbook}):

\begin{lem}\label{K0G0}
Let $S$ be a ring. Then the natural map
\[
\kappa_S: K_0(S) \to G_0(S), \quad [P] \mapsto [P],
\]
is an isomorphism of groups.
\end{lem}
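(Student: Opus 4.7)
The plan is to construct an explicit inverse $\phi_S \colon G_0(S) \to K_0(S)$ and verify that it is two-sided. Given an $S$-module $M$ of type $\FP$, fix a finite resolution
\[
0 \to P_n \to \cdots \to P_1 \to P_0 \to M \to 0
\]
by finitely generated projective $S$-modules, which exists by the definition of type $\FP$. I would set
\[
\phi_S([M]) := \sum_{i=0}^{n} (-1)^i [P_i] \in K_0(S).
\]
The first task is to check that this assignment does not depend on the chosen resolution, and the second is to verify that it descends to the quotient defining $G_0(S)$.

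Independence of the resolution follows from (a generalised form of) Schanuel's lemma: given two finite projective resolutions of $M$, the comparison theorem for projective resolutions together with an induction on their length reduces the identity $\sum (-1)^i [P_i] = \sum (-1)^i [Q_i]$ in $K_0(S)$ to the classical Schanuel equality for the syzygies. To show that $\phi_S$ respects the defining relations of $G_0(S)$, I would take an arbitrary short exact sequence $0 \to M_1 \to M_2 \to M_3 \to 0$ of modules of type $\FP$ and apply the horseshoe lemma to produce a compatible short exact sequence of finite projective resolutions; the alternating-sum identity $\phi_S([M_2]) = \phi_S([M_1]) + \phi_S([M_3])$ in $K_0(S)$ then drops out termwise.

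To verify that $\phi_S$ is inverse to $\kappa_S$: the equality $\phi_S \circ \kappa_S = \mathrm{id}_{K_0(S)}$ is immediate because a finitely generated projective $P$ is its own length-zero resolution. For the reverse equality, fix a resolution of $M$ as above and split it into short exact sequences
\[
0 \to K_i \to P_{i-1} \to K_{i-1} \to 0,
\]
with $K_{-1} = M$ and $K_n = P_n$. The two-out-of-three property of type $\FP$ recalled in the excerpt guarantees that every syzygy $K_i$ is again of type $\FP$, so these sequences produce the identities $[P_{i-1}] = [K_i] + [K_{i-1}]$ in $G_0(S)$, and a telescoping sum yields $[M] = \sum_i (-1)^i [P_i]$ in $G_0(S)$, which is precisely $\kappa_S(\phi_S([M]))$.

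The hard part is really the well-definedness of $\phi_S$, not the inverse property, because the latter is a routine telescoping once the former is in place. The key inputs are Schanuel's lemma (for independence of the chosen resolution) and the horseshoe lemma (for additivity on short exact sequences); the two-out-of-three property of type $\FP$ plays the essential bookkeeping role of ensuring that every syzygy encountered in these arguments remains in the class on which $\phi_S$ is defined.
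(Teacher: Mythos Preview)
Your proof is correct and is the standard argument; indeed this is essentially how the result is proved in Rosenberg's book, which the paper cites. The paper itself does not give a proof of this lemma, treating it as well known and deferring to \cite[Theorem 3.1.13]{Rosenberg_Kbook}.
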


Finally, we will use the following lemma (see, for example, \cite{PK25}):

\begin{lem}\label{critfinitepd} 
Let $R$ be a regular ring, and let $G$ be a group with $\cd_R(G)<\infty$. Then for any $RG$-module $M$ it holds that $$\prd_{RG}(M)\le \cd_{R}(G)+\prd_R(M).$$
\end{lem}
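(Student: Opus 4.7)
The plan is to reduce the computation of $\prd_{RG}(M)$ to the projective dimension of $M$ over $R$ by combining a finite length projective resolution of the trivial module $R$ over $RG$ with the standard diagonal-tensor-product/induction trick. I will treat the nontrivial case where both $c := \cd_R(G)$ and $n := \prd_R(M)$ are finite; otherwise the bound is vacuous.

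First I would pick a projective resolution
\[
0 \to P_c \to P_{c-1} \to \cdots \to P_0 \to R \to 0
\]
of the trivial $RG$-module $R$ of length $c = \cd_R(G)$. Since $RG$ is free as an $R$-module, each $P_i$ is a direct summand of a free $RG$-module and therefore flat (indeed projective) over $R$. Tensoring the above resolution over $R$ with $M$ and equipping each $P_i \otimes_R M$ with the diagonal $G$-action yields an exact sequence of $RG$-modules
\[
0 \to P_c \otimes_R M \to \cdots \to P_0 \otimes_R M \to M \to 0,
\]
where exactness is preserved because each $P_i$ is $R$-flat.

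The key step is to bound $\prd_{RG}(P_i \otimes_R M) \leq \prd_R(M) = n$. For this I would invoke the standard twisting isomorphism: for a free $RG$-module $F$ and any $RG$-module $N$, the map $g \otimes m \mapsto g \otimes g^{-1} m$ defines an $RG$-module isomorphism between $F \otimes_R N$ with diagonal action and $F \otimes_R N_{\mathrm{triv}}$, where on the right $G$ acts only on the first factor. Hence $F \otimes_R N$ with diagonal action is isomorphic to the induced module ${}^{RG} N_{\mathrm{triv}} = RG \otimes_R N$. Taking an $R$-projective resolution of $N$ of length $\prd_R(N)$ and tensoring with $RG$ (which is flat over $R$ and sends projective $R$-modules to projective $RG$-modules) yields $\prd_{RG}({}^{RG} N) \leq \prd_R(N)$. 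Since each $P_i$ is a direct summand of such a free module, the bound transfers: $\prd_{RG}(P_i \otimes_R M) \leq \prd_R(M) = n$.

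Finally I would conclude by the standard splicing argument: breaking the length-$c$ resolution above into short exact sequences and applying dimension shifting, a module admitting a resolution of length $c$ by $RG$-modules of projective dimension at most $n$ has projective dimension at most $c + n$. This gives
\[
\prd_{RG}(M) \leq c + n = \cd_R(G) + \prd_R(M),
\]
as required. The main (mild) obstacle is setting up the twisting isomorphism carefully and verifying that the required resolution of $R$ can be chosen finitely generated or at least projective in each degree; neither is serious, since $\cd_R(G) < \infty$ by hypothesis and the diagonal action argument is formal.
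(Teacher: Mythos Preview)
Your proof is correct and follows the standard diagonal-tensor argument. The paper does not actually give its own proof of this lemma; it simply cites an external reference, so there is nothing to compare against beyond noting that your argument is the classical one.

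Two minor comments. First, your justification in Step~3 (``exactness is preserved because each $P_i$ is $R$-flat'') could be phrased more precisely: what you are really using is that $P_\bullet \to R$ is a projective resolution of $R$ as an $R$-module (since $RG$-projectives are $R$-projective), so $H_i(P_\bullet \otimes_R M) = \Tor^R_i(R,M)$ vanishes for $i>0$. Second, the regularity hypothesis on $R$ is not used anywhere in your argument, and indeed it is not needed for the inequality itself; in the paper it is only invoked later to guarantee that $\prd_R(M)$ is finite, making the bound non-vacuous. You might remark on this.
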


\subsection{One-relator products}

Let $A$ and $B$ be groups and let $w\in A*B$ be an element. The length of $w$ will be understood to be the length of $w$ as a word over $A\cup B$. We say $w$ is reduced or cyclically reduced if it is reduced or cyclically reduced as a word over $A\cup B$ respectively. A prefix of $w$ is a word $u$ so that $w = uv$ for some $v$ and the length of $w$ is the length of $u$ plus the length of $v$. We say $u$ is a proper prefix if $v$ has positive length. A (proper) suffix and a (proper) subword are defined similarly. A non-empty word $w$ is not a proper power if there is no word $u$ and integer $n\geqslant 2$ so that $u^n = w$ in $A*B$.

If $w$ is cyclically reduced and of length at least two, the quotient group
\[
G = \frac{A*B}{\normal{w}}
\]
is called the {\bf one-relator product}. Without strong conditions on $A$, $B$ or $w$, it is very difficult to say anything about this group. One condition that can be put on $A$ and $B$ that yields a lot of structure on $G$ is local indicability.

We collect below some known statements about one-relator products of locally indicable groups. The statements are all due to Howie, see \cite{howie_81} for the first and \cite{Ho82} for the other two.

\begin{teo}
\label{one-relator_facts}
Let $A$ and $B$ be locally indicable groups, let $w\in A*B$ be a cyclically reduced element of length at least two and let $G$ be the one-relator product. Then:
\begin{enumerate}
\item $A$ and $B$ embed into $G$.
\item If $u, v$ are distinct proper prefixes of $w$, then $u\neq_G v$.
\item If $w$ is not a proper power in $A*B$, then $G$ is locally indicable.
\end{enumerate}
\end{teo}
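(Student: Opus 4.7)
The plan is to prove these three classical statements of Howie simultaneously by induction on the length $|w|$, using the tower technique for analyzing reduced maps between 2-complexes. The central idea is to work with the standard presentation 2-complex $K_G$ for $G = A*B/\normal{w}$: a tower over $K_G$ is a sequence of covering maps and 2-cell attachments, and one shows that any essential map $D \to K_G$ from a 2-complex factors through a maximal tower in a reduced way. The local indicability hypothesis is exactly what allows many covering maps in these towers to be realized as infinite cyclic covers pulled back from homomorphisms $H \to \Z$ defined on finitely generated subgroups of $A$ or $B$.

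For (1), the base case $|w|=2$, writing $w = ab$ with $a \in A \setminus \{1\}$ and $b \in B \setminus \{1\}$, gives $G \cong A *_{\langle a \rangle = \langle b^{-1} \rangle} B$; torsion-freeness of locally indicable groups makes $\langle a \rangle$ and $\langle b^{-1} \rangle$ both isomorphic to $\Z$, and Bass--Serre theory embeds the factors. For the inductive step, suppose $1 \neq a \in A$ became trivial in $G$; a minimal reduced van Kampen diagram would produce an essential map $D \to K_G$, and towering yields a smaller one-relator product whose relator is strictly shorter, contradicting induction. Statement (2) follows by a parallel argument: distinct proper prefixes $u \neq v$ with $u = v$ in $G$ would yield a diagram for $uv^{-1}$, and one arranges $uv^{-1}$ to be represented by a word of length less than $|w|$, which again falls to the inductive hypothesis.

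For (3), assuming $w$ is not a proper power, I would first establish that $K_G$ is aspherical (this is Howie's asphericity theorem, which itself uses the tower method together with (1) and (2)). Given a finitely generated subgroup $H \leqslant G$, one then needs a nontrivial homomorphism $H \to \Z$. The plan is to use the graph-of-groups decomposition of $G$ over the amalgamated factors together with a Mayer--Vietoris argument: asphericity provides control over $H_1$, and local indicability of the finitely generated subgroups of $A$ and $B$ arising as intersections with conjugates allows one to lift a homomorphism to $\Z$ from a vertex stabiliser up to $H$, using that $w$ not being a proper power prevents the relation from killing $H^{\mathrm{ab}}$.

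The main obstacle is the tower argument itself: establishing that minimal counterexamples factor through reduced towers, and that the resulting covers can always be chosen to be infinite cyclic covers (which is where local indicability is genuinely used), is Howie's technical innovation and requires careful geometric analysis of reduced diagrams. An alternative modern route uses Brodskii's theorem that locally indicable groups are left-orderable, which gives cleaner proofs of (1) and (3) by extending orderings through the one-relator product construction; however, statement (2) appears to require the more delicate geometric analysis inherent in the tower approach.
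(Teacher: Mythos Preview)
The paper does not prove this theorem; it is quoted as background with references to Howie \cite{howie_81} for (1) and \cite{Ho82} for (2) and (3). So there is nothing in the paper itself to compare against beyond Howie's original arguments. Your outline for (1) and (2) via the tower method is faithful to Howie's approach, and the base case and inductive structure you describe are essentially correct.

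For (3), however, your plan has a genuine gap. The claim ``asphericity provides control over $H_1$'' does not hold up: asphericity of $K_G$ concerns $\pi_2$ and tells you nothing about abelianizations of subgroups beyond what the presentation already gives. Moreover, $G$ does not in general split as a graph of groups with vertex groups $A$ and $B$ --- the Bass--Serre tree for $A*B$ does not carry a $G$-action once $\normal{w}$ is killed --- so the Mayer--Vietoris step you allude to has no setup. Howie's actual argument in \cite{Ho82} is a direct induction on $|w|$ of the same flavour as (1) and (2): using indicability of the factors one passes to a suitable infinite cyclic cover in which $w$ rewrites as a strictly shorter relator over a free product of locally indicable groups, and the ``not a proper power'' hypothesis is precisely what guarantees the rewritten relator is again not a proper power. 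Asphericity is a separate (and harder) theorem and is not an ingredient. A minor correction: ``locally indicable $\Rightarrow$ left-orderable'' is the Burns--Hale theorem; Brodskii's contribution was an independent, order-theoretic proof of the Freiheitssatz, i.e.\ statement~(1).
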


We shall also need the following theorem of Howie's \cite[Theorem 11]{Ho84} which is a generalisation of Lyndon's identity theorem.

\begin{teo}
\label{identity_theorem}
Let $A$ and $B$ be locally indicable groups, let $u\in A*B$ be a cyclically reduced word that is not a proper power, $w = u^n$ and let $G = \frac{A*B}{\normal{w}}$. If $N = \normal{w}$, then $N_{\ab}\cong \Z [G/\langle u\rangle]$ as a $\Z G$-module.
\end{teo}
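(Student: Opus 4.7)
The plan is to realise the relation module $N_{\ab}$ as the first homology of a $K(N,1)$-space sitting inside the universal cover of an appropriate presentation 2-complex, and then compute it from the long exact sequence of a pair together with the cellular chain complex.

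Set $F = A \ast B$ and let $Z = K(A,1) \vee K(B,1)$, a model for $K(F,1)$. Form the 2-complex $Y = Z \cup_{w} D^{2}$ by attaching a single 2-cell $e$ along the loop representing $w = u^{n}$, so that $\pi_{1}(Y) = G$. Write $p \colon \widetilde{Y} \to Y$ for the universal cover and set $\widetilde{Z} = p^{-1}(Z)$. The restricted covering $\widetilde{Z} \to Z$ is the regular cover corresponding to $N$, and being a cover of a $K(F,1)$-space it is itself a $K(N,1)$, so $H_{1}(\widetilde{Z};\Z) \cong N_{\ab}$ as $\Z G$-modules (with $G$ acting by deck transformations). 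The long exact sequence of the pair $(\widetilde{Y}, \widetilde{Z})$ then reads
\[
H_{2}(\widetilde{Y}) \xrightarrow{\iota} H_{2}(\widetilde{Y}, \widetilde{Z}) \xrightarrow{\partial} H_{1}(\widetilde{Z}) \to H_{1}(\widetilde{Y}) = 0,
\]
giving $N_{\ab} \cong H_{2}(\widetilde{Y}, \widetilde{Z}) / \iota\bigl(H_{2}(\widetilde{Y})\bigr)$. Since $G$ acts freely on lifts of the 2-cell $e$, excision identifies $H_{2}(\widetilde{Y}, \widetilde{Z})$ with the free $\Z G$-module $\Z G \cdot [\widetilde{e}]$ of rank one.

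A direct cellular computation shows $\partial[\widetilde{e}] = (1 + u + \cdots + u^{n-1}) \xi$ in $C_{1}(\widetilde{Z})$, where $\xi$ lifts the path representing $u$. Because the image of $u$ in $G$ has order exactly $n$ (by \cref{one-relator_facts}(2)), the chain $(u-1)[\widetilde{e}]$ satisfies $\partial((u-1)[\widetilde{e}]) = (u^{n}-1)\xi = 0$, so it is a 2-cycle whose image under $\iota$ is $u - 1 \in \Z G$. This yields $\Z G (u-1) \subseteq \iota(H_{2}(\widetilde{Y}))$.

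The crux is the reverse inclusion $\iota(H_{2}(\widetilde{Y})) \subseteq \Z G(u-1)$: modulo boundaries, every 2-cycle in $\widetilde{Y}$ must be a $\Z G$-translate of the obvious cycle $(u-1)[\widetilde{e}]$. Since $\widetilde{Z}$ can be arbitrarily high-dimensional when $A$ and $B$ are not free, this does not follow from a naive dimension count. I would prove it by a Howie-tower argument applied to singular maps of 2-complexes into $\widetilde{Y}$: given any 2-cycle, lift a singular realisation through a maximal tower of covering spaces, and use local indicability of the factors at each stage to force the top of the tower to behave like a free product; from this one reads off that the original cycle is a $\Z G$-combination of translates of $(u-1)[\widetilde{e}]$. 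This tower step is the main obstacle, and local indicability is indispensable, since the analogous identity statement without it is essentially equivalent to the Kervaire--Laudenbach conjecture. Combining both inclusions yields
\[
N_{\ab} \;\cong\; \Z G \big/ \Z G (u - 1) \;\cong\; \Z[G/\langle u \rangle]
\]
as $\Z G$-modules.
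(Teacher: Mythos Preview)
The paper does not prove this statement; it is quoted as Howie's theorem \cite[Theorem~11]{Ho84}. So there is no in-paper proof to compare against, only Howie's original argument.

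Your topological setup is correct and is essentially Howie's framework: realise $N_{\ab}$ via the long exact sequence of the pair $(\widetilde{Y},\widetilde{Z})$, identify $H_2(\widetilde{Y},\widetilde{Z})\cong \Z G$ by excision, and then the problem reduces to computing the image of $H_2(\widetilde{Y})=\pi_2(Y)$ inside $\Z G$. The containment $\Z G(u-1)\subseteq \iota(H_2(\widetilde{Y}))$ is the easy direction, and your boundary calculation for it is fine.

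The gap is that the reverse inclusion is the entire content of the theorem, and you have only named the method (``a Howie-tower argument'') without executing it. Howie's proof is a delicate induction on relator length through towers of one-relator products, using local indicability at each stage to produce an epimorphism onto $\Z$ and staircase-reduce the relator; the asphericity/identity statement is then pulled back down the tower. None of this is routine, and the phrase ``force the top of the tower to behave like a free product'' hides exactly the technical work (controlling how the 2-cycle interacts with the staircase decomposition, and why no exotic spheres appear). As written, your proposal is a correct outline of Howie's strategy but not a proof: the step you flag as ``the main obstacle'' is in fact the theorem itself.
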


\subsection{\texorpdfstring{$L^2$}--Betti numbers of modules} \label{sec: wAc}
Let $G$ be a countable group and let $\ell^2(G)$ denote the Hilbert space with Hilbert basis the elements of $G$, that is, $\ell^2(G)$ consists of all square-summable formal sums
\[
\sum_{g \in G} a_g g
\]
with $a_g \in \mathbb{C}$, and inner product
\[
\left\langle \sum_{g \in G} a_g g, \sum_{g \in G} b_g g \right\rangle = \sum_{g \in G} a_g \overline{b_g}.
\]
The left and right multiplication actions of $G$ on itself extend to left and right actions of $G$ on $\ell^2(G)$. The right action of $G$ on $\ell^2(G)$ further extends to an action of $\mathbb{C}G$ on $\ell^2(G)$, and hence we obtain that the group algebra $\mathbb{C}G$ acts faithfully as bounded linear operators on $\ell^2(G)$.

The von Neumann algebra $\mathcal{N}(G)$ is the ring of bounded operators on $\ell^2(G)$ which commute with the left action of $G$. We consider $\mathbb{C}G$ as a subalgebra of $\mathcal{N}(G)$. The ring $\mathcal{N}(G)$ satisfies the left and right Ore conditions (a result proved by S.~K.~Berberian in \cite{Be82}), and its classical ring of fractions is denoted by $\mathcal{U}(G)$. The ring $\mathcal{U}(G)$ can also be described as the ring of densely defined closed (unbounded) operators which commute with the left action of $G$.

The computation of $L^2$-Betti numbers has been algebraized through the seminal works of L\"uck \cite{Lu88I, Lu88II} and the thesis of Reich \cite{Re98}. The basic observation is that one can use a dimension function $\dim_{\mathcal{U}(G)}$, which is defined for all modules over $\mathcal{U}(G)$, and compute the $k$-th $L^2$-Betti number of a $\mathbb{C}G$-module $M$ using the following formula:
\[
\beta^{\mathbb{C}G}_k(M) = \dim_{\mathcal{U}(G)} \operatorname{Tor}^{\mathbb{C}G}_k(\mathcal{U}(G), M).
\]
We recommend the books \cite{Lu02book, Kam19} and the survey \cite{Ja19survey} for the definition of $\dim_{\mathcal{U}(G)}$ and its properties.

The ring $\mathcal{U}(G)$ is an example of a $*$-regular ring. Already in the case $G = \langle t \rangle \cong \mathbb{Z}$ it is quite complicated as a ring (it is isomorphic to the ring of measurable functions on $S^1$). Therefore, it is sometimes more convenient to consider a smaller object $\mathcal{R}_{\mathbb{C}G}$, introduced by Linnell and Schick \cite{LS12}.

 We define $\mathcal{R}_{\CC G}$ as the $*$-regular closure of $\CC G$ in $\mathcal{U}(G)$, i.e., $\mathcal{R}_{\CC G}$ is the smallest $*$-regular subring of $\mathcal{U}(G)$ that contains $\CC G$. 
 We can also define a dimension function $\dim_{\mathcal{R}_{\CC G}}$ on $\mathcal{R}_{\CC G}$-modules and use it to define the $L^2$-Betti numbers (see \cite{Ja19survey}). The object $\mathcal{R}_{\CC G}$ is much simpler than $\mathcal{U}(G)$. For example, in the case $G = \langle t \rangle \cong \mathbb{Z}$, $\mathcal{R}_{\CC G}$ is isomorphic to $\CC(t)$, and $\dim_{\mathcal{R}_{\CC G}}$ is the usual dimension of $\CC(t)$-vector spaces.

 Let $K$ be a subfield of $\CC$ and  $M$   a $K G$-module, then its $L^2$-Betti numbers are computed using the formula
\[
\beta^{K G}_k(M) = \dim_{\mathcal{R}_{\CC G}} \operatorname{Tor}^{\CC G}_k(\mathcal{R}_{\CC G}, M).
\]

The \textbf{strong Atiyah conjecture} (over $K$) predicts that if $\operatorname{lcm}(G)$, the least common multiple of the orders of finite subgroups of $G$, is finite, then for every $KG$-module $M$
\[
\beta^{{K}G}_k(M) \in \frac{1}{\operatorname{lcm}(G)} \mathbb{Z}_{\ge 0} \cup \{\infty\}.
\]
We will use the fact that the strong Atiyah conjecture has been proved for locally indicable groups \cite{JL20}. In this case $\lcm(G)=1$ and $\mathcal{R}_{\CC G}$ is a division ring.
However, in this paper we will actually rely in most of the situations on a weaker version of the Atiyah conjecture.

We say that a group $G$ satisfies the \textbf{weak Atiyah conjecture} (over $K$)  if there exists $l \in \mathbb{N}$ such that for every $KG$-module $M$ and every $k$,
\[
\beta^{{K}G}_k(M) \in \frac{1}{l} \mathbb{Z}_{\ge 0} \cup \{\infty\}.
\]

When $K = \mathbb{C}$, this is equivalent to the ring   $\mathcal{R}_{\mathbb{C}G}$ being a semisimple (and so,  Artinian) algebra. There exist groups for which the weak Atiyah conjecture is known to hold, while the strong version remains open. This distinction arises because a group that virtually satisfies the weak Atiyah conjecture automatically satisfies it itself, whereas this inheritance property is not known for the strong Atiyah conjecture.  Thus, for example, we know that the weak Atiyah conjecture holds for finitely generated  groups that are linear over $\mathbb{C}$, but the strong Atiyah conjecture remains open for them (see, for example, Proposition 11.4, Theorem 12.7 and Question 12.8 from \cite{Ja19survey}).

Every finitely generated field $K$ of characteristic zero can be embedded into $\mathbb{C}$. Any such embedding induces a definition of the $L^2$-Betti numbers $\beta^{K G}_k(M)$. It was conjectured in \cite{Ja19} (and proved for sofic groups) that $\beta^{K G}_k(M)$ does not depend on the choice of embedding. For example, this is known to hold for locally indicable groups \cite{JL20}. This allows one to define $\beta^{K G}_k(M)$ for locally indicable groups and  any field $K$ of characteristic zero. In fact, the solution of the strong Atiyah conjecture for locally indicable groups imply that there exists a division ring $\mathcal{D}_{K G}$ such that
\[
\beta^{K G}_k(M) = \dim_{\mathcal{D}_{K G}} \operatorname{Tor}^{K G}_k(\mathcal{D}_{K G}, M).
\]
We recommend the reader to read the preliminaries of \cite{JL23} to find more information on the division ring $\mathcal{D}_{K G}$.
We will use the following result about $\mathcal{D}_{K G}$ proved in \cite{JL23}.

\begin{pro}\label{preflat}
    Let $G$ be a locally indicable group, $K$ a field of characteristic zero, and $M$ a right one-relator $KG$-module. Then the right $KG$-module $\mathcal{D}_{KG} \otimes_{K} M$ is flat.
\end{pro}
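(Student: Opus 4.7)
The plan is to exploit the defining short exact sequence of the one-relator $KG$-module $M$,
\[
0 \to KG \xrightarrow{\rho} F \to M \to 0,
\]
where $F$ is a finitely generated free right $KG$-module and $\rho(x) = rx$ for some $r \in F$. Since $K$ is a field, the functor $\mathcal{D}_{KG} \otimes_K -$, equipping its image with the diagonal right $KG$-action $(d \otimes m)\cdot g = dg \otimes mg$ for $g \in G$ (extended $K$-linearly), is exact, yielding a short exact sequence of right $KG$-modules
\[
0 \to \mathcal{D}_{KG} \otimes_K KG \to \mathcal{D}_{KG} \otimes_K F \to \mathcal{D}_{KG} \otimes_K M \to 0.
\]
First I would note that for any free right $KG$-module $V$, the diagonal module $\mathcal{D}_{KG} \otimes_K V$ is itself a free right $KG$-module: the ``untwisting'' assignment $d \otimes g \mapsto dg^{-1} \otimes g$ for $g \in G$ (extended $K$-linearly) is an isomorphism from the diagonal action to the action concentrated on the second tensor factor, which is manifestly free on any $K$-basis of $\mathcal{D}_{KG}$. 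This produces a length-one free resolution of $\mathcal{D}_{KG} \otimes_K M$, giving $\prd_{KG}(\mathcal{D}_{KG} \otimes_K M) \leq 1$.

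To promote this bound to flatness, I would show that $\Tor_1^{KG}(\mathcal{D}_{KG} \otimes_K M, N) = 0$ for every left $KG$-module $N$. Using the natural isomorphism
\[
(\mathcal{D}_{KG} \otimes_K V)_{\mathrm{diag}} \otimes_{KG} N \;\cong\; \mathcal{D}_{KG} \otimes_{KG} (V \otimes_K N)_{\mathrm{tw}},
\]
where $(V \otimes_K N)_{\mathrm{tw}}$ carries the left $KG$-action $g \cdot (v \otimes n) = vg^{-1} \otimes gn$, the Tor group is identified with the kernel of a natural map $\Phi_r$ on $\mathcal{D}_{KG} \otimes_K N$ that, after transport of structure, amounts to left multiplication by the single element $r \in KG$ in a twisted $KG$-action. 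Since $\mathcal{D}_{KG}$ is a division ring by the strong Atiyah conjecture for locally indicable groups \cite{JL20}, one expects that nonzero $r$ acts injectively.

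The main obstacle will be verifying injectivity of $\Phi_r$ for arbitrary $N$. For $N$ free, a direct calculation reduces $\Phi_r$ to right multiplication by a nonzero element of $\mathcal{D}_{KG}$ on each free coordinate, which is trivially injective; extending to arbitrary $N$ requires a d\'evissage argument invoking exactness of $\mathcal{D}_{KG} \otimes_K -$ together with the fact that the relator is a single element of $KG$. This single-relation hypothesis is essential: for modules presented by several relations, the corresponding map would be given by a matrix over $KG$, and its injectivity in $\mathcal{D}_{KG}$ would require the deeper fact that $KG$ is a Sylvester domain with respect to the universal embedding $KG \hookrightarrow \mathcal{D}_{KG}$.
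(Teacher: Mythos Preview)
Your overall strategy---tensor the defining resolution of $M$ by $\mathcal{D}_{KG}\otimes_K(-)$, untwist to obtain a length-one free resolution of $\mathcal{D}_{KG}\otimes_K M$, and then check that the induced map $\Phi_r$ has trivial kernel after applying $-\otimes_{KG}N$---is sound and close to what the paper does. The gap is precisely at the step you flag as ``the main obstacle'': extending injectivity of $\Phi_r$ from free $N$ to arbitrary $N$.

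For $N$ free, the vanishing of $\Tor_1^{KG}(\mathcal{D}_{KG}\otimes_K M, N)$ is automatic, since free left modules are flat; your ``direct calculation'' therefore carries no information. The d\'evissage you sketch does not go through: writing $0\to N'\to F\to N\to 0$ with $F$ free and applying the snake lemma to the diagram of $\Phi_r$'s, injectivity on $F$ yields only $\ker(\Phi_r|_N)\hookrightarrow\mathrm{coker}(\Phi_r|_{N'})$, which need not vanish. Exactness of $\mathcal{D}_{KG}\otimes_K(-)$ alone cannot close this; one needs a genuine input about $\mathcal{D}_{KG}$ beyond its being a division ring. (Indeed, the bound $\prd_{KG}(\mathcal{D}_{KG}\otimes_K M)\le 1$ by itself does not imply flatness.)

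There is also a slip in your formulation: the relator $r$ lies in the free module $F$, not in $KG$. The paper handles this by choosing a coordinate of $F$ on which $r$ has nonzero projection $a\in KG$, splitting $F=L_0\oplus KG$, and thereby reducing to the rank-one quotient $KG/aKG$ via the exact sequence
\[
0\to \mathcal{D}_{KG}\otimes_K L_0\to \mathcal{D}_{KG}\otimes_K M\to \mathcal{D}_{KG}\otimes_K(KG/aKG)\to 0.
\]
For the rank-one piece it then invokes \cite[Lemma~5.1]{JL23}: for every left $KG$-module $V$ one has $\mathrm{Ann}_{V\otimes_K\mathcal{D}_{KG}}(a)=0$, which is exactly the injectivity of your $\Phi_a$ for arbitrary $N$. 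This lemma uses the Hughes-free structure of $\mathcal{D}_{KG}$ for locally indicable $G$ and is not a formal consequence of $\mathcal{D}_{KG}$ being a skew field; it is the missing ingredient in your plan.
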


\begin{proof}
Since $M$ is one-relator, there exists a free right $KG$-module $L$ and $l \in L$ such that 
\[ M \cong L / l \cdot KG. \]
Without loss of generality, we may assume that $l \neq 0$. Then there exists a decomposition 
\[ L = L_0 \oplus KG, \]
where $L_0$ is a free right $KG$-submodule of $L$, and the projection $a$ of $l$ in the summand $KG$ is nontrivial. Hence, 
\[ l \cdot KG \cap L_0 = \{0\}. \]
Thus, we obtain an exact sequence of right $KG$-modules:
\[
0 \longrightarrow \mathcal{D}_{KG} \otimes_{K} L_0 \longrightarrow \mathcal{D}_{KG} \otimes_{K} M \longrightarrow \mathcal{D}_{KG} \otimes_{K} (KG / aKG) \longrightarrow 0.
\]

Since $L_0$ is free, the right $KG$-module $\mathcal{D}_{KG} \otimes_{K} L_0$ is free. On the other hand, by \cite[Lemma 5.1]{JL23}, for every left $KG$-module $V$, we have that
\[
{\rm Ann}_{V \otimes_K \mathcal{D}_{KG}}(a) = 0.
\]
However,
\[
\Tor_1^{KG}\!\left( \mathcal{D}_{KG} \otimes_{K} (KG / aKG), V\right) 
   \cong {\rm Ann}_{V \otimes_K \mathcal{D}_{KG}}(a).
\]
Hence, $\mathcal{D}_{KG} \otimes_{K} (KG / aKG)$ is flat. 

A flat-by-flat module is flat, and therefore $\mathcal{D}_{KG} \otimes_{K} M$ is flat.
\end{proof}

We will also need the following result. 

\begin{pro} \label{vanishing}
Let $G$ be a group and let $N \trianglelefteq G$ be a normal subgroup such that $G/N$ is infinite amenable. Let $M$ be a finitely generated $\mathbb{Q}G$-module. Assume that $\beta_1^{\mathbb{Q}N}(M)$ is finite. Then $\beta_1^{\mathbb{Q}G}(M) = 0$.
\end{pro}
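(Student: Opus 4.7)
The statement is a standard $L^2$-vanishing result: an infinite amenable quotient kills finite Betti numbers at the normal-subgroup level. The plan is as follows.

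First, since $M$ is finitely generated as a $\mathbb{Q}G$-module, I would pick a free $\mathbb{Q}G$-resolution $P_\bullet \to M$ whose degree-zero term is finitely generated. Because $\mathbb{Q}G$ is free as a $\mathbb{Q}N$-module (any transversal for $N\leqslant G$ is a basis), the complex $P_\bullet$ is simultaneously a free $\mathbb{Q}N$-resolution of $M$. Hence both Tor-groups may be read off from the same chain complex:
\[
\Tor_1^{\mathbb{Q}N}(\mathcal{U}(N), M) \;=\; H_1\bigl(\mathcal{U}(N) \otimes_{\mathbb{Q}N} P_\bullet\bigr),\qquad
\Tor_1^{\mathbb{Q}G}(\mathcal{U}(G), M) \;=\; H_1\bigl(\mathcal{U}(G) \otimes_{\mathbb{Q}G} P_\bullet\bigr).
\]

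Next I would invoke the following vanishing principle, which encodes the infinite amenability of $Q=G/N$: any $\mathcal{U}(G)$-module whose restriction to $\mathcal{U}(N)$ has finite $\mathcal{U}(N)$-dimension must have $\mathcal{U}(G)$-dimension zero. Heuristically, the two dimensions should be related by a factor of $[G:N]=|Q|=\infty$; rigorously, this principle is obtained from a Lyndon--Hochschild--Serre-type spectral sequence in the von Neumann setting together with L\"uck's theorem on dimension-flatness of $\mathcal{U}(Q)$ over $\mathbb{Q}Q$ for amenable $Q$, or alternatively from a direct F\o lner averaging argument inside $\mathcal{N}(G)$.

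Finally, the hypothesis $\beta_1^{\mathbb{Q}N}(M)<\infty$ says that $H_1(\mathcal{U}(N)\otimes_{\mathbb{Q}N} P_\bullet)$ has finite $\mathcal{U}(N)$-dimension. The chain map $\mathcal{U}(N)\otimes_{\mathbb{Q}N} P_\bullet \to \mathcal{U}(G)\otimes_{\mathbb{Q}G} P_\bullet$ induced by $\mathcal{U}(N)\hookrightarrow \mathcal{U}(G)$, combined with the additivity and cofinality properties of $\dim_{\mathcal{U}(N)}$, exhibits $\Tor_1^{\mathbb{Q}G}(\mathcal{U}(G),M)$ as a $\mathcal{U}(G)$-module whose underlying $\mathcal{U}(N)$-module still has finite dimension. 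The vanishing principle then yields $\beta_1^{\mathbb{Q}G}(M)=0$.

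The main obstacle is establishing the vanishing principle in the second paragraph: transferring dimension information through the inclusion $\mathcal{N}(N)\hookrightarrow \mathcal{N}(G)$ with an infinite amenable $Q$-action is the technical heart of the argument, requiring either the careful construction of an LHS-type spectral sequence for $L^2$-homology with coefficients in a $\mathbb{Q}G$-module, or a trace-theoretic F\o lner computation. Everything else is bookkeeping with the resolution and with Lück's dimension function.
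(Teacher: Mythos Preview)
Your overall strategy is right in spirit, but the final paragraph has a real gap. You assert that the chain map $\mathcal{U}(N)\otimes_{\mathbb{Q}N} P_\bullet \to \mathcal{U}(G)\otimes_{\mathbb{Q}G} P_\bullet$ ``exhibits $\Tor_1^{\mathbb{Q}G}(\mathcal{U}(G),M)$ as a $\mathcal{U}(G)$-module whose underlying $\mathcal{U}(N)$-module still has finite dimension.'' This is not justified: the terms of the target complex are copies of $\mathcal{U}(G)$, which already have infinite $\mathcal{U}(N)$-dimension, and the induced map on $H_1$ is in general neither injective nor surjective, so it gives no control over $\dim_{\mathcal{U}(N)}$ of the target homology. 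There is no reason to expect $\dim_{\mathcal{U}(N)}\Tor_1^{\mathbb{Q}G}(\mathcal{U}(G),M)<\infty$, and hence your vanishing principle, even if established, does not apply to this module.

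The paper sidesteps this by applying the vanishing principle one ring down. Set $S=\mathcal{U}(N)*(G/N)$; then $S\otimes_{\mathbb{Q}G}(-)\cong\mathcal{U}(N)\otimes_{\mathbb{Q}N}(-)$, so the $S$-module $\Tor_1^{\mathbb{Q}G}(S,M)$ is precisely $\Tor_1^{\mathbb{Q}N}(\mathcal{U}(N),M)$ and therefore has finite $\mathcal{U}(N)$-dimension by hypothesis. A theorem of Peterson then gives $\dim_{\mathcal{U}(G)}\bigl(\mathcal{U}(G)\otimes_S\Tor_1^{\mathbb{Q}G}(S,M)\bigr)=0$ because $G/N$ is infinite amenable. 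It remains to identify this quantity with $\beta_1^{\mathbb{Q}G}(M)$: amenability of $G/N$ makes the dimension function $L\mapsto\dim_{\mathcal{U}(G)}(\mathcal{U}(G)\otimes_S L)$ exact on $S$-modules, and running it along the four-term sequence obtained from $0\to U\to(\mathbb{Q}G)^d\to M\to 0$ (this is where finite generation of $M$ enters) gives the identification. So the issue you flag as the main obstacle is indeed the heart of the matter, but it lives at the level of $S$-modules, not $\mathcal{U}(G)$-modules.
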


\begin{proof}
We can represent $\mathbb{Q}G$ as a crossed product ring $\mathbb{Q}N * G/N$ (where $G$ acts by conjugation on $\Q N$), and define $S = \mathcal{U}(N) * G/N$. Since $M$ is finitely generated, there exists an exact sequence
\[
0 \to U \to (\mathbb{Q}G)^d \to M \to 0.
\]
This induces an exact sequence
\[
0 \to \Tor_1^{\mathbb{Q}G}(S, M) \to S \otimes_{\mathbb{Q}G} U \to S^d \to S \otimes_{\mathbb{Q}G} M \to 0.
\]

For any $S$-module $L$, define
\[
\dim L := \dim_{\mathcal{U}(G)}(\mathcal{U}(G) \otimes_S L).
\]
By \cite[Corollary 12.2 and Theorem 8.2]{Ja19survey}, this dimension function is exact. Thus, we have
\[
\dim \Tor_1^{\mathbb{Q}G}(S, M) = \dim_{\mathcal{U}(G)} \Tor_1^{\mathbb{Q}G}(\mathcal{U}(G), M) = \beta_1^{\mathbb{Q}G}(M).
\]

Now observe that
\[
\Tor_1^{\mathbb{Q}G}(S, M) \cong \Tor_1^{\mathbb{Q}N}(\mathcal{U}(N), M).
\]
Therefore,
$$  \dim_{\mathcal{U}(N)} \Tor_1^{\mathbb{Q}G}(S, M)=\beta_1^{\mathbb{Q}N}(M)$$ is finite.
By \cite[Theorem 5.1]{Pe_L2}, this implies that $\dim \Tor_1^{\mathbb{Q}G}(S, M) = 0$. Therefore, $\beta_1^{\mathbb{Q}G}(M) = 0$.
\end{proof}

\subsection{Graphs of groups and groups acting on trees}

We shall need some useful facts about graphs of groups and groups acting on trees. The reader is directed to Serre's book \cite{serre_80} and Bass' article \cite{Ba93} for the necessary background.

Recall that a graph of groups is a tuple 
\[
\mathcal{G} = (\Gamma, \{G_v\}_{v\in V(\Gamma)}, \{G_e\}_{e\in E(\Gamma)}, \{\partial_e^{\pm}\}_{e\in E(\Gamma)}\}
\]
where $\Gamma$ is a graph, the groups $G_v$ are the vertex groups, the groups $G_e$ are the edge groups and the maps $\partial_e^{\pm}\colon G_e\to G_{e^{\pm}}$ are monomorphisms. Here we use $e^{+}$ to denote the target vertex of $e$ and $e^-$ the origin vertex of $e$. We fix an orientation $E^+\subset E(\Gamma)$ and a spanning tree $T\subset \Gamma$.

The \textbf{fundamental group} $G=\pi_1(\mathcal G, T)$ of $\mathcal G$ with respect to $T$ is the group with presentation:
 \begin{itemize}
 	\item generators $\{G_v, t_e: v\in V(\Gamma), e\in E^+\}$;
 	\item relations of each $G_v, v\in V(\Gamma)$;
 	\item relations $(\partial_e^-(g))^{t_e}=\partial^+_e(g)$ for each $g\in G_e, e\in E^+$; 
 	\item relations $t_e=1$ if $e\in E(T)$ 	.
\end{itemize}
This group does not depend on the choice of $T$ or the orientation.

From a given graph of groups $\mathcal{G}$ one can construct a tree $\mathcal{T}$ on which $\pi_1(\mathcal{G}, T)$ acts without edge inversions, called the {\bf Bass--Serre tree}. Conversely, from a group action $G\curvearrowright \mathcal{T}$ on a tree, called a {\bf $G$-tree}, one can define the {\bf quotient graph of groups} $\mathcal{G}$ so that $G \cong \pi_1(\mathcal{G}, T)$ (where the underlying graph of $\mathcal{G}$ is $G\backslash \mathcal{T}$) and so that the Bass--Serre tree for $\mathcal{G}$ is $G$-equivariantly isomorphic to $\mathcal{T}$. Importantly, the vertex and edge stabilisers of $\mathcal{T}$ are conjugates of the vertex and edge groups of $\mathcal{G}$.

Important examples of groups acting on trees are the following:
\begin{enumerate}
    \item If $G$ is an infinitely ended group of type $\FP_2(\Z)$, then $G$ acts non-trivially and co-compactly on a tree $\mathcal{T}$ so that each edge stabiliser is finite and each vertex stabiliser has at most one end. The existence of such a $G$-tree is a theorem of Dunwoody \cite{dunwoody_85}.
    \item If $G$ is a one-ended hyperbolic group that is not co-compact Fuchsian, then $G$ acts co-compactly on a tree $\mathcal{T}$ so that each edge stabiliser is 2-ended (and so virtually $\Z$) and each vertex stabiliser either has finite outer automorphism group (relative to adjacent edge groups) or is virtually free. This is known as the JSJ-tree for $G$ and is canonical in the sense that any automorphism of $G$ induces a $G$-equivariant isomorphism of $\mathcal{T}$ (and so $G\rtimes_{\psi}\Z$ acts on $\mathcal{T}$ if $\psi\in \Out(G)$). The existence of such a tree is a theorem of Bowditch \cite{Bo98}.
\end{enumerate}
We shall use both of these decompositions in \cref{sec:extensions}.

The following theorem of Chiswell \cite[Theorem 1]{Ch76} allows one to compute (co)homology of (fundamental groups of) graphs of groups in terms of the (co)homology of the vertex and edge groups. In \cref{sec:graphs_of_group_pairs} we shall extend Chiswell's result to the setting of graphs of group pairs.

\begin{teo}
\label{Chiswell}
    Let $\mathcal{G}$ be a graph of groups as above and let $R$ be a ring. The following sequence is exact:
    \begin{equation*} 
    \begin{tikzcd}
    0 \arrow[r] & \bigoplus_{e\in E^+}RG\otimes_{RG_e}R \arrow[r, "\delta"]               & \bigoplus_{v\in V(\Gamma)}RG\otimes_{RG_v}R \arrow[r, "\epsilon"]    & R \arrow[r] 
             & 0 
    \end{tikzcd}
    \end{equation*}
    where     $\epsilon$ is the augmentation map and  $\delta$ is given by
    \[
    \delta(s\otimes 1_e) = s\cdot t_e\otimes 1_{e^+}-s\otimes 1_{e^-}  .
    \]
\end{teo}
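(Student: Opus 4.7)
The plan is to recognise the stated sequence as the augmented cellular chain complex (with $R$-coefficients) of the Bass--Serre tree $\mathcal{T}$ associated to $(\mathcal{G},T)$, and then to invoke the contractibility of $\mathcal{T}$. This is the standard route to Chiswell's formula, and it uses only the two facts that a nonempty tree is connected and simply connected.

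First, I would recall the description of $\mathcal{T}$ as a $G$-set. Its vertex set decomposes into $G$-orbits indexed by $V(\Gamma)$, with the orbit corresponding to $v$ identified with $G/G_v$; similarly, after choosing the orientation $E^+$, its edge set decomposes as $\bigsqcup_{e\in E^+} G/G_e$. Consequently, as left $RG$-modules we have canonical identifications
\[
R[V(\mathcal{T})] \cong \bigoplus_{v\in V(\Gamma)} RG\otimes_{RG_v} R, \qquad R[E(\mathcal{T})] \cong \bigoplus_{e\in E^+} RG\otimes_{RG_e} R,
\]
so the two middle terms of the sequence in the statement are exactly the chain modules of $\mathcal{T}$ in degrees $0$ and $1$.

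Next, because $\mathcal{T}$ is a nonempty tree it is contractible, so the augmented cellular chain complex
\[
0\longrightarrow R[E(\mathcal{T})] \xrightarrow{\ \partial\ } R[V(\mathcal{T})] \xrightarrow{\ \epsilon\ } R \longrightarrow 0
\]
is exact: surjectivity of $\epsilon$ is immediate, exactness at $R[V(\mathcal{T})]$ records the connectedness of $\mathcal{T}$, and injectivity of $\partial$ records the vanishing of $H_1(\mathcal{T};R)$, i.e.\ the absence of cycles in $\mathcal{T}$.

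It remains to verify that, under the identifications above, $\partial$ coincides with the formula given for $\delta$. By the construction of the Bass--Serre tree, the standard lift of $e\in E^+$ corresponds to the trivial coset in $G/G_e$, and the defining relations $(\partial_e^-(g))^{t_e}=\partial_e^+(g)$ together with the convention $t_e=1$ for $e\in E(T)$ force this edge to run from $1\cdot G_{e^-}$ to $t_e\cdot G_{e^+}$. Hence $\partial(1\otimes 1_e)=t_e\otimes 1_{e^+}-1\otimes 1_{e^-}$, and the general formula follows by $G$-equivariance. The one point that requires care is precisely this conventional check: different conventions for orienting HNN-edges and for placing the stable letter $t_e$ produce mirror-image formulas, so one must be consistent with the presentation of $\pi_1(\mathcal{G},T)$ written out in the statement. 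Once that is fixed, the verification is direct and the theorem follows.
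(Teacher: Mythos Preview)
Your argument is correct and is precisely the standard proof of this result. Note, however, that the paper does not actually prove this statement: it is quoted as a theorem of Chiswell with a reference to \cite[Theorem 1]{Ch76}, so there is no in-paper proof to compare against. Your Bass--Serre tree approach---identifying the sequence with the augmented cellular chain complex of $\mathcal{T}$ and reading off exactness from contractibility---is exactly how one proves it, and your check that the boundary map matches the stated $\delta$ under the paper's conventions (in particular the relation $(\partial_e^-(g))^{t_e}=\partial_e^+(g)$ with $g^h=h^{-1}gh$) is the only genuinely delicate point, which you handle correctly.
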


Finally, the following proposition and its corollary will be useful in the proof of \cref{rel_hom_coherence}.

\begin{pro} \label{subtree}
Let $G$ be a group and let $\mathcal{T}$ be a $G$-tree with trivial edge stabilisers. Let $H\leqslant G$ be a subgroup and $\mathcal{S}\subset \mathcal{T}$ an $H$-invariant subtree such that the induced map of graphs $H\backslash \mathcal{S}\to G\backslash \mathcal{T}$ is injective. If $N\leqslant G$ is a subgroup acting freely on $\mathcal{T}$, then $N$ is free and $N\cap H$ is a free factor of $N$.
\end{pro}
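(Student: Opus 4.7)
The plan is to deduce both conclusions from a careful analysis of the natural graph morphism $\phi\colon (N\cap H)\backslash \mathcal{S}\to N\backslash \mathcal{T}$. The freeness of $N$ is immediate from the classical fact that a group acting freely on a tree is free, giving $N\cong \pi_1(N\backslash \mathcal{T})$. For the free factor claim I would factor $\phi$ as
\[
(N\cap H)\backslash \mathcal{S}\xrightarrow{\psi} N\backslash(N\mathcal{S})\xrightarrow{\iota} N\backslash \mathcal{T},
\]
where $N\mathcal{S}\subseteq \mathcal{T}$ is the $N$-orbit of $\mathcal{S}$ (an $N$-invariant subforest of $\mathcal{T}$), $\iota$ is a subgraph inclusion, and $\psi$ is the natural quotient onto the image.

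The crucial step is to show that $\psi$ is \emph{bijective on edges}. Surjectivity is tautological. For injectivity, if $e_1, e_2\in \mathcal{S}$ are edges with $ne_1 = e_2$ for some $n\in N$, then the injectivity of $H\backslash \mathcal{S}\to G\backslash \mathcal{T}$ applied to $e_1, e_2$ yields some $h\in H$ with $he_1 = e_2$; then $h^{-1}n$ stabilises $e_1$, so $h^{-1}n = 1$ by triviality of edge stabilisers, and hence $n = h\in N\cap H$. The map $\psi$ is vertex-surjective by construction, and the image $N\backslash(N\mathcal{S})$ is connected because $N$ acts transitively on the connected components of $N\mathcal{S}$ (as it already acts transitively on the translates $n\mathcal{S}$). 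Since $N\cap H$ acts freely on the simply connected subtree $\mathcal{S}$, we also have $\pi_1((N\cap H)\backslash \mathcal{S})\cong N\cap H$.

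I would then invoke two elementary graph-theoretic facts: (a) a morphism of connected graphs that is vertex-surjective and edge-bijective induces a free factor inclusion on $\pi_1$; and (b) a connected subgraph inclusion induces a free factor inclusion on $\pi_1$. Applying (a) to $\psi$ and (b) to $\iota$, and using that a free factor of a free factor is a free factor, we conclude that $N\cap H$ sits inside $N$ as a free factor.

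The main step requiring care is fact (a). The argument I have in mind is the following: pick a spanning tree $T_Y$ of $Y$ and a spanning tree $T_Z\subseteq \psi(T_Y)$ of $Z$, and decompose $E(Z)\setminus T_Z = S_1\sqcup S_2$ where $S_1 = E(\psi(T_Y))\setminus T_Z$ and $S_2 = \psi(E(Y)\setminus T_Y)$. Each image generator $\psi_*\gamma_f$ of $\psi_*\pi_1(Y)$ can then be written in the $T_Z$-basis as $w_f\,\delta_{\psi(f)}\,w_f'$ with $w_f, w_f'\in \langle S_1\rangle$, and a Nielsen change of basis replacing each $\delta_{\psi(f)}$ by $w_f\,\delta_{\psi(f)}\,w_f'$ exhibits $\pi_1(Z) = \langle S_1\rangle * \psi_*\pi_1(Y)$ as the required free product decomposition.
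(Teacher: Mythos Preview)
Your argument is correct and follows the same strategy as the paper's proof: both study the natural map $(N\cap H)\backslash\mathcal{S}\to N\backslash\mathcal{T}$, identify $\pi_1$ of its domain with $N\cap H$ via the free action, and use that the fundamental group of a connected subgraph is a free factor. The paper is terser and simply asserts that this map is \emph{injective}, then reads off the conclusion.

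Your version is in fact more careful at exactly this point. The argument you give for edge-injectivity of $\psi$ is precisely what justifies the paper's claim on edges, but vertex-injectivity can genuinely fail, since only edge stabilisers of $\mathcal{T}$ are assumed trivial. For instance, take $G=\langle a\rangle*\langle t\rangle$ with $a^2=1$, let $\mathcal{T}$ be the Bass--Serre tree of this HNN decomposition, $H=\langle t\rangle$ with $\mathcal{S}$ its axis, and $N=\langle at\rangle$; then $H\backslash\mathcal{S}\to G\backslash\mathcal{T}$ is an isomorphism and $N$ acts freely, but $N\cap H=1$ while the distinct vertices $A$ and $t^{-1}A$ of $\mathcal{S}$ lie in the same $N$-orbit (since $at\cdot t^{-1}A=aA=A$), so $(N\cap H)\backslash\mathcal{S}\to N\backslash\mathcal{T}$ is not injective on vertices. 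Your factorisation through $N\backslash(N\mathcal{S})$ and the Nielsen argument for fact~(a) correctly absorb such vertex identifications, so your proof actually patches a small imprecision in the paper's.
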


\begin{proof}
The induced map of graphs $(N\cap H)\backslash \mathcal{S} \to N\backslash \mathcal{T}$ is injective since $H\backslash \mathcal{S}\to G\backslash \mathcal{T}$ is. Since $N$ acts freely on $\mathcal{T}$, the quotient graph of groups is a graph of trivial groups. Hence, the fundamental group $\pi_1(N\backslash \mathcal{T})$ of the graph $N\backslash \mathcal{T}$ can be identified with the group $N$. In particular, $N$ is free. Similarly, the image of $\pi_1((N\cap H)\backslash \mathcal{S})$ under the induced map can be identified with $N\cap H$. Hence, since the image of $(N\cap H)\backslash \mathcal{S} \to N\backslash \mathcal{T}$ is a connected subgraph, $N\cap H$ is a free factor of $N$.
\end{proof}

The situation in which we shall need to apply \cref{subtree} is as follows. 

\begin{cor}
\label{free_factor}
Let $G = F(S)*(*_{\alpha}G_{\alpha})$ with $F(S)$ the free group on the set $S$. Let $H = F(S')*(*_{\alpha}G'_{\alpha})\leqslant G$ with $S'\subset S$ and $G_{\alpha}'\leqslant G_{\alpha}$ for each $\alpha$. If $N\leqslant G$ is a subgroup that intersects each conjugate of each $G_{\alpha}$ trivially, then $N$ is a free group and $N\cap H$ is a free factor of $N$.
\end{cor}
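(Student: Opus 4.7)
My plan is to set up the hypotheses of \cref{subtree} directly. I would view $G = F(S) * (*_{\alpha}G_{\alpha})$ as the fundamental group of a graph of groups $\mathcal{G}$ with one central vertex $v_0$ of trivial vertex group, one loop at $v_0$ with trivial edge group for each $s \in S$, and one pendant edge from $v_0$ to a vertex with group $G_\alpha$ (trivial edge group) for each $\alpha$. Letting $\mathcal{T}$ denote its Bass--Serre tree, every edge stabiliser is trivial, while vertex stabilisers are either trivial (on $v_0$-type vertices) or a conjugate of some $G_\alpha$ (on $v_\alpha$-type vertices). The hypothesis on $N$ therefore immediately gives that $N \curvearrowright \mathcal{T}$ is free.

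Next, I would take $\mathcal{S} \subseteq \mathcal{T}$ to be the $H$-invariant subgraph consisting of the $H$-orbit of the base vertex $v_* = 1\cdot \{1\}$, the $H$-orbits of the vertices $G_\alpha$ for each $\alpha$, the $s$-loop edges for $s \in S'$, and all $\alpha$-edges incident to the vertices described above. Using the decomposition $H = F(S') * (*_\alpha G_\alpha')$, one can walk between any two elements of $H$ along these edges, so $\mathcal{S}$ is connected and hence a subtree of $\mathcal{T}$. The quotient $H \backslash \mathcal{S}$ has the same shape as $G \backslash \mathcal{T}$ except that its loops are indexed by $S'$ rather than $S$ (and only edges with $G'_\alpha\neq \{1\}$ are essential), so the induced map $H\backslash \mathcal{S} \to G \backslash \mathcal{T}$ is injective.

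With all the hypotheses in place, \cref{subtree} yields the desired conclusion: $N$ is free and $N \cap H$ is a free factor of $N$. The only subtle point in the argument is verifying the orbit structure of $\mathcal{S}$ and the injectivity of the quotient map, which together reduce to the identities $H \cap F(S) = F(S')$ and $H \cap G_\alpha = G_\alpha'$. Both follow from a direct normal form argument in $G = F(S)*(*_\alpha G_\alpha)$: any normal form expression in $H$ with respect to the decomposition $F(S') * (*_\alpha G_\alpha')$ is also a normal form in $G$, so uniqueness of normal forms in $G$ forces an element of $H$ lying in $F(S)$ (respectively $G_\alpha$) to consist of a single syllable from $F(S')$ (respectively $G_\alpha'$).
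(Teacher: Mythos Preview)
Your proof is correct and follows essentially the same approach as the paper: both realise $G$ as the fundamental group of a star-shaped graph of groups with trivial edge groups, identify an $H$-invariant subtree $\mathcal{S}$ whose quotient embeds in $G\backslash\mathcal{T}$, and then invoke \cref{subtree}. The paper phrases the construction of $\mathcal{S}$ more abstractly (as coming from a morphism of graphs of groups that is an injection on underlying graphs, then passing to the minimal $H$-invariant subtree), whereas you build $\mathcal{S}$ by hand and verify the needed identities $H\cap F(S)=F(S')$, $H\cap G_\alpha=G_\alpha'$ via normal forms; the content is the same.
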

\begin{proof}
The group $G$ is the fundamental group of a graph of groups with trivial edge groups, a loop edge for each $s\in S$ and with a vertex group $G_{\alpha}$ for each $\alpha$. Then $G$ acts on its Bass--Serre tree $\mathcal{T}$ so that each vertex stabiliser is either trivial or conjugate to some $G_{\alpha}$ and each edge stabiliser is trivial. By definition of $H$, the inclusion $H\leqslant G$ can be realised by a morphism of graphs of groups that is an inclusion at the level of graphs. Thus, taking the minimal $H$-invariant subtree $\mathcal{S}\subset \mathcal{T}$, we see that the induced map $H\backslash \mathcal{S}\to G\backslash \mathcal{T}$ is an inclusion at the level of graphs. Now \cref{subtree} applies directly.
\end{proof}
An immediate consequence of \cref{free_factor} is that the map $(N\cap H)_{\ab}\to N_{\ab}$ induced by inclusion is injective. We shall use this fact many times in the sequel.

\section{Group pairs}

\label{sect:gp}

 \subsection{Group pairs and the associated augmentation module}
By a {\bf group pair}, we understand a pair $\mathcal{P} = (G, X)$, where $G$ is a group and $X$ is a non-empty left $G$-set. In this paper, we will often assume that $X$ contains a marked element $x_0 = x_0(X)$ such that the $G$-orbit of $x_0$ is regular. This assumption is not needed for all of our results, but it simplifies the exposition considerably. We put  $X_0=X\setminus G\cdot x_0$.

The  {\bf augmentation ${R}G$-module $\omega_R(X)$} of a group pair $(G,X)$ is defined as the kernel of the canonical $RG$-homomorphism $R[X] \to R$. It is clear that we have that 
$$\omega_R(X)\cong R\otimes_{\Z} \omega_{\Z}(X).$$

Observe that if $X=G\cdot x_0$, then $\omega_R(X)$ is isomorphic to the augmentation ideal $I_{RG}$ of the group ring $RG$.

Let $\mathcal{Q} = (H, Y)$ and $\mathcal{P} = (G, X)$ be two group pairs. A {\bf map} $\kappa : \mathcal{Q} \to \mathcal{P}$ between group pairs is a pair of maps, consisting of a homomorphism $H \to G$ and a map $Y \to X$, both denoted by $\kappa$, sending the marked element of $Y$ to the marked element of $X$ and such that
\[
\kappa(h \cdot y) = \kappa(h) \cdot \kappa(y) \quad \text{for all } h \in H \text{ and } y \in Y.
\]
We denote by $\omega(\kappa)$ the induced map $\omega_R(Y)\to \omega_R(X)$.
If $\kappa$ is injective (in other words, if $H\to G$ and $Y\to X$ are injective), then we say that $\mathcal{Q}$ is a {\bf subpair} of $\mathcal{P}$. 
 
\subsection{An example: 2-complexes and  groups pairs}

A {\bf 2-complex} for us will be a 2-dimension CW-complex in which all attaching maps of 2-cells are immersions. Any 2-dimension CW-complex is homotopy equivalent to such a CW-complex. Following Wilton \cite{Wil24}, a {\bf branched morphism} of 2-complexes $Y\to X$ is a map which sends 0-cells to 0-cells, 1-cells homeomorphically to 1-cells and open 2-cells to open 2-cells via a branched cover with a single branch point in the centre. A branched morphism is a {\bf branched immersion} if it is an immersion (locally injective) away from the branch points in the centre of 2-cells.

There is a natural group pair associated with any finite connected 2-complex $X$, see \cite[Definition 3.2]{Wil24}. It is defined as follows. Let $F = \pi_1(X^{(1)})$ and let $w_1, \ldots, w_n\in F$ be the (conjugacy class representatives of) elements given by the attaching maps of 2-cells in $X$. Then the group pair $\mathcal{P}_X$ is
\[
\mathcal{P}_X = (F_X, \mathcal{A}_X)
\]
where $\mathcal{A}_X = \bigsqcup_{i=1}^nF/\langle w_i\rangle$.

Any branched morphism of finite connected 2-complexes $\phi\colon Y\to X$ gives rise to a natural map of group pairs $\phi_{\#}\colon \mathcal{P}_Y\to \mathcal{P}_X$. The following is a key observation of Wilton \cite[Lemma 3.4]{Wil24}.

\begin{lem}
\label{branched_immersion_subpair}
If $\phi\colon Y\to X$ is a branched immersion of 2-complexes, then the induced map of group pairs $\phi_{\#}\colon \mathcal{P}_Y\to \mathcal{P}_X$ is injective.
\end{lem}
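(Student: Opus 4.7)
The plan is to verify separately that the two components of $\phi_{\#}$ are injective: the group homomorphism $F_Y \to F_X$ and the set map $\mathcal{A}_Y \to \mathcal{A}_X$. Both will follow from the local injectivity built into the definition of a branched immersion.

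For the group-level injectivity, I would observe that the restriction $\phi|_{Y^{(1)}}\colon Y^{(1)} \to X^{(1)}$ is an ordinary graph immersion, because the branch points of a branched immersion lie in the interiors of $2$-cells while a branched morphism sends $1$-cells homeomorphically to $1$-cells. By Stallings' folding theorem this factors as an embedding $\iota\colon Y^{(1)}\hookrightarrow X_Y$ into the cover of $X^{(1)}$ corresponding to $\phi_*(F_Y)\leqslant F_X$; in particular $\phi_*\colon F_Y\to F_X$ is injective, and passing to universal covers yields an $F_Y$-equivariant embedding of trees $\widetilde{Y^{(1)}}\hookrightarrow\widetilde{X^{(1)}}$.

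For the set-level injectivity, I would identify each summand $F/\langle w\rangle$ of $\mathcal{A}_Y$ and $\mathcal{A}_X$ with the set of $F$-translates of a lift of the attaching loop $w$ to the appropriate tree $\widetilde{Y^{(1)}}$ or $\widetilde{X^{(1)}}$. The tree embedding from the previous paragraph sends each lift of $v_j$ in $\widetilde{Y^{(1)}}$ to a lift of $\phi_*(v_j)$ in $\widetilde{X^{(1)}}$, which coincides up to conjugation with the $k_j$-fold iterate of a lift of $w_{i(j)}$, where $k_j$ is the degree of the branched cover $\phi|_{D_j}\colon D_j\to D_{i(j)}$. Injectivity on each individual summand $F_Y/\langle v_j\rangle \to F_X/\langle w_{i(j)}\rangle$ then follows from injectivity of the tree embedding, and the remaining possibility that two distinct $2$-cells $D_j\neq D_{j'}$ of $Y$ mapping to the same $D_i$ of $X$ produce coincident lifts in $\widetilde{X^{(1)}}$ is ruled out by a local count at a shared $1$-cell of $Y^{(1)}$: such a coincidence would force strictly more $2$-cell sides at that $1$-cell in $Y$ than its image in $X$ can accommodate, contradicting the local injectivity of $\phi$.

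The main obstacle I expect lies in the case where an attaching loop $v_j$ is a proper power in $F_Y$: here one must verify that the minimal ``winding'' of the corresponding lift of $w_{i(j)}$ in the cover $X_Y$ is exactly $k_j$ rather than a proper divisor. This is where the local injectivity of $\phi$ at $0$-cells becomes indispensable, since a strictly smaller winding would correspond to a proper root $u$ of $v_j$ whose image $\phi_*(u)$ lies in a conjugate of $\langle w_{i(j)}\rangle$, producing a surplus of $2$-cell corners in the link of some $0$-cell of $Y^{(1)}$ that cannot embed into the link of its image in $X^{(1)}$.
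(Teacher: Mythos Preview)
The paper does not provide its own proof of this lemma: it is stated with a direct citation to \cite[Lemma~3.4]{Wil24} and no argument is given. So there is no ``paper's proof'' to compare against beyond the reference to Wilton.

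Your outline is the natural one and is essentially what Wilton does. The graph-immersion step for injectivity of $F_Y\to F_X$ is entirely standard. For the set-level part, your diagnosis of where the content lies is exactly right: injectivity of each $F_Y/\langle v_j\rangle\to F_X/\langle w_{i(j)}\rangle$ amounts to showing that $\phi_*(F_Y)\cap c_j\langle w_{i(j)}\rangle c_j^{-1}$ is no larger than $\phi_*(\langle v_j\rangle)$, which (since commuting elements of a free group share a root) is only at issue when $v_j$ is a proper power in $F_Y$; and disjointness of the images for distinct $j,j'$ with $i(j)=i(j')$ forces the two attaching loops to be freely homotopic in $Y^{(1)}$. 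In both situations the contradiction with local injectivity is cleanest when read off at a point $p$ in the interior of a $1$-cell rather than at a $0$-cell: the $k_j$ preimages in $\partial D_j$ of a single point of $\partial D_{i(j)}$ over $\phi(p)$ are sent by the attaching map of $D_j$ into $\phi^{-1}(\phi(p))\cap Y^{(1)}$, and if the minimal winding is $m<k_j$ (respectively, if a second face $D_{j'}$ shares the same elevation) then at least two of these local sheets land at the same point of $Y^{(1)}$ but map to the same corner of $D_{i(j)}$ in $\mathrm{lk}_X(\phi(p))$. Phrasing it this way avoids the bookkeeping with higher-valence links at $0$-cells and makes the ``surplus of corners'' count in your last paragraph immediate.

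One small wording issue: your identification of $F/\langle w\rangle$ with ``$F$-translates of a lift of $w$'' is not literally correct when $w$ is a proper power, since the stabiliser of the axis is the maximal root of $w$ rather than $\langle w\rangle$. The argument goes through anyway because you never actually use that bijection --- the elevation/degree analysis you give afterwards is what does the work --- but it would be cleaner to drop that sentence.
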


The converse of \cref{branched_immersion_subpair} is not quite true, one has to consider the more general class of essential maps. See \cite{Wil24} for details.

\subsection{The relation module}
 
Let   $(G,X)$ be  a group pair  with a complete set of $G$-orbit representatives $T\subset X_0$ and a  subset $S\subset G$ such that
\begin{equation}\label{generation}
G = \left\langle S, \bigcup_{t\in T}G_t\right\rangle. 
\end{equation}

The {\bf relation module }  (relative to \cref{generation}) is the kernel of the map
\begin{equation}\label{mapalpha}
\alpha_{S,T}:(\oplus_{s\in S} RG\cdot e_s)\bigoplus( \oplus_{t\in T} {}^GI_{RG_t}\cdot e_t)\to I_{RG}
\end{equation}
that sends $e_s$ to $s-1$ and $e_t$ to 1. Notice that  $\ker \alpha_{S,T}$  is isomorphic to $R\otimes_\Z N_{\ab}$, where $N$ is the kernel of the canonical map 
$$F(S)*(*_{t\in T} G_t)\to G.$$

From the definition of $\omega_R(X)$, we obtain the exact sequence 
 
\begin{equation}\label{firstseq}
0\to   \oplus_{t\in T} {}^GI_{RG_t}\cdot e_t\xrightarrow{\gamma} I_{RG}\cdot e_0\bigoplus( \oplus_{t\in T} RG\cdot e_t)\xrightarrow{\tau_{S,T}} \omega_R(X)\to 0,\end{equation}
where $\gamma(e_t)=e_0+e_t$, $\tau_{S,T}(e_0)=x_0$ and $\tau_{S,T}(e_t)=t-x_0$.   Now consider the exact sequence
\begin{multline*}
0\to (\oplus_{s\in S} RG\cdot e_s)\bigoplus(\oplus_{t\in T} {}^GI_{RG_t}\cdot e_t)\xrightarrow{\gamma} \\ I_{RG}\cdot e_0\bigoplus( \oplus_{t\in T} RG\cdot e_t)\bigoplus(\oplus_{s\in S} RG\cdot e_s)\xrightarrow{\tau_{S,T}}
 \omega_R(X)\to 0,\end{multline*}
  where $\gamma (e_s)= (s-1)e_0-e_s$  and  $\tau_{S,T}(e_s)=(s-1)x_0$. Observe  that the composition of $\gamma$ and the projection on $I_{RG}\cdot e_0$ coincides with the map $\alpha_{S,T}$. Let $\gamma_{S,T}$ be the composition of $\gamma$ and the projection on $( \oplus_{t\in T} RG\cdot e_t)\bigoplus(\oplus_{s\in S} RG\cdot e_s)$. Therefore, we obtain the exact sequence 
\begin{equation}
    \label{relationmoduleseq}
0\to \ker \alpha_{S,T}  \xrightarrow{\gamma_{S,T}}(\oplus_{t\in T} RG\cdot e_t)\bigoplus (\oplus_{s\in S} RG\cdot e_s)\xrightarrow{\tau_{S,T}} \omega_R(X) \to 0. \end{equation}
\begin{pro} 
\label{relation_module1}
Let $\kappa \colon (G,Y) \to (G,X)$ be a map between group pairs that acts as the identity on $G$.  
Let $T_Y$ and $T_X$ be complete sets of $G$-orbit representatives of $Y_0$ and $X_0$, respectively, and assume that $\kappa(T_Y) \subset T_X$.  
Let $S \subset G$ be such that
\[
G = \left\langle S, \;\bigcup_{t \in T_Y} G_t \right\rangle. 
\]

Then there exists a commutative diagram with exact rows:
\[
\begin{tikzcd}[column sep=small, row sep=large]
0 \arrow[r] 
   & \ker \alpha_{S,T_Y} \arrow[r, "\gamma_{S,T_Y}"] \arrow[d] 
   & \bigl(\!\bigoplus_{t \in T_Y} RG \cdot e_t\bigr) \oplus \bigl(\!\bigoplus_{s \in S} RG \cdot e_s\bigr) 
        \arrow[r, "\tau_{S,T_Y}"] \arrow[d, "\delta"] 
   & \omega_R(Y) \arrow[r] \arrow[d, "\omega(\kappa)"] 
   & 0 \\
0 \arrow[r] 
   & \ker \alpha_{S,T_X} \arrow[r, "\gamma_{S,T_X}"] 
   & \bigl(\!\bigoplus_{t \in T_X} RG \cdot e_t\bigr) \oplus \bigl(\!\bigoplus_{s \in S} RG \cdot e_s\bigr) 
        \arrow[r, "\tau_{S,T_X}"] 
   & \omega_R(X) \arrow[r] 
   & 0
\end{tikzcd}
\]

where $\delta(e_s) = e_s$ for $s \in S$, and $\delta(e_t) = e_{\kappa(t)}$ for $t \in T_Y$.

 \end{pro}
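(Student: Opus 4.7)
The plan is to verify the commutativity of the right-hand square by a direct computation on the free $RG$-generators of the middle modules, and then to produce the left vertical map and the commutativity of the left square as a formal consequence of the exactness of both rows.

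For the right square, since all maps in question are $RG$-linear it suffices to check $\omega(\kappa)\circ\tau_{S,T_Y} = \tau_{S,T_X}\circ\delta$ on the distinguished $RG$-generators $\{e_s:s\in S\}\cup\{e_t:t\in T_Y\}$ of the middle module in the top row. For $s\in S$, we have $\tau_{S,T_Y}(e_s)=(s-1)x_0(Y)$; applying $\omega(\kappa)$ and using that $\kappa$ sends the marked element of $Y$ to that of $X$ and acts as the identity on $G$, this yields $(s-1)x_0(X)$. On the other hand, $\tau_{S,T_X}(\delta(e_s))=\tau_{S,T_X}(e_s)=(s-1)x_0(X)$. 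For $t\in T_Y$, we have $\tau_{S,T_Y}(e_t)=t-x_0(Y)$, so $\omega(\kappa)(\tau_{S,T_Y}(e_t))=\kappa(t)-x_0(X)$; since $\kappa(t)\in T_X$ by hypothesis, $\tau_{S,T_X}(\delta(e_t))=\tau_{S,T_X}(e_{\kappa(t)})=\kappa(t)-x_0(X)$.

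For the left vertical map, the commutativity of the right square implies that $\delta$ carries $\ker\tau_{S,T_Y}$ into $\ker\tau_{S,T_X}$. The short exact sequence (\ref{relationmoduleseq}) identifies these kernels with $\ker\alpha_{S,T_Y}$ and $\ker\alpha_{S,T_X}$ via the injections $\gamma_{S,T_Y}$ and $\gamma_{S,T_X}$ respectively. The desired left vertical arrow is then defined as the unique map making the left square commute, and its existence is immediate from this identification (equivalently, from a standard diagram chase).

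There is no real obstacle here: the whole argument reduces to a verification on generators combined with a formal consequence of exactness. The only care required is to invoke each hypothesis at the right moment — $\kappa$ acting as the identity on $G$ makes $\omega(\kappa)$ $G$-equivariant so that $(s-1)x$ transports to $(s-1)\kappa(x)$; the normalisation $\kappa(x_0(Y))=x_0(X)$ is what makes the $e_s$ computation close up; and the inclusion $\kappa(T_Y)\subset T_X$ is what guarantees that $e_{\kappa(t)}$ is a genuine basis element of the bottom module, making $\delta$ well-defined and the $e_t$ computation valid.
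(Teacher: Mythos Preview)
Your proof is correct and is essentially a fleshed-out version of the paper's one-line argument, which simply states that the commutativity follows from the construction of the maps. Your explicit verification on generators and the formal deduction of the left vertical map from exactness are exactly what is implicit in that sentence.
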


 \begin{proof} The commutativity of the diagram follows from
the construction of the maps in the diagram.
 \end{proof}
\begin{rem}

The map $\ker \alpha_{S,T_Y}\to\ker \alpha_{S,T_X}$ obtained in Lemma \ref{relation_module1} can be also understood in the following way. Observe that $G_{t}\le G_{\kappa(t)}$ for every $t\in T_Y$. Thus, we have the commutative diagram
 
\[
\begin{tikzcd}
1 \arrow[r] & K \arrow[r] \arrow[d] & F(S)*(*_{t\in T_Y} G_t) \arrow[r] \arrow[d] & G \arrow[r] \arrow[d, "\Id"] & 1 \\
1 \arrow[r] & N \arrow[r] & F(S)*(*_{t\in T_X} G_t) \arrow[r] & G \arrow[r] & 1
\end{tikzcd}.
\]
The first vertical arrow induces the map $R\otimes_{\Z} K_{\ab}\to R\otimes_{\Z} N_{\ab}$. The natural identification of $R\otimes_{\Z} K_{\ab}$ with $\ker \alpha_{S,T_Y}$ and $R\otimes_{\Z} N_{\ab}$ with $\ker \alpha_{S,T_X}$ induces the map $\ker \alpha_{S,T_Y}\to\ker \alpha_{S,T_X}$ obtained in Lemma \ref{relation_module1}.

The importance of  Lemma \ref{relation_module1} is that we can see the map $R\otimes_{\Z} K_{\ab}\to R\otimes_{\Z} N_{\ab}$ as a restriction of a map between two free  $RG$-modules. 
\end{rem}
\subsection{Cohomological dimension of group pairs}
Following Alonso \cite{Al91}, the $R$-\textbf{cohomological dimension} of the pair $(G, X)$ is defined as  
\[
\cd_R(G, X) = \prd_{RG}(\omega_R(X)) + 1.
\]  

The following theorem arises from a result of Dicks \cite{Di80} when $X = G/H$ (the finitely generated case is due to Dunwoody \cite{Du79}). Alonso gave a different proof of it for $R=\mathbb{Z}$ in \cite[Theorem~3]{Al91}.

\begin{teo}[\cite{dicks_89}, Theorems~IV.4.8 and IV.4.11]\label{cd=1}
   Let $(G,X)$ be a group pair. Then 
   \[
   \cd_{R}(G, X) = 1 
   \]
   if and only if for all distinct $x, y\in X$, $|G_x\cap G_y|$ is invertible in $R$ and there exists a $G$-tree $\mathcal{T}$ with finite edge stabilisers having $X$ as a $G$-subset of $V(\mathcal{T})$ such that for every $v\in V(\mathcal{T}) \setminus X$, the highest common factor of 
   \[
   \bigl\{\,|G_v : G_u \cap G_v| : u \in X \,\bigr\}
   \]
   is invertible in $R$ (and, in particular, $G_v$ is finite).

   In particular, 
   \[
   \cd_{\mathbb{Z}}(G, X) = 1 
   \quad \Longleftrightarrow \quad
   G \cong F * \Bigl(*_{t\in T} G_t\Bigr),
   \]
   for some free group $F$ and some complete set of $G$-orbit representatives $T \subset X$. 
\end{teo}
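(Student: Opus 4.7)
The plan is to use the reformulation
\[
\cd_R(G,X) = 1 \iff \prd_{RG}(\omega_R(X)) = 0 \iff \omega_R(X) \text{ is } RG\text{-projective.}
\]
The two implications then call for different techniques.

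For the backward direction, given a $G$-tree $\mathcal{T}$ with the stated properties I would form its augmented cellular chain complex
\[
0 \to R[E^+(\mathcal{T})] \to R[V(\mathcal{T})] \to R \to 0,
\]
which is exact because $\mathcal{T}$ is a tree. Since $X$ is a $G$-subset of $V(\mathcal{T})$, the middle term splits as $R[X] \oplus R[V(\mathcal{T}) \setminus X]$; composing with the augmentation $R[X] \to R$ yields the short exact sequence
\[
0 \to R[E^+(\mathcal{T})] \to R[V(\mathcal{T})\setminus X] \to \omega_R(X) \to 0.
\]
Decomposing the two left-hand terms into $G$-orbits, each summand has the form $RG \otimes_{RG_v} R$ where $G_v$ is either an edge stabiliser or an off-$X$ vertex stabiliser. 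Such an induced module is $RG$-projective whenever the trivial $RG_v$-module $R$ is $RG_v$-projective; the hypothesis that $|G_x \cap G_y|$ is invertible in $R$ for all distinct $x,y \in X$ takes care of edge stabilisers (viewed as intersections of the stabilisers of the endpoints), while the hcf-of-indices condition on the off-$X$ vertices produces a Maschke-type idempotent in $RG_v$ exhibiting $R$ as a summand of $RG_v$. This gives a length-one projective resolution of $\omega_R(X)$ and so $\cd_R(G,X) \le 1$.

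For the forward direction, which is the genuinely hard part, I would invoke the track/pattern machinery of Dunwoody and Dicks--Dunwoody. Starting from an $RG$-projective splitting of some surjection onto $\omega_R(X)$, one extracts a $G$-invariant family of \emph{nearly invariant} subsets of $X$ indexed by the coordinates of the splitting; the boundaries of these subsets form a nested $G$-set which, by Dunwoody's construction, can be assembled into a $G$-tree $\mathcal{T}$ whose vertex set contains $X$ as a sub-$G$-set. Reading off the construction, each edge of $\mathcal{T}$ corresponds to a relation between two points of $X$ whose common stabiliser is finite with invertible order in $R$, and each vertex $v$ of $V(\mathcal{T})\setminus X$ arises from an ``orbit coincidence'' whose stabiliser $G_v$ satisfies the hcf-of-indices invertibility condition by a $K_0$-theoretic bookkeeping argument comparing the class of $\omega_R(X)$ with the alternating sum of the classes $[RG\otimes_{RG_v}R]$ and $[RG\otimes_{RG_e}R]$ coming from the sequence of the previous paragraph.

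The main obstacle is this forward direction: passing from abstract projectivity of $\omega_R(X)$ to an honest $G$-tree with $X$ literally sitting inside the vertex set, and with the delicate hcf-of-indices condition on the auxiliary vertices, requires the full strength of Dunwoody--Dicks accessibility theory. For the ``in particular'' statement over $R = \Z$, since $\Z^\times = \{\pm 1\}$, the invertibility conditions force all edge stabilisers to be trivial and all off-$X$ vertex stabilisers to be trivial as well; the tree collapses to a graph of groups with trivial edge groups whose vertex groups are precisely the $G_t$ for $t$ in a complete set of $G$-orbit representatives of $X$, yielding the free product decomposition $G \cong F \ast (\ast_{t \in T} G_t)$ by Bass--Serre theory.
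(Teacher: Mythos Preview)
First, note that the paper does not prove this theorem: it is quoted from Dicks with a reference, so there is no in-paper argument to compare your sketch against. What follows is an assessment of your sketch on its own terms.

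Your backward direction contains a concrete error. From the augmented chain complex of $\mathcal{T}$ one has $\partial\colon R[E^+(\mathcal{T})]\xrightarrow{\sim}\omega_R(V(\mathcal{T}))$, and since $\omega_R(X)=\omega_R(V)\cap R[X]$ while $\omega_R(V)\twoheadrightarrow R[V\setminus X]$, the sequence that actually comes out is
\[
0 \longrightarrow \omega_R(X) \longrightarrow R[E^+(\mathcal{T})] \longrightarrow R[V(\mathcal{T})\setminus X] \longrightarrow 0,
\]
not the one you wrote (there is no natural surjection $R[V\setminus X]\twoheadrightarrow\omega_R(X)$). More importantly, even if your sequence were correct, a length-one projective resolution would only give $\prd_{RG}\omega_R(X)\le 1$, hence $\cd_R(G,X)\le 2$ by the paper's convention $\cd_R(G,X)=\prd_{RG}\omega_R(X)+1$; you need $\omega_R(X)$ \emph{projective}. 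With the corrected sequence the right move is to show that it \emph{splits} and that $R[E^+]$ is projective, so that $\omega_R(X)$ is a summand of a projective.

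There is a second gap in how you read the hypotheses. The hcf-of-indices condition on an off-$X$ vertex $v$ does \emph{not} produce an idempotent in $RG_v$ making $R$ a summand of $RG_v$: that would need $|G_v|\in R^\times$, which is not assumed. What it does give is integers $a_u$ with $\sum_u a_u\,[G_v:G_u\cap G_v]$ a unit, hence an $RG_v$-section $R\to\bigoplus_u R[G_v/(G_u\cap G_v)]$ of the augmentation; globalised, this yields an $RG$-map $R[V]\to R[X]$ over the augmentations, and therefore a retraction $\omega_R(V)\to\omega_R(X)$, which is exactly the splitting you need above. Likewise, invertibility of $|G_x\cap G_y|$ for $x,y\in X$ does not by itself handle edges with an endpoint outside $X$; you must combine it with the tree geometry (stabilisers of geodesics) and the finiteness of the off-$X$ vertex groups. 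Your forward-direction sketch and the $R=\Z$ specialisation are in the right spirit.
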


The main example in this paper is the following. 
\begin{lem}\label{onerelatorpair} 
    Let $A$ and $B$ be two locally indicable groups, and let $u \in A * B$ be an element that is neither conjugated to an element in $A$ or $B$ nor a proper power. Let $n\in \N$ and $G = \frac{A * B}{\normal{w}}$ with $w = u^n$.  Define
    \[
    X = G \cdot x_0 \sqcup G / A \sqcup G / B \sqcup G / \langle u\rangle.
    \]
    Then $\omega_{\mathbb Z}(X)$ is a one-relator $\Z G$-module and $\operatorname{cd}_{\mathbb{Z}}(G, X) \leq 2$.
\end{lem}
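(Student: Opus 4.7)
My plan is to construct a length-one projective resolution
\[
0 \longrightarrow \mathbb{Z}G \longrightarrow (\mathbb{Z}G)^{3} \longrightarrow \omega_{\mathbb{Z}}(X) \longrightarrow 0,
\]
which would prove both assertions at once: the right-hand map exhibits $\omega_{\mathbb{Z}}(X)$ as the cokernel of a map from a cyclic free submodule (so it is one-relator), and the resolution has length one (so $\cd_{\mathbb{Z}}(G,X) = \prd_{\mathbb{Z}G}(\omega_{\mathbb{Z}}(X))+1 \leq 2$).

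I begin by applying the relation module exact sequence \eqref{relationmoduleseq} with $S=\emptyset$ and $T = \{1\cdot A,\;1\cdot B,\;1\cdot \langle u\rangle\}$ (valid since $A$, $B$, and $\langle u\rangle$ generate $G$), yielding
\[
0 \longrightarrow \ker\alpha_{\emptyset,T} \longrightarrow (\mathbb{Z}G)^{3} \xrightarrow{\;\tau\;} \omega_{\mathbb{Z}}(X) \longrightarrow 0.
\]
By the description preceding \eqref{mapalpha}, $\ker\alpha_{\emptyset,T} \cong \mathbb{Z}\otimes_{\mathbb{Z}} N'_{\ab}$, where $N' = \ker\phi$ for the natural map $\phi\colon F' := A * B * \langle c\rangle \twoheadrightarrow G$, with $\langle c\rangle$ cyclic of order $n$ and $c\mapsto u$. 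Direct inspection shows $N' = \normal{cu^{-1}}_{F'}$: modding $F'$ out by $cu^{-1}$ eliminates $c$ in favour of $u$ and carries the torsion relation $c^n=1$ over to $u^n=1$, yielding exactly $G$. Hence $N'_{\ab}$ is a cyclic $\mathbb{Z}G$-module, so $\omega_{\mathbb{Z}}(X)$ is automatically a one-relator module; what remains is to upgrade this to the freeness $N'_{\ab}\cong \mathbb{Z}G$.

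To do this, I would compare $(G, X)$ with the smaller pair $(G, Y)$, where $Y = G\cdot x_0 \sqcup G/A \sqcup G/B$ omits the orbit $G/\langle u\rangle$. For $(G,Y)$, Howie's identity theorem (\cref{identity_theorem}) applied to $G = (A*B)/\normal{u^n}$ directly gives that the analogous relation module is $N_{\ab} \cong \mathbb{Z}[G/\langle u\rangle]$, where $N = \normal{u^n}_{A*B}$. Applying \cref{relation_module1} to the inclusion of pairs $(G,Y)\hookrightarrow (G,X)$ produces a commutative diagram of relation module sequences whose middle column is the inclusion $(\mathbb{Z}G)^2\hookrightarrow (\mathbb{Z}G)^3$, and the snake lemma then yields an exact sequence
\[
0 \longrightarrow \mathbb{Z}[G/\langle u\rangle] \longrightarrow N'_{\ab} \longrightarrow \mathbb{Z}G\cdot I_{\mathbb{Z}\langle u\rangle} \longrightarrow 0.
\]
The key step is to recognise this extension as the Swan-type extension $0 \to \mathbb{Z}[G/\langle u\rangle] \xrightarrow{\,\cdot N_u\,} \mathbb{Z}G \xrightarrow{\,\cdot(u-1)\,} \mathbb{Z}G\cdot I_{\mathbb{Z}\langle u\rangle} \to 0$, where $N_u = 1 + u + \cdots + u^{n-1}$, which would force $N'_{\ab}\cong \mathbb{Z}G$.

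The main obstacle is this rigorous matching of extension classes. A cleaner alternative I would try in parallel is to directly establish the Cohen--Lyndon property for the relator $cu^{-1}\in F'$: even though
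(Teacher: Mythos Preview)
Your setup is correct and matches the paper's: both reduce to showing that $\ker\alpha_{\emptyset,T}\cong N'_{\ab}$ is free of rank one, where $N'=\ker\bigl(A*B*\langle c\mid c^n\rangle\to G\bigr)$. Your observation that $N'=\normal{cu^{-1}}$ (hence $N'_{\ab}$ is cyclic, hence $\omega_{\Z}(X)$ is one-relator) is clean and valid; the paper instead exhibits the explicit generator $\gamma=(\bar a,\bar b,1-u)$ of $\ker\alpha$, which is nothing other than the image of your $[cu^{-1}]$ under the embedding $N'_{\ab}\hookrightarrow(\Z G)^3$ (up to sign).

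The genuine gap is the freeness step. You correctly derive the extension
\[
0\longrightarrow \Z[G/\langle u\rangle]\longrightarrow N'_{\ab}\longrightarrow {}^GI_{\Z\langle u\rangle}\longrightarrow 0
\]
via the snake lemma, but then stop at ``matching extension classes'', and the proposal is cut off mid-sentence. Note that $\Ext^1_{\Z G}\bigl({}^GI_{\Z\langle u\rangle},\Z[G/\langle u\rangle]\bigr)\cong\Z/n\Z$, so abstract extension-class considerations alone will not determine $N'_{\ab}$; you need to compute. The missing computation is short: the third coordinate of $[cu^{-1}]$ in $(\Z G)^3$ is $u-1$, so the surjection $\Z G\twoheadrightarrow N'_{\ab}\twoheadrightarrow {}^GI_{\Z\langle u\rangle}$ is $1\mapsto u-1$, with kernel $\Z G\cdot N_u$. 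On this kernel, $N_u\cdot[cu^{-1}]=\bigl[\prod_{i=0}^{n-1}u^i(cu^{-1})u^{-i}\bigr]=[u^{-n}]$, which is (minus) the generator of $N_{\ab}\cong\Z[G/\langle u\rangle]$. Hence $\Z G\cdot N_u\to\Z[G/\langle u\rangle]$ is an isomorphism and $\Z G\cong N'_{\ab}$.

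This is exactly the content of the paper's annihilator computation for $\gamma$, just repackaged: the paper shows $r\gamma=0$ forces $r\in\Z G(u-1)\cap\Z G\cdot N_u=0$, which is the same pair of conditions. So once completed, your route and the paper's coincide; the paper's presentation is simply more direct, bypassing the snake lemma by working with $\gamma$ from the start.
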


\begin{proof}
From \cref{relationmoduleseq}, it follows that we need to establish that the kernel of the canonical map  
\[
\alpha: {}^G I_{\mathbb{Z} A} \oplus {}^G I_{\mathbb{Z} B} \oplus {}^G I_{\mathbb{Z} \langle u \rangle} \longrightarrow I_{\mathbb{Z} G}
\]
is free of rank $1$.

Consider the free product $\widetilde{G} = A * B$, and let $a \in {}^{\widetilde{G}} I_{\mathbb{Z} A}$ and $b \in {}^{\widetilde{G}} I_{\mathbb{Z} B}$ be the unique elements such that
\[
u - 1 = a + b.
\]
Let $\overline{a}$ and $\overline{b}$ denote the images of $a$ and $b$ in $\mathbb{Z}G$, respectively. Then the element
\[
\gamma = (\overline{a}, \overline{b}, 1 - u)
\]
belongs to $\ker \alpha$.

On the other hand, if $(a', b', c(u - 1)) \in \ker \alpha$, then 
\[
(a' + c\overline{a},\, b' + c\overline{b},\, 0) \in \ker \alpha,
\]
and so, by \cref{identity_theorem}, there is some $\beta \in \Z G$ such that $\beta \cdot u = \beta$ and 
\[
(a' + c\overline{a},\, b' + c\overline{b}) = \beta \cdot (\overline{a}, \overline{b}).
\]
Thus, $(a', b', c(u - 1)) \in \mathbb{Z}G \cdot \gamma$. Hence, $\ker \alpha = \mathbb{Z}G \cdot \gamma$.

Now assume $r \cdot \gamma = 0$ for some $r \in \mathbb{Z}G$. Since $r(\overline{a}, \overline{b}) = 0$, \cref{identity_theorem} implies that $r \in \mathbb{Z}G \cdot (u - 1)$, and since $r(u - 1) = 0$, we deduce that
\[
r \in \mathbb{Z}G \cdot (1 + u + \cdots + u^{n - 1}).
\]
Therefore, $r = 0$, and so $\ker \alpha$ is free of rank $1$.
\end{proof}

 \subsection{Finiteness properties for group pairs}

A group pair $(G, X)$ is {\bf finitely generated} if $G\backslash X$ is finite and there is a complete set of $G$-orbit representatives $T\subset X_0$ and finite subset $S\subset G$ such that
\[
G = \left\langle S, \bigcup_{t\in T}G_t\right\rangle .
\]

We say that the pair $(G, X)$ is {\bf finitely presented} if it is finitely generated as above and there is a finite subset $U\subset F(S)*(*_{t\in T}G_t)$ such that
\[
G \cong F(S)*(*_{t\in T}G_t)/\normal{U}.
\]
Note that if the group pair $(G, X)$ is finitely generated (respectively, finitely presented) and $G_x $ is finitely generated (respectively, finitely presented) for each $x\in X$, then $G$ itself is finitely generated (respectively, finitely presented).

For $n \geq 1 $ we say that a group pair $(G, X)$ is of {\bf type $\FP_n(R)$} if the $R G$-module $\omega_R(X)$ has type $\FP_{n-1}(R)$. 

We shall first need to convert some standard facts about groups to facts about group pairs.

\begin{lem}
\label{fg} Let $R$ be a ring.
The following are equivalent:
\begin{enumerate}
\item $(G, X)$ is finitely generated.
\item $(G, X)$ has type $\FP_1(R)$.
\end{enumerate}
\end{lem}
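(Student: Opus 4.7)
The plan is to identify finite generation of the pair with a concrete surjectivity condition on the map $\tau_{S,T}$ from the exact sequence \eqref{relationmoduleseq}. Specifically, I claim that if $T\subset X_0$ is a complete set of $G$-orbit representatives, then $\tau_{S,T}$ is surjective if and only if $G = \langle S, \bigcup_{t\in T}G_t \rangle$. Granting this characterization, $(1)\Rightarrow(2)$ is immediate: finite $S, T$ as in the definition produce a surjection onto $\omega_R(X)$ from a finitely generated free $RG$-module. For $(2)\Rightarrow(1)$, I would first observe that if $\omega_R(X)$ is finitely generated by $\eta_1,\ldots,\eta_k$, then every element $x - x_0$ lies in the $RG$-span of the $\eta_i$, so every $x\in X$ lies in the finite union of orbits $G\cdot \supp(\eta_i)$; hence $G\backslash X$ is finite. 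Pick a finite $T\subset X_0$ of orbit representatives and take $S_0 = G$, so $\tau_{S_0, T}$ is surjective. Each $\eta_i$ is expressible using only finitely many $e_s$'s, giving a finite $S_1\subset G$ for which $\tau_{S_1, T}$ remains surjective; the characterization then yields $G = \langle S_1, \bigcup_t G_t\rangle$.

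To prove the characterization, I would use the $RG$-decomposition $R[X] = RG\cdot x_0 \oplus \bigoplus_{t\in T} R[G/G_t]$. For the ``if'' direction, it suffices to exhibit $(g-1)x_0$ and $x - x_0$ in the image of $\tau_{S,T}$ for all $g\in G$ and $x\in X$. Writing $g$ as a word in $S^{\pm 1}\cup \bigcup_t G_t$ and telescoping reduces to single letters: $(s-1)x_0 = \tau_{S,T}(e_s)$, and for $g_0\in G_t$ the identity $(g_0-1)x_0 = (g_0-1)(t-x_0) = \tau_{S,T}((g_0-1)e_t)$ follows from $g_0 t = t$. Then $gt - x_0 = g(t-x_0) + (g-1)x_0$ completes the argument.

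For the ``only if'' direction, suppose $\tau_{S,T}$ is surjective and fix $g\in G$. Expressing $(g-1)x_0 = \sum_s r_s(s-1)x_0 + \sum_t r_t(t-x_0)$ and projecting onto each summand of the decomposition of $R[X]$, one finds $r_t\in RG\cdot I_{RG_t}$ from the $R[G/G_t]$ components, and $g - 1 = \sum_s r_s(s-1) - \sum_t r_t$ from the $RG\cdot x_0$ component. Setting $H = \langle S, \bigcup_t G_t\rangle$, a short word-length induction gives $\sum_s RG(s-1) + \sum_t RG\cdot I_{RG_t} = RG\cdot I_{RH}$, which is precisely the kernel of the natural $RG$-surjection $RG\to R[G/H]$. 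Hence $g - 1$ maps to $0$, so $gH = H$ and $g\in H$. The main obstacle I anticipate is this ``only if'' step, where one must weave together the orbit decomposition of $R[X]$ with the augmentation-ideal kernel computation; the other direction and the passage from the characterization to the two implications of the lemma are essentially bookkeeping.
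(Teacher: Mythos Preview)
Your proposal is correct and follows essentially the same approach as the paper: both arguments hinge on the fact that $\{t-x_0,\,(s-1)x_0 : t\in T,\, s\in S\}$ generates $\omega_R(X)$ as an $RG$-module if and only if $G=\langle S,\bigcup_t G_t\rangle$, which you rephrase as surjectivity of $\tau_{S,T}$. The paper simply asserts both directions of this equivalence (invoking the classical case $X=G\cdot x_0$ and saying the general case is ``almost identical''), whereas you actually supply the details---the telescoping for the ``if'' direction and the projection onto the summands of $R[X]=RG\cdot x_0\oplus\bigoplus_t R[G/G_t]$ together with the identification $\sum_s RG(s-1)+\sum_t RG\cdot I_{RG_t}=RG\cdot I_{RH}$ for the ``only if'' direction. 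One cosmetic slip: for $g_0\in G_t$ you have $(g_0-1)(t-x_0)=-(g_0-1)x_0$, not $(g_0-1)x_0$; the sign is irrelevant to the conclusion.
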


\begin{proof}
Note that if $X$ consists of a single regular orbit, then $\omega_R(X) \cong I_{RG}$, where $I_{RG}\leqslant R G$ is the augmentation ideal. In this case, the argument is standard and the result can be found in \cite{Br82}. The argument for the general case is almost identical.

If $(G, X)$ is finitely generated, let $T\subset X_0$ be a complete set of $G$-orbit representatives and let $S\subset G$ be a finite set such that $G = \left\langle S, \bigcup_{t\in T}G_t\right\rangle$. Then $\omega_R(X)\leqslant R[X]$ is finitely generated as an $R G$-module by the elements $$\{t - x_0, (s-1)\cdot x_0 \mid t \in T , s\in S \}.$$  

Conversely, if $\omega_R(X)$ is finitely generated, then it is generated by a finite subset 
$$\Sigma\subset\{t - x_0, (g-1)\cdot x_0 \mid t \in T , g\in G\}$$ with $T\subset X_0$ a complete set of $G$-orbit representatives. This immediately implies that $T$, and hence $G\backslash X$, is finite. Since $\Sigma$ is finite, there is a finite subset $S\subset G$ such that $\Sigma\subset  \{t - x_0, (s-1)\cdot x_0 \mid t \in T , s\in S\}$. Thus $G = \left\langle S, \bigcup_{t\in T}G_t\right\rangle$, and so $(G, X)$ is finitely generated.
\end{proof}

From \cref{relationmoduleseq} we obtain:
 
\begin{cor}
\label{relation_module2}
Let $(G, X)$ be a finitely generated group pair, let $T\subset X$ be a complete set of $G$-orbit representatives and let $S\subset G$ be a finite subset so that $G = \langle S,  \bigcup_{t\in T}G_t\rangle$. The following are equivalent:
\begin{enumerate}
\item $(G, X)$ has type $\FP_2(R)$.
\item If $N = \ker(F(S)*(*_{t\in T}G_t)\to G)$, then $R\otimes_{\Z} N_{\ab}$ is finitely generated as a $R G$-module.
\end{enumerate}
\end{cor}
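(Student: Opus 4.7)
The plan is a direct unpacking of definitions combined with the exact sequence already constructed in \eqref{relationmoduleseq}. Unfolding the definition of finiteness type for a group pair, condition (1) is just the statement that $\omega_R(X)$ is of type $\FP_1(R)$ as an $RG$-module; equivalently, $\omega_R(X)$ admits a partial resolution
\[
Q_1 \to Q_0 \to \omega_R(X) \to 0
\]
with $Q_0, Q_1$ finitely generated projective $RG$-modules.

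Next, I would use the exact sequence \eqref{relationmoduleseq} with the given generating data. Since $T$ and $S$ are both finite (by hypothesis, using \cref{fg}), the middle term
\[
P := \Bigl(\bigoplus_{t\in T} RG\cdot e_t\Bigr)\oplus \Bigl(\bigoplus_{s\in S} RG\cdot e_s\Bigr)
\]
is a finitely generated free $RG$-module, and the sequence exhibits $\omega_R(X)$ as a quotient $P \twoheadrightarrow \omega_R(X)$ with kernel $\ker \alpha_{S,T}$.

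Then I would invoke the standard Schanuel-type criterion (e.g.\ as in \cite{Bieri_Notes}): an $RG$-module $M$ is of type $\FP_1(R)$ if and only if, for some (equivalently, every) surjection from a finitely generated projective $RG$-module $P \twoheadrightarrow M$, the kernel is finitely generated. Applied to our presentation, this gives that $\omega_R(X)$ is $\FP_1(R)$ if and only if $\ker \alpha_{S,T}$ is finitely generated as an $RG$-module. Finally, the identification $\ker \alpha_{S,T} \cong R\otimes_\Z N_{\ab}$ recorded immediately after the definition of the relation module converts this into the statement of (2), yielding the equivalence.

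The argument is essentially routine; there is no genuine obstacle. The only mild subtlety is to note that the Schanuel-type characterisation of $\FP_1$ lets one pass between presentations: the definition of $\FP_2(R)$ only posits the \emph{existence} of some finitely generated projective presentation of $\omega_R(X)$, whereas our sequence uses the specific presentation coming from $S$ and $T$, and one must observe that these two conditions agree precisely because $P$ is already finitely generated projective.
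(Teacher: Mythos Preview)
Your proof is correct and takes essentially the same approach as the paper, which simply states that the corollary follows from \eqref{relationmoduleseq}. You have correctly unpacked what this means: the middle term of \eqref{relationmoduleseq} is finitely generated free (since $S$ and $T$ are finite), the kernel is identified with $R\otimes_{\Z} N_{\ab}$, and the standard Schanuel-type characterisation of $\FP_1$ gives the equivalence.
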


From \cref{firstseq} we get two convenient results.

\begin{lem}
\label{fg_stabs}
Let $(G, X)$ be a group pair of type $\FP_2(R)$. The following are equivalent:
\begin{enumerate}
\item $G$ is finitely generated.
\item $G_x $ is finitely generated for each $x\in X$.
\end{enumerate}
\end{lem}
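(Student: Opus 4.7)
The plan is to read off both implications from the short exact sequence \eqref{firstseq}
\[
0\to \bigoplus_{t\in T} {}^GI_{RG_t}\cdot e_t \to I_{RG}\cdot e_0\oplus \bigoplus_{t\in T} RG\cdot e_t \to \omega_R(X)\to 0.
\]
Since $(G,X)$ is of type $\FP_2(R)$, the module $\omega_R(X)$ is of type $\FP_1(R)$ (finitely presented), and in particular of type $\FP_0(R)$, so by \cref{fg} the pair $(G,X)$ is finitely generated: $T$ may be taken finite and there exists a finite $S\subset G$ with $G=\langle S, \bigcup_{t\in T}G_t\rangle$.

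The direction $(2)\Rightarrow(1)$ is then immediate: every stabiliser $G_x$ is either trivial (if $x=x_0$) or conjugate to some $G_t$ with $t\in T$, and combining $S$ with finite generating sets for the finitely many $G_t$ yields a finite generating set for $G$.

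For the converse $(1)\Rightarrow(2)$, assume $G$ is finitely generated. Then $I_{RG}$ is finitely generated as an $RG$-module, so the middle term of the sequence above is finitely generated. Because $\omega_R(X)$ is finitely presented, the standard fact that a finitely generated surjection onto a finitely presented module has finitely generated kernel shows that $\bigoplus_{t\in T}{}^GI_{RG_t}$ is finitely generated, and since $T$ is finite each summand ${}^GI_{RG_t}$ is itself finitely generated as an $RG$-module. The main step—really the only point needing a small argument—is that finite generation of ${}^GI_{RG_t}=RG\otimes_{RG_t}I_{RG_t}$ over $RG$ forces finite generation of $I_{RG_t}$ over $RG_t$, and hence of $G_t$. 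I would prove this by reducing any finite $RG$-generating set to one of the form $\{1\otimes m_i\}$ (using $g\otimes m=g\cdot(1\otimes m)$) and then projecting onto the $1\otimes I_{RG_t}$ summand of the right $RG_t$-module decomposition $RG=\bigoplus_{gG_t}g\cdot RG_t$; this projection is $RG_t$-linear on the relevant summand and shows that $\{m_i\}$ generates $I_{RG_t}$ over $RG_t$. Since every stabiliser is conjugate to some $G_t$ or trivial, all stabilisers are then finitely generated.
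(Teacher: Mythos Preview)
Your proof is correct and follows essentially the same route as the paper: both directions are read off from the exact sequence \eqref{firstseq}, using that $\omega_R(X)$ is finitely presented and $T$ is finite to conclude that the kernel $\bigoplus_{t\in T}{}^GI_{RG_t}$ is finitely generated when $G$ is. The paper's proof is terser and simply asserts that finite generation of this direct sum implies finite generation of each $G_t$; you spell out the passage from ${}^GI_{RG_t}$ finitely generated over $RG$ to $I_{RG_t}$ finitely generated over $RG_t$, which is a welcome bit of extra care.
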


\begin{proof} 
If $G_x $ is finitely generated for each $x\in X$, then $G$ is finitely generated by \cref{fg}. Now suppose that $G$ is finitely generated. 

Consider the exact sequence \cref{firstseq}. By assumption, $\omega_R(X)$ is finitely presented and $T$ is  finite. Therefore, if $G$ is finitely generated, then the kernel $\oplus_{t\in T} {}^GI_{RG_t}\cdot e_t$ is finitely generated. Thus the groups  $G_t$ are  finitely generated as well.
\end{proof}

The second result upgrades $\FP_2(R)$ for group pairs to $\FP_2(R)$ for the group itself provided that all $G_t$ are of type $\FP_2(R)$.

\begin{cor}
\label{fp2_pair_to_fp2}
Let $(G, X)$ be a group pair of type $\FP_2(R)$. If $G_x $ has type $\FP_2(R)$ for each $x\in X$, then $G$ has type $\FP_2(R)$.
\end{cor}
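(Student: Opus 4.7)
My approach is to work with the exact sequence from \cref{mapalpha} expressing $I_{RG}$ as a quotient of the relation module, since this directly exhibits a finite presentation of $I_{RG}$ once the finiteness of the stabilisers is known. First, each stabiliser $G_x$ being of type $\FP_2(R)$ is, in particular, finitely generated, so combined with the hypothesis that $(G,X)$ is of type $\FP_2(R)$, \cref{fg_stabs} yields that $G$ itself is finitely generated. Moreover, by \cref{fg} the pair is finitely generated, so I may choose a finite complete set of $G$-orbit representatives $T\subset X_0$ and a finite subset $S\subset G$ with $G=\langle S, \bigcup_{t\in T} G_t\rangle$.

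With these choices fixed, I would consider the exact sequence
\[
0 \to \ker\alpha_{S,T} \to \Bigl(\bigoplus_{s\in S} RG\cdot e_s\Bigr)\oplus\Bigl(\bigoplus_{t\in T} {}^G I_{RG_t}\cdot e_t\Bigr) \xrightarrow{\alpha_{S,T}} I_{RG} \to 0.
\]
By \cref{relation_module2}, $\ker\alpha_{S,T}$ is finitely generated over $RG$. On the other hand, $G_t$ being of type $\FP_2(R)$ forces $I_{RG_t}$ to be of type $\FP_1(R)$ over $RG_t$; since induction $RG\otimes_{RG_t}(-)$ is exact and takes finitely generated projectives to finitely generated projectives, each ${}^G I_{RG_t}$ is of type $\FP_1(R)$ as an $RG$-module, and a finite direct sum of such modules is again of type $\FP_1(R)$. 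Hence the middle term of the displayed sequence is of type $\FP_1(R)$.

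Finally, I would invoke the standard fact that in a short exact sequence $0\to A\to B\to C\to 0$ of $RG$-modules with $A$ finitely generated and $B$ of type $\FP_1(R)$, the quotient $C$ is of type $\FP_1(R)$. Applied above, this gives that $I_{RG}$ is of type $\FP_1(R)$, which is equivalent to $G$ being of type $\FP_2(R)$. I do not anticipate any real obstacle: the argument is a routine assembly of finiteness properties once the correct sequence is chosen. The only subtle point worth flagging is that one should use \cref{mapalpha} rather than \cref{firstseq}, since in the latter $I_{RG}$ appears only as a direct summand of the middle term, which would force an additional (easy but avoidable) splitting step to isolate its $\FP_1(R)$-property.
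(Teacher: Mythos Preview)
Your proof is correct. The paper derives the corollary from the sequence \cref{firstseq}, which places $I_{RG}$ as a direct summand of the middle term: one shows the middle term is of type $\FP_1(R)$ (since $\oplus_t {}^G I_{RG_t}$ is $\FP_1$ by the $\FP_2$ hypothesis on each $G_t$, and $\omega_R(X)$ is $\FP_1$ by hypothesis), then extracts $I_{RG}$ as a summand. Your route via \cref{mapalpha} instead presents $I_{RG}$ directly as the cokernel, trading the summand-extraction step for the appeal to \cref{relation_module2} to see that $\ker\alpha_{S,T}$ is finitely generated. Both arguments are essentially one-liners once the relevant sequence is in hand, and you correctly anticipate the distinction in your closing remark. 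One minor point: your appeal to \cref{fg_stabs} to conclude that $G$ itself is finitely generated is not actually used later in your argument; the finiteness of $S$ and $T$ already follows from \cref{fg} alone.
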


\subsection{A criterion for \texorpdfstring{$\FP_2(R)$}{FP2(R)}}
We prove the following criterion for $\FP_2(R)$ of group pairs, generalising a criterion from \cite{JL23}.

\begin{teo}
\label{rel_hom_coherence}Let 
 $\mathcal P=(G, X)$ be a group pair with $\cd_{R}(G, X)\leqslant 2$ and let $\mathcal Q=(G, Y)$ be a subpair of type $\FP_1(R)$. Let $RG\hookrightarrow \D$ be an embedding into an Artinian ring and suppose that
\[
\length_{\D}\Tor_1^{R G}( \D, \omega_R(X))<\infty.
\]
Then there exists a group pair $\mathcal Q'$  of type $\FP_2(R)$ such that the embedding $\mathcal Q\subset \mathcal P$ factors through $\mathcal Q\subset \mathcal Q'\to  \mathcal P$.
\end{teo}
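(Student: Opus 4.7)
Using the $\FP_1$-data $(S,T_Y)$ for $\mathcal{Q}$ and any extension $(S_X,T_X)$ to generating data for $\mathcal{P}$, \eqref{relationmoduleseq} yields compatible short exact sequences
\[
0 \to K_Y \to F_Y \to \omega_R(Y) \to 0, \qquad 0 \to K_X \to F_X \to \omega_R(X) \to 0,
\]
with $F_Y\hookrightarrow F_X$ a direct summand of free modules and $K_Y=K_X\cap F_Y$ (by \cref{relation_module1}). Since $\cd_R(G,X)\le 2$, the module $K_X$ is projective, so tensoring with $\mathcal{D}$ identifies
\[
\Tor_1^{RG}(\mathcal{D},\omega_R(X))\cong\ker\!\bigl(\mathcal{D}\otimes_{RG}K_X\to\mathcal{D}\otimes_{RG}F_X\bigr),
\]
a $\mathcal{D}$-module of finite length by hypothesis.

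I would construct $\mathcal{Q}'$ by adjoining finitely many new orbits to $Y$. Every element of $K_X\subseteq F_X$ has finite support: it involves finitely many $s\in S_X$ and $t\in T_X$, with each $t$-component lying in ${}^G I_{RG_t}$ and using only finitely many elements of $G_t$. Using the finite length of $\Tor_1^{RG}(\mathcal{D},\omega_R(X))$ and the Artinianity of $\mathcal{D}$, I would choose $k_1,\ldots,k_n\in K_X$ whose images together with those of $K_Y$ make the inclusion $K_Y+\sum_i RG\cdot k_i\hookrightarrow K_X$ become surjective modulo the Tor after tensoring with $\mathcal{D}$. Let $S'\supseteq S$ and $T'\supseteq T_Y$ collect the finitely many generators and orbit representatives appearing in the $k_i$, and for each $t\in T'\setminus T_Y$ let $H_t\le G_t$ be a finitely generated subgroup containing all $G_t$-elements occurring in the $k_i$.

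Set
\[
Y':=Y\sqcup\bigsqcup_{t\in T'\setminus T_Y}G/H_t
\]
with the natural $G$-equivariant map $Y'\to X$, $gH_t\mapsto g\cdot t$. This produces $\mathcal{Q}'=(G,Y')$ together with $\mathcal{Q}\subset\mathcal{Q}'\to\mathcal{P}$. Its relation module $K_{Y'}$ embeds in $K_X$ by \cref{relation_module1} and fits into a snake-lemma sequence
\[
0\to K_Y\to K_{Y'}\to \bigoplus_{t\in T'\setminus T_Y}{}^G I_{RH_t}\oplus\bigoplus_{s\in S'\setminus S}RG\to 0
\]
whose right-hand term is finitely generated (each $H_t$ is finitely generated by construction). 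By \cref{relation_module2}, it remains to show that $K_{Y'}$ is finitely generated as an $RG$-module. By construction, the $k_i$ together with generators of the new-orbit contributions form a finitely generated submodule $M\subseteq K_{Y'}$ whose image in $\mathcal{D}\otimes K_{Y'}$ surjects onto the whole. A Nakayama-style argument then leverages the projectivity of $K_X$, the finite length of $\Tor_1^{RG}(\mathcal{D},\omega_R(X))$, and the embedding $K_{Y'}\hookrightarrow K_X$ to force $M=K_{Y'}$.

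\textbf{Main obstacle.} The main difficulty lies precisely in this final Nakayama-style step: $K_Y$ is not assumed to be finitely generated, and $\mathcal{D}$ need not be flat over $RG$, so passing from the $\mathcal{D}$-level surjectivity onto $K_X$ to genuine finite generation of $K_{Y'}$ is delicate. All three hypotheses of the theorem---$\cd_R(G,X)\le 2$, the Tor finiteness, and the $\FP_1$ of $\mathcal{Q}$---must be used in concert for this implication, and choosing the subgroups $H_t$ finitely generated (rather than the full stabilisers $G_t$, which may fail to be finitely generated) is essential for $\mathcal{Q}'$ to land in the correct class.
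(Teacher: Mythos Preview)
Your overall strategy matches the paper's: set up compatible relation-module sequences for $(S,T_Y)$ and $(S,T_X)$, use projectivity of $K_X$ coming from $\cd_R(G,X)\le2$, and exploit the finite-length $\Tor_1$ to control images after tensoring with $\mathcal{D}$. You also correctly locate the delicate point. However, the proposed ``Nakayama-style argument'' is a genuine gap, not merely a technical difficulty: from a finitely generated $M\subseteq K_{Y'}$ whose image spans $\mathcal{D}\otimes K_{Y'}$, one cannot conclude that $K_{Y'}$ is finitely generated. No Nakayama-type lemma applies here ($RG$ is not local, $\mathcal{D}$ is not flat over $RG$, and $K_{Y'}$ is not known a priori to be finitely generated). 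Your snake-lemma sequence, while correct, only shows that $K_{Y'}/K_Y$ is finitely generated, which is useless since $K_Y$ itself is not. Even combining this with the containment of $K_{Y'}$ in a finitely generated submodule of $K_X$ (which is what \cref{fg_submodule} would give) does not yield finite generation of $K_{Y'}$, since $RG$ is not Noetherian.

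The paper replaces the missing Nakayama step by group theory. First, \cref{fg_submodule} shows that the image of $K_Y=R\otimes N_{\ab}$ in $K_X=R\otimes(\ker\phi)_{\ab}$ lies in a finitely generated submodule $M$ (its $\mathcal{D}$-image has finite length, being bounded by $\length_{\mathcal{D}}(\mathcal{D}\otimes F_Y)+\length_{\mathcal{D}}\Tor_1$). One then picks a finite set $U\subset\ker\phi$ whose images generate a module containing $M$, and \emph{only afterwards} defines $F'=F(S)\ast(\ast_{t\in T_Y}G_t)\ast(\ast_i G_i)$ (hence $Y'$) large enough that $U\subset K:=\ker(\phi|_{F'})$. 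Second, and this is the ingredient absent from your proposal: since $F'$ is generated over the smaller free product by a finite set $S'$, and for each $s\in S'$ there is $g_s$ in the smaller free product with $\phi(s)=\phi(g_s)$, one has $K=\langle\!\langle N,\{g_ss^{-1}\}\rangle\!\rangle$. Hence $R\otimes K_{\ab}$ is generated as an $RG$-module by the image of $R\otimes N_{\ab}$ together with finitely many elements; that image lies in the finitely generated submodule spanned by $U$ by construction, so $K_{Y'}\cong R\otimes K_{\ab}$ is finitely generated. Throughout, \cref{free_factor} is needed to justify the inclusions $R\otimes N_{\ab}\hookrightarrow R\otimes K_{\ab}\hookrightarrow R\otimes(\ker\phi)_{\ab}$; note that \cref{relation_module1} alone gives only a map, not an embedding.
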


We first need a useful lemma.

\begin{lem}
\label{fg_submodule}
Let $S$ be a ring, $P$   a projective $S$-module and $M\leqslant P$ an $S$-submodule. Suppose that $S$ embeds in an Artinian ring $\D$ and that
  \[
  \length_{\D}\im(\D\otimes_SM\to \D\otimes_SP)<\infty.
  \]
Then $M$ is contained in a finitely generated $S$-submodule of $P$.
\end{lem}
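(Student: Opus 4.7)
The plan is to reduce to the case where $P$ is free, use the finite-length hypothesis to bound the support of the image of $M$ in a fixed basis, and then transport the bound back through the retraction onto $P$.

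First I would fix a free $S$-module $F=\bigoplus_{i\in I}S\cdot e_{i}$ and a splitting $F=P\oplus P'$ with retraction $\pi\colon F\to P$, which exists because $P$ is projective. Since $P$ is a direct summand of $F$, the induced map $\D\otimes_{S}P\hookrightarrow \D\otimes_{S}F$ is injective, so we may regard
\[
L:=\im\bigl(\D\otimes_{S}M\to \D\otimes_{S}P\bigr)
\]
as a $\D$-submodule of $\D\otimes_{S}F\cong\bigoplus_{i\in I}\D\cdot e_{i}$. By hypothesis, $\length_{\D}(L)<\infty$, so $L$ is finitely generated as a $\D$-module.

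Next, since each element of $\bigoplus_{i\in I}\D\cdot e_{i}$ has finite support, and a finitely generated submodule is contained in the sum over the finite union of the supports of its generators, there is a finite subset $I_{0}\subset I$ with $L\subset\bigoplus_{i\in I_{0}}\D\cdot e_{i}$. Now take any $m\in M$ and write $m=\sum_{i}s_{i}e_{i}\in F$ with $s_{i}\in S$ and finite support. The image of $m$ in $\D\otimes_{S}F\cong\bigoplus_{i}\D\cdot e_{i}$ is the tuple $(s_{i})_{i}$ viewed inside $\bigoplus_{i}\D$, and this tuple must lie in $L$, hence in $\bigoplus_{i\in I_{0}}\D\cdot e_{i}$. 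Since $S\hookrightarrow\D$ is injective, the condition that $s_{i}=0$ in $\D$ for $i\notin I_{0}$ forces $s_{i}=0$ in $S$ for those $i$. Therefore $M\subset\bigoplus_{i\in I_{0}}S\cdot e_{i}$, a finitely generated free $S$-submodule of $F$.

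Finally, applying the retraction $\pi\colon F\to P$ (which is the identity on $M\subset P$) gives
\[
M=\pi(M)\subset \pi\Bigl(\bigoplus_{i\in I_{0}}S\cdot e_{i}\Bigr),
\]
and the right-hand side is a finitely generated $S$-submodule of $P$, as required. The only mildly subtle point is the transition from finite-length support in $\D\otimes_{S}F$ to finite support in $F$ itself; this is precisely where the injectivity of $S\hookrightarrow\D$ is used, and I do not anticipate any other real obstacle.
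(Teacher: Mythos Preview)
Your proof is correct and follows essentially the same approach as the paper's: embed $P$ as a direct summand of a free module $F$, use the finite-length hypothesis to trap the image of $M$ in a finitely generated free summand, invoke the injectivity of $S\hookrightarrow\D$ to pull this back to $F$, and then project to $P$. The only organisational difference is that the paper first chooses a finitely generated $M'\leqslant M$ whose image in $\D\otimes_S P$ agrees with that of $M$, takes $F_1$ to contain $M'$, and then argues via the quotient $M/M'\to F/F_1$; you instead observe directly that $L$ has finite support in the basis and read off the coordinate vanishing element by element, which is slightly more streamlined.
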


\begin{proof}
Let $m_1, \ldots, m_n\in M$ be a finite set of elements such that if $M' = \sum_{i=1}^n Sm_i$, we have 
\[
\im(\D\otimes_S M'\to \D\otimes_SP)=\im(\D\otimes_SM\to \D\otimes_SP).
\]
Since $P$ is projective, there is a free $S$-module $F$ such that $F = P\oplus P'$. Since $M'$ is finitely generated, we have $F = F_1\oplus F_2$ for some free modules $F_1, F_2$ with $F_1$ finitely generated and with $M'\leqslant F_1$. Now consider the map 
\[
\tau\colon M/M'\to F/F_1
\]
Since $F/F_1 \cong F_2$, we have that $\im(\tau)$ is a submodule of a free module. Since $\im(\D\otimes_{S}M/M'\to \D\otimes_S F_2) = 0$, we conclude that $\D\otimes_S\im(\tau) = 0$. Hence, from the commutative diagram:
\[
\begin{tikzcd}
\im(\tau) \arrow[r, hook] \arrow[d] 
  & \mathcal{D} \otimes_S F/F_1  \\
\mathcal{D} \otimes_S \im(\tau) \arrow[r, equal] 
  &    \mathcal{D} \otimes_S \im(\tau) \arrow[u]
\end{tikzcd}
\]
follows that $\im(\tau) = 0$. This implies that $M\leqslant F_1$, and so $M\leqslant \pi_P(F_1)$ where $\pi_P\colon F\to P$ is the projection map. Since $F_1$ is finitely generated, so is $\pi_P(F_1)$.
\end{proof}

\begin{proof}[Proof of \cref{rel_hom_coherence}]
Choose  complete set of $G$-orbit representatives $T_Y\subset Y_0$ and $T_X\subset  X_0$ such that $ T_Y\subset T_X$.
Let $S$ be a subset of $G$ such that  $$
G = \left\langle S, \bigcup_{t\in T_Y}G_t\right\rangle. 
$$
  Consider the canonical map  
$$\phi: F(S)*(*_{t\in T_X}G_t) \to G$$ and let $N=(F(S)*(*_{t\in T_Y}G_t))\cap \ker \phi$.
By \cref{relation_module1},
we have the following commutative diagram

\begin{center}
\adjustbox{max width=\textwidth}{
\begin{tikzcd}
0 \arrow[r] & R\otimes_{\Z}N_{\ab} \arrow[d, "\iota_2"] \arrow[r, "\gamma_1"] &  (\oplus_{ s\in S} RG\cdot e_s)\bigoplus (\oplus_{ t\in T_Y} {}RG \cdot e_t) \arrow[r, "\tau_1"] \arrow[d, "\iota_1"] &  \omega_R(Y) \arrow[r] \arrow[d, "\iota_0"] & 0 \\
0 \arrow[r] & R\otimes_{\Z} (\ker \phi)_{\ab} \arrow[r, "\gamma"]                          & (\oplus_{ s\in S} RG\cdot e_s) \bigoplus (\oplus_{ t\in T_X} {}RG\cdot e_t)  \arrow[r, "\tau"]                          & \omega_R(X) \arrow[r]                        & 0
\end{tikzcd}
}
\end{center}

Since $\cd_R(G, X) \leqslant 2$, we have that $\prd_{RG}(\omega_R(X)) \leq 1$ and so 
\[
\ker(\tau)=R\otimes_{\Z} (\ker \phi)_{\ab}
\]
is a projective $R G$-module. Now, applying $\D\otimes_{R G}-$ to the above, we obtain the commutative diagram
\[
\begin{tikzcd}
                                                   & \D\otimes_{\Z G}\ker(\tau_1) \arrow[d, "\Id\otimes\iota_2"] \arrow[r, "\Id\otimes \gamma_1"] & (\oplus_{ s\in S} \D\cdot e_s)\bigoplus (\oplus_{ t\in T_Y} {}\D \cdot e_t) \arrow[d, ""] \\
  {\Tor_1^{R G}( \D, \omega_R(X))} \arrow[r] & \D\otimes_{RG}\ker(\tau) \arrow[r, "\Id\otimes \gamma "]                                    &(\oplus_{ s\in S} \D\cdot e_s)\bigoplus (\oplus_{ t\in T_X} {}\D \cdot e_t)                        
\end{tikzcd}.
\]
Using the commutativity of the diagram and the fact that
\begin{align*}
\length_{\D} (\oplus_{ s\in S} \D\cdot e_s)\bigoplus (\oplus_{ t\in T_Y} {}\D \cdot e_t)&<\infty \textrm{\ and\ }
\length_{\D}\Tor_1^{R G}(\D,\omega_R(X))<\infty
\end{align*}
we see that 
\[
\length_{\D}\im(\Id\otimes\iota_2)<\infty.
\]
By \cref{fg_submodule}, this implies that $\im(\iota_2)$ lies in a finitely generated submodule $M\leqslant \ker(\tau)$.
 
Now choose any finite set of elements $U\subset \ker(\phi)$ whose images in $R\otimes_{\Z}\ker(\phi)_{\ab}$ generate an $RG$-module containing $M$.  Since $U$ is finite,
there  is a finite collection of    finitely generated subgroups $G_1\leqslant G_{t_1},\ldots,G_n\leqslant G_{t_n} $ for some $\{t_1, \ldots, t_n\}\subset T_X\setminus \kappa(T_Y)$ such that 
\[
U\subset F' = F(S)*(*_{t\in T_Y} G_t)*(*_{i=1}^nG_i).
\]
Let $S^\prime$ be a finite generating set of $F^\prime$ over $F(S)*(*_{t\in T_Y}G_t)$. For each $s\in S^\prime$, let $g_s\in F(S)*(*_{t\in T_Y}G_t)$ be any element so that $\phi(s) = \phi(g_s)$. Then we have 
\[
U\subset K = \ker(\phi\mid F') = \normal{N, \{g_ss^{-1} \mid s\in S'\}}.
\]
Since $\ker(\phi)\cap G_t = 1$ for each $t \in T_X$, by \cref{free_factor} we see that $K$ is a free factor of $\ker(\phi)$. In particular, we may make identifications:
\[
R\otimes_{\Z}N_{\ab}\leqslant M\leqslant R\otimes_{\Z} K_{\ab}\leqslant R\otimes_{\Z}\ker(\phi)_{\ab}.
\]
But then this implies that $R\otimes_{\Z} K_{\ab}$ is generated (as an $RG$-module) by the images of $U$ and the images of $\{g_ss^{-1} \mid s\in S' \}$, and hence it is a finitely generated $RG$-module. Finally consider $Y'=Y\cup (\cup^n_{i=1} G/G_i)$ and $\mathcal Q'=(G, Y')$. Then it is clear that  the embedding $\mathcal Q\subset \mathcal P$ factors through $\mathcal Q\subset \mathcal Q'\to  \mathcal P$ and since $R\otimes_{\Z} K_{\ab}$ is finitely generated, $\mathcal Q'$ is of type $\FP_2(R)$ by \cref{relation_module2}.
\end{proof}

We have the following consequence of \cref{rel_hom_coherence}.

\begin{cor}\label{fgstabilisers} Let $\mathcal{P} = (G, X)$ be a group pair with $\operatorname{cd}_R(G, X) \leqslant 2$ and $G$ finitely generated. Let $RG \hookrightarrow \mathcal{D}$ be an embedding in an Artinian ring, and suppose that
\[
\length_{\D}\Tor_1^{RG}(\mathcal{D}, \omega_R(X))<\infty.
\]
Then:
\begin{enumerate}
    \item $G_x $ is finitely generated for every $x \in X$.
    \item If $\mathcal{Q} = (G, Y)$ is a subpair of $\mathcal{P}$ of type $\FP_1(R)$, then there exists a subpair $\mathcal{Q}' = (G, Y')$ of type $\FP_2(R)$ such that $\mathcal{Q}\subset\mathcal{Q}'\subset\mathcal{P}$.
\end{enumerate}
\end{cor}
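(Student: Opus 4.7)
The plan is to apply \cref{rel_hom_coherence} twice, using part (1) to unlock part (2). All of the hypotheses of \cref{rel_hom_coherence} hold under the standing assumptions of the corollary.

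\textbf{Part (1).} I would fix $x \in X$. If $x\in G\cdot x_0$ then $G_x$ is trivial, so I may assume $x\in X_0$. Consider the subpair $\mathcal{Q}_x = (G, G\cdot x_0 \sqcup G\cdot x)$ of $\mathcal{P}$. Because $G$ is finitely generated, $\mathcal{Q}_x$ is of type $\FP_1(R)$ by \cref{fg}. Applying \cref{rel_hom_coherence} to this inclusion produces a group pair $\mathcal{Q}'_x$ of type $\FP_2(R)$ with $\mathcal{Q}_x \subset \mathcal{Q}'_x$. Since $\mathcal{Q}_x \subset \mathcal{Q}'_x$ is a subpair inclusion, the stabiliser in $\mathcal{Q}'_x$ of the image of $x$ remains $G_x$. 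Now \cref{fg_stabs}, combined with the finite generation of $G$ and the $\FP_2(R)$ property of $\mathcal{Q}'_x$, implies that every stabiliser in $\mathcal{Q}'_x$ is finitely generated; in particular $G_x$ is finitely generated.

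\textbf{Part (2).} I would revisit the construction in the proof of \cref{rel_hom_coherence}. There, the output is $\mathcal{Q}' = (G, Y \sqcup \bigsqcup_{i=1}^n G/G_i)$ with each $G_i$ a finitely generated subgroup of some stabiliser $G_{t_i}$, $t_i \in T_X$, together with a map $\mathcal{Q}'\to \mathcal{P}$. The only obstruction to this being a subpair inclusion is that $G_i$ may be a proper subgroup of $G_{t_i}$, so the canonical map $G/G_i \to G/G_{t_i}$ is not injective. By part (1), each $G_{t_i}$ is itself finitely generated, so I would take $G_i = G_{t_i}$ from the outset. With this choice, $F' = F(S) * (*_{t\in T_Y}G_t) * (*_{i=1}^n G_{t_i})$ still admits a finite generating set $S'$ over $F(S) * (*_{t\in T_Y}G_t)$; \cref{free_factor} still shows that $K = \ker(\phi|_{F'})$ is a free factor of $\ker(\phi)$; and the same argument shows that $R\otimes_{\Z} K_{\ab}$ is generated as an $RG$-module by the images of $U$ and $\{g_s s^{-1} \mid s\in S'\}$, and is therefore finitely generated. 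By \cref{relation_module2}, the pair $\mathcal{Q}' = (G, Y \sqcup \bigsqcup_{i=1}^n G/G_{t_i})$ is of type $\FP_2(R)$, and since each orbit $G/G_{t_i}$ is realised as $G\cdot t_i \subset X$, it is now genuinely a subpair of $\mathcal{P}$ containing $\mathcal{Q}$.

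The bulk of the work lies in part (1); once the stabilisers are known to be finitely generated, part (2) is a direct refinement of the existing construction, the only delicate point being that enlarging the $G_i$ to full stabilisers does not disrupt the finite generation of $R\otimes_{\Z} K_{\ab}$, which is guaranteed precisely because each $G_{t_i}$ is now finitely generated.
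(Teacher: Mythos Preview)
Your proposal is correct and follows essentially the same approach as the paper's proof: for part (1) you apply \cref{rel_hom_coherence} to the subpair $(G, G\cdot x_0 \sqcup G\cdot x)$ and then invoke \cref{fg_stabs}, and for part (2) you observe that the finite generation of the $G_{t_i}$ from part (1) lets you take $G_i = G_{t_i}$ in the construction of \cref{rel_hom_coherence}, making $\mathcal{Q}'$ a genuine subpair of $\mathcal{P}$. The paper's proof is identical in structure, just more terse.
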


\begin{proof}
    Let $x \in X_0$ and consider the $G$-set $Y = G \cdot x_0 \cup G \cdot x$. By \cref{rel_hom_coherence}, there exists a group pair $(G, Y')$ of type $\operatorname{FP}_2(R)$ such that $(G, Y) \hookrightarrow (G, Y')$. Then, by \cref{fg_stabs}, $G_x$ is finitely generated. This establishes the first statement.

    For the second statement, we note that the first statement implies that in the proof of \cref{rel_hom_coherence} we can take each $G_i$ to be equal to $G_{t_i}$ (as they are finitely generated) and so $\mathcal{Q}'$ can be taken to be a subpair of $\mathcal{P}$.
\end{proof}

 \section{Vanishing of second \texorpdfstring{$L^2$}--Betti numbers of group pairs and homological coherence}
 \label{sect:L2}
 
\subsection{\texorpdfstring{$L^2$}--Betti numbers of group pairs}

Given a group pair $(G,X)$ and $k\ge 1$ we define
$$b_k^{(2)}(G, X)=\beta^{\Q G}_{k-1} (\omega_{\Q}(X)).$$
The following proposition is an analog of \cite[Proposition 3.9]{JL23} for group pairs.

\begin{pro}\label{L2subgroup}
Let $(G,X)$ be a group pair and $n\ge 1$. Assume that $\cd_{\Q}(G,X)\le n$ and $b_n^{(2)}(G, X)=0$. Then for every subgroup $H$ of $G$, $b_n^{(2)}(H, X)=0$.
\end{pro}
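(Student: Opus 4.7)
The plan is to first upgrade the dimension-theoretic hypothesis $b_n^{(2)}(G,X) = 0$ to the actual vanishing of the module $\Tor_{n-1}^{\Q G}(\mathcal{N}(G), \omega_\Q(X))$, and then to transfer this vanishing to $H$ via a Shapiro-style change-of-rings argument.

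Since $\cd_\Q(G,X)\le n$, I start by choosing a projective $\Q G$-resolution
\[
0 \to P_{n-1} \to \cdots \to P_0 \to \omega_\Q(X) \to 0
\]
of length at most $n-1$. The top Tor $\Tor_{n-1}^{\Q G}(\mathcal{N}(G),\omega_\Q(X))$ is then the kernel of $\mathcal{N}(G)\otimes_{\Q G}d_{n-1}$, so it sits inside the projective $\mathcal{N}(G)$-module $\mathcal{N}(G)\otimes_{\Q G}P_{n-1}$. Because the Ore overring $\mathcal{U}(G)$ is flat over $\mathcal{N}(G)$, any submodule of a projective $\mathcal{N}(G)$-module is torsion-free and embeds into the corresponding free $\mathcal{U}(G)$-module $\mathcal{U}(G)\otimes_{\Q G}P_{n-1}$ without change of $\dim$. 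Since the trace is faithful on projections of $\mathcal{U}(G)$, $\dim_{\mathcal{U}(G)}$ is faithful on submodules of free $\mathcal{U}(G)$-modules, and the hypothesis $b_n^{(2)}(G,X)=0$ therefore forces the honest vanishing
\[
\Tor_{n-1}^{\Q G}(\mathcal{N}(G),\omega_\Q(X))=0.
\]

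For the transfer to $H$, the multiplication map
\[
\mathcal{N}(H)\otimes_{\Q H}\Q G \longrightarrow \mathcal{N}(G)
\]
is injective: choosing right coset representatives $G = \bigsqcup_i H g_i$ and using the Hilbert decomposition $\ell^2(G) = \bigoplus_i \ell^2(H) g_i$, the $\mathcal{N}(H)$-actions on distinct cosets are independent. Let $C$ denote its cokernel, so that we have a short exact sequence of right $\Q G$-modules $0 \to \mathcal{N}(H)\otimes_{\Q H}\Q G \to \mathcal{N}(G) \to C \to 0$. Applying $\Tor^{\Q G}(-,\omega_\Q(X))$ and using that $\prd_{\Q G}(\omega_\Q(X))\le n-1$ kills $\Tor_n^{\Q G}(C,\omega_\Q(X))$, we obtain an injection
\[
\Tor_{n-1}^{\Q G}(\mathcal{N}(H)\otimes_{\Q H}\Q G,\omega_\Q(X)) \hookrightarrow \Tor_{n-1}^{\Q G}(\mathcal{N}(G),\omega_\Q(X)) = 0.
\]
A Shapiro-style identification (valid because $\Q G$ is free as a left $\Q H$-module, so that a $\Q G$-projective resolution of $\omega_\Q(X)$ restricts to a $\Q H$-projective resolution) rewrites the left side as $\Tor_{n-1}^{\Q H}(\mathcal{N}(H),\omega_\Q(X))$. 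Hence this Tor vanishes, and therefore $b_n^{(2)}(H,X)=0$.

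The main obstacle is the first step, the upgrade from dimension zero to actual vanishing. A naive comparison of the form $\dim_{\mathcal N(H)}(\mathrm{res}_H^G N) \le [G:H]\cdot\dim_{\mathcal N(G)} N$ will not do for infinite-index $H$, since such a formula fails for dimension-zero modules in general (for instance, $\mathcal N(\mathbb Z)/(t-1)\mathcal N(\mathbb Z)$ has $\mathcal N(\mathbb Z)$-dimension zero but is a nonzero $\Q$-vector space). It is essential that the top Tor be zero as a module, and this rests crucially on the projective dimension hypothesis $\cd_\Q(G,X)\le n$ combined with the fact that the top Tor is a submodule of a projective $\mathcal N(G)$-module, so that the rigidity of the Ore/trace structure can be brought to bear.
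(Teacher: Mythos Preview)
Your proof is correct and follows essentially the same strategy as the paper: first use the bound $\prd_{\Q G}(\omega_\Q(X))\le n-1$ to upgrade the hypothesis $b_n^{(2)}(G,X)=0$ to the honest vanishing of the top Tor, then transfer to $H$ via the injectivity of the multiplication map and a Shapiro-type identification. The only cosmetic difference is that the paper works directly with $\mathcal{U}(G)$ (observing that the top Tor is a projective $\mathcal{U}(G)$-module, hence zero once its dimension vanishes), whereas you route through $\mathcal{N}(G)$ and then embed into a projective $\mathcal{U}(G)$-module to invoke faithfulness of $\dim_{\mathcal{U}(G)}$; your version is slightly more explicit about why dimension zero forces vanishing, but the underlying idea is the same.
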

\begin{proof}
  Since $\operatorname{cd}_{\mathbb{Q}}(G, X) \leq n$, the module $\Tor_{n-1}^{\mathbb{Q}[G]}(\mathcal{U}(G), \omega_{\mathbb{Q}}(X))$ is a projective $\mathcal{U}(G)$-module. Thus, 
\[
  \Tor_{n-1}^{\mathbb{Q}[G]}(\mathcal{U}(G), \omega_{\mathbb{Q}}(X)) = \{0\}
\quad \text{if and only if} \quad
b_n^{(2)}(G, X) = 0.
\]
The multiplicative map
$
\mathcal{U}(H) \otimes_{\mathbb{Q}[H]} \mathbb{Q}[G] \longrightarrow \mathcal{U}(G)$
is injective. Therefore, taking into account that $\cd_{\Q}(G,X)\le n$ and using the Shapiro Lemma, we obtain that 
\[
\Tor_{n-1}^{\mathbb{Q}[H]}(\mathcal{U}(H), \omega_{\mathbb{Q}}(X)) 
\cong 
\Tor_{n-1}^{\mathbb{Q}[G]}(\mathcal{U}(H) \otimes_{\mathbb{Q}[H]} \mathbb{Q}[G], \omega_{\mathbb{Q}}(X)) = \{0\}.
\]
This implies that $b_n^{(2)}(H, X) = 0$.
\end{proof}

\begin{pro}\label{onerelatorflat}
    Let $A$ and $B$ be two locally indicable groups, and let $w \in A * B$ be an element that is neither conjugated to an element of $A$ or $B$ nor a proper power. Set
    \[
    G = A * B / \normal{w}
    \quad \text{and} \quad
    X = G \cdot x_0 \sqcup G / A \sqcup G / B.
    \]
    Let $K$ be a field of characteristic zero. Then the $KG$-module $\mathcal{D}_{KG} \otimes_K \omega_K(X)^{op}$ is flat. In particular, 
    \[
    b_2^{(2)}(G, X) = 0.
    \]
\end{pro}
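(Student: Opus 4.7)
The plan mirrors the computation in the proof of \cref{onerelatorpair}, but for the smaller $G$-set $X = G \cdot x_0 \sqcup G/A \sqcup G/B$. The idea is to identify $\omega_K(X)$ as a left one-relator $KG$-module and then apply \cref{preflat} to its opposite to obtain flatness; the $L^2$-Betti vanishing then drops out by tensoring with the appropriate division ring.

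First I would invoke the exact sequence \eqref{relationmoduleseq} with $T = \{t_A, t_B\}$ representing the $G$-orbits $G/A$ and $G/B$, and $S = \emptyset$ (admissible since $G$ is generated by the images of $A$ and $B$). Because $w$ is not a proper power, \cref{identity_theorem} applies with $n = 1$ and $u = w$, and since $w$ is trivial in $G$, the subgroup $\langle u\rangle \leqslant G$ is trivial. Hence $N = \normal{w} \leqslant A * B$ satisfies $N_{\ab} \cong \Z G$ as a $\Z G$-module, so that $\ker \alpha_{\emptyset, T} \cong KG$. Therefore \eqref{relationmoduleseq} reduces to a resolution
\[
    0 \longrightarrow KG \xrightarrow{\ \cdot \gamma\ } KG^2 \longrightarrow \omega_K(X) \longrightarrow 0
\]
of left $KG$-modules, for some $\gamma \in KG^2$; injectivity of the first map forces $\gamma \neq 0$. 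Thus $\omega_K(X)$ is a one-relator left $KG$-module.

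Applying the ``opposite'' functor, the same resolution exhibits $\omega_K(X)^{op}$ as a right one-relator $KG$-module (using the standard anti-involution $g \mapsto g^{-1}$ to identify $(KG)^{op}$ and $(KG^2)^{op}$ with $KG$ and $KG^2$ as right $KG$-modules). Since $G$ is locally indicable by \cref{one-relator_facts}(3), \cref{preflat} then yields directly the flatness of $\mathcal{D}_{KG} \otimes_K \omega_K(X)^{op}$, giving the main statement.

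For the ``in particular'' conclusion, I would specialise to $K = \Q$ and tensor the resolution above over $\Q G$ with $\mathcal{D}_{\Q G}$, giving an exact sequence with leading term
\[
    0 \longrightarrow \Tor_1^{\Q G}(\mathcal{D}_{\Q G}, \omega_{\Q}(X)) \longrightarrow \mathcal{D}_{\Q G} \xrightarrow{\ \cdot \bar\gamma\ } \mathcal{D}_{\Q G}^2.
\]
The strong Atiyah conjecture for locally indicable groups (recalled in \cref{sec: wAc}) makes $\mathcal{D}_{\Q G}$ a division ring, so multiplication by the nonzero $\bar\gamma$ is injective; hence the $\Tor_1$ term vanishes and $b_2^{(2)}(G,X) = \dim_{\mathcal{D}_{\Q G}} \Tor_1^{\Q G}(\mathcal{D}_{\Q G}, \omega_{\Q}(X)) = 0$. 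The only delicate point in this plan is the application of \cref{identity_theorem}; once it is confirmed that the hypothesis ``$w$ is not a proper power'' collapses $N_{\ab}$ to a cyclic free $\Z G$-module, everything else is a formal consequence of \cref{preflat} and the strong Atiyah conjecture.
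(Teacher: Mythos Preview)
Your proposal is correct and follows essentially the same approach as the paper: identify $\omega_K(X)$ as a one-relator module, then apply \cref{preflat}. Your argument for the one-relator claim is in fact more careful than the paper's, which simply cites \cref{onerelatorpair} even though the $G$-set there has an extra orbit $G/\langle u\rangle$; you correctly observe that when $w$ is not a proper power this orbit is regular and contributes nothing to the relation module, so \cref{identity_theorem} with $n=1$ gives $\ker\alpha_{\emptyset,T}\cong KG$ directly.

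The only genuine difference is in deducing $b_2^{(2)}(G,X)=0$. The paper uses the flatness just established together with the standard diagonal-action isomorphism
\[
\Tor_1^{KG}(\mathcal{D}_{KG},\omega_K(X)) \;\cong\; \Tor_1^{KG}\bigl(\mathcal{D}_{KG}\otimes_K\omega_K(X)^{op},\,K\bigr),
\]
which vanishes because the first argument is flat. You instead tensor the explicit length-one free resolution with $\mathcal{D}_{\Q G}$ and use that $\mathcal{D}_{\Q G}$ is a division ring to see directly that the map $\mathcal{D}_{\Q G}\to\mathcal{D}_{\Q G}^2$ is injective. Both arguments are valid; yours is slightly more elementary but does not actually use the flatness statement, whereas the paper's route makes the ``in particular'' a genuine corollary of the main assertion.
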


\begin{proof}
    Observe that $\omega_K(X)$ is a one-relator $KG$-module by \cref{onerelatorpair}. Thus, by \cref{preflat}, the module $\mathcal{D}_{KG} \otimes_K \omega_K(X)^{op}$ is flat. Thus, we have
    \[
    \Tor_1^{KG}(\D_{KG}, \omega_K(X)) \cong \Tor_1^{KG}(\D_{KG}\otimes_K\omega_K(X)^{op}, K) = \{0\}
    \]
    and so $b_2^{(2)}(G, X) = 0$ as claimed.
\end{proof}

The previous proposition leads to the following interesting property of torsion-free one-relator products of locally indicable groups.

\begin{teo}\label{teo:intersection}
    Let $A$ and $B$ be two locally indicable groups, and let $w \in A * B$ be an element that is neither conjugated to an element in $A$ or $B$ nor a proper power. 
    Then, for every finitely generated subgroup $H$ of $A * B / \normal{w}$, the intersections $H \cap A, H\cap B$ are finitely generated. 
\end{teo}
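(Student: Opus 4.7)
The plan is to apply \cref{fgstabilisers} to the pair $(H, X)$, where $H$ is a fixed finitely generated subgroup of $G = A*B/\normal{w}$ and
\[
X = G \cdot x_0 \sqcup G/A \sqcup G/B,
\]
choosing as the target Artinian ring the division ring $\mathcal{D}_{\Q H}$. Once the hypotheses are verified, the conclusion of \cref{fgstabilisers}(1) says that every $H$-stabiliser $H_x$ is finitely generated. Since the stabilisers of the basepoints of the orbits $G/A$ and $G/B$ are precisely $H\cap A$ and $H\cap B$, the theorem follows.

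The input on $(G, X)$ comes from the paper itself: by \cref{onerelatorpair} we have $\cd_{\Z}(G, X)\leqslant 2$, so in particular $\cd_{\Q}(G, X)\leqslant 2$; and by \cref{onerelatorflat}, $b_2^{(2)}(G, X)=0$. To transfer these to $(H, X)$, the first step is to note that $\Q G$ is free as a left $\Q H$-module (decomposing along right cosets), so restricting a length-one projective resolution of $\omega_{\Q}(X)$ over $\Q G$ yields a length-one projective resolution over $\Q H$; hence $\cd_{\Q}(H, X)\leqslant 2$. The second step is to apply \cref{L2subgroup} with $n=2$, which gives $b_2^{(2)}(H, X)=0$.

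The remaining ingredient is an Artinian overring of $\Q H$ controlling the relevant $\Tor$. Since $w$ is not a proper power, by \cref{one-relator_facts}(3) the group $G$ is locally indicable, and hence so is $H$. The strong Atiyah conjecture for locally indicable groups (\cite{JL20}, as recalled in \cref{sec: wAc}) then provides a division ring $\mathcal{D}_{\Q H}$ satisfying
\[
b_2^{(2)}(H, X) = \beta^{\Q H}_1(\omega_{\Q}(X)) = \dim_{\mathcal{D}_{\Q H}} \Tor_1^{\Q H}(\mathcal{D}_{\Q H}, \omega_{\Q}(X)).
\]
The vanishing $b_2^{(2)}(H, X)=0$ together with $\mathcal{D}_{\Q H}$ being a division ring forces the $\Tor$ to vanish; in particular it has finite length. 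All hypotheses of \cref{fgstabilisers}(1) are now met for $(H, X)$ with $R=\Q$ and $\mathcal{D}=\mathcal{D}_{\Q H}$, and the argument concludes as described above.

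I do not foresee a serious obstacle: the genuine content has already been packaged in \cref{onerelatorpair}, \cref{onerelatorflat}, \cref{L2subgroup} and \cref{fgstabilisers}. The only subtle point is the appeal to the strong Atiyah conjecture to guarantee that $\mathcal{D}_{\Q H}$ is a division ring, which is exactly the place where the hypothesis that $w$ is not a proper power is essential, and is precisely what obstructs extending the conclusion to proper power $w$.
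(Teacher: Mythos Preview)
Your proposal is correct and follows essentially the same route as the paper: invoke \cref{onerelatorpair} and \cref{onerelatorflat} for the pair $(G,X)$, pass to $(H,X)$ via \cref{L2subgroup}, and then apply \cref{fgstabilisers}. The only difference is that the paper takes $\mathcal D=\mathcal D_{\Q G}$ while you take $\mathcal D=\mathcal D_{\Q H}$; your choice is in fact the more transparent one, since the vanishing $b_2^{(2)}(H,X)=0$ is literally the statement $\dim_{\mathcal D_{\Q H}}\Tor_1^{\Q H}(\mathcal D_{\Q H},\omega_\Q(X))=0$.
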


\begin{proof}
    Let $X = G \cdot x_0 \sqcup G / A \sqcup G / B$. By \cref{onerelatorpair}, $\cd_{\Q}(G,X)\le 2$. By \cref{onerelatorflat}, $b_2^{(2)}(G, X) = 0$. Therefore, by \cref{L2subgroup},  we also have $b_2^{(2)}(H, X) = 0$. Now,  we put $\D=\D_{\Q G}$ and apply \cref{fgstabilisers}.
\end{proof}

\subsection{Proof of Theorem \ref{cohomologicalcoherence}}
Let $H$ be a finitely generated subgroup. Since $\cd_\mathbb{Q}(G, X) \leq 2$,  $\cd_\mathbb{Q}(H, X) \leq 2$ as well. Consider the subpair $(H, H \cdot x_0)$ of $(H, X)$. We want to apply \cref{rel_hom_coherence} with $\mathcal{D} = \mathcal{R}_{\mathbb{Q} G}$.

By \cref{L2subgroup}, since $b_2^{(2)}(G, X) = 0$, we also have $b_2^{(2)}(H, X) = 0$. Therefore, $\Tor_1^{\mathbb{Q} H}(\mathcal{D}, \omega_\Q(X)) = 0$. 

By \cref{fgstabilisers}, there exists a subpair $(H, Y)\subset (H, X)$ of type $\FP_2(\Q)$ such that for every $y \in Y$, the stabiliser $H_y$ is a finitely generated subgroup of $G_y$.

Since $G_x$ is homologically coherent over $\Q$ for all $x\in X$, it follows that $H_y$ is  of type $\FP_2(\mathbb{Q})$ for all $y\in Y\subseteq X$. Therefore, by \cref{fp2_pair_to_fp2}, $H$ is of type $\FP_2(\mathbb{Q})$.

\section{Cohen-Lyndon property and  promotion of coherence from homological  coherence}

\label{sect:prom}
Given a group pair $\mathcal{P} = (G, X)$  we put
\[\St_{\mathcal P}=\{G_x  : x \in X\}, \,\,\, N_{\mathcal{P}} = \langle \St_{\mathcal{P}}\rangle\, \textrm{\ and\ }\,
\pi(\mathcal{P}) = G / N_{\mathcal{P}}.
\]
In the following, we understand $\St_{\mathcal{P}}$ not as a multiset but as a set, that is, if for $x, y \in X$, we have $G_x  = G_y$, then it appears only once in $\St_{\mathcal{P}}$.

Given a map $\kappa:\mathcal Q\to \mathcal P$ between group pairs, it induces the natural map $\pi_\kappa: \pi(\mathcal Q)\to \pi (\mathcal P)$.

\subsection{Cohen-Lyndon property} Let $\mathcal{P} = (G, X)$ be a group pair. We say that it satisfies the {\bf Cohen–Lyndon property} if there exists a complete set of $N_{\mathcal P}$-orbit representatives $T \subset \St_{\mathcal P}$ (where $G$ acts by conjugation on $\St_{\mathcal P}$) such that
 $N_{\mathcal P}=  \ast_{K\in T} K$. Note that for $K\in T$, we have $G_K = N_G(K)$, where $N_G(K)$ denotes the normaliser of $K$ in $G$.
 
Our definition is equivalent to \cite[Definition~3.13]{Su20}.  
In their terminology, the triple $\bigl(G, \{N_G(K)\}_{K\in T}, T\bigr)$ has the Cohen--Lyndon property.
 The following result, due to Edjvet--Howie~\cite[Theorem~1.1]{EH87}, provides a group pair that satisfies the Cohen--Lyndon property.
\begin{teo}
\label{one-relator_CL}
Let $A$ and $B$ be locally indicable groups and let $w\in A*B$ be a cyclically reduced word. Then $(A*B, A*B/\langle w\rangle)$ is Cohen--Lyndon. 
\end{teo}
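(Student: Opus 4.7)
Let $G=A*B$, $N=\normal{w}$, and write $w=u^n$ with $u$ cyclically reduced and not a proper power in $A*B$; so $u$ is not conjugate into $A$ or $B$. I would work with the Bass--Serre tree $T$ of the free product $G=A*B$, whose vertex stabilisers are conjugates of $A$ or $B$ and whose edge stabilisers are trivial. Both $u$ and $w$ act hyperbolically on $T$ with a common axis $A_u$ whose setwise $G$-stabiliser is $\langle u\rangle$ (possibly extended by order two if $u$ is conjugate to $u^{-1}$, which does not affect the argument). By Howie's Freiheitssatz (\cref{one-relator_facts}(1)), $A$ and $B$ embed in $G/N$, hence $N\cap A^g=N\cap B^g=\{1\}$ for every $g\in G$, so $N$ acts freely on $T$ and is a free group by \cref{subtree}.

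\textbf{Free product structure.} Pick $R\subset G$ a set of representatives for the double cosets $N\backslash G/N_G(\langle w\rangle)$. Then $\{r\langle w\rangle r^{-1}:r\in R\}$ is a complete set of $N$-orbit representatives in $\St_{\mathcal P}$, and the natural map
\[
\phi: F:=\ast_{r\in R}\langle x_r\rangle\longrightarrow N,\qquad x_r\mapsto rwr^{-1},
\]
is surjective since $N$ is the normal closure of $w$ and every $G$-conjugate of $w$ is $N$-conjugate to some $rwr^{-1}$. For injectivity, any reduced word $x_{r_1}^{n_1}\cdots x_{r_k}^{n_k}$ of $F$ maps to a product of hyperbolic isometries of $T$ along pairwise distinct axes $r_iA_u$, the distinctness following from $r_i^{-1}r_{i+1}\notin\langle u\rangle$ (since $R$ in particular picks distinct $\langle u\rangle$-cosets). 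A ping-pong argument on $T$, exploiting that $N$ acts freely and so admits no elliptic elements, would show this product has positive translation length and is hence nontrivial.

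\textbf{Main obstacle.} The delicate step is ensuring the ping-pong succeeds when the axes $r_iA_u$ share long segments, where Nielsen-type cancellations are possible in principle. To bypass this, I would combine the geometric argument with Howie's identity theorem (\cref{identity_theorem}): abelianising gives a $\Z G$-equivariant map $\phi^{\ab}:F^{\ab}\to N^{\ab}$ between two copies of $\Z[G/\langle u\rangle_G]$ (where $\langle u\rangle_G$ denotes the image of $\langle u\rangle$ in $G/N$), sending basis to basis and hence an isomorphism. This forces $\ker\phi\subseteq [F,F]$, which together with the free action on $T$ rules out nontrivial kernel elements, since any nontrivial element of $F$ maps to an element of the free group $N$ whose translation length on $T$ is determined by the combinatorics of the axes $r_iA_u$, and nonzero length forbids triviality.
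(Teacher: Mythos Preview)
The paper does not prove this theorem; it simply attributes it to Edjvet--Howie \cite{EH87}, whose argument proceeds via pictures over relative presentations rather than Bass--Serre theory.

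Your strategy is interesting and can be made to work, but as written there is a genuine gap at the end. You correctly establish that $N$ is free (it acts freely on $T$), that $\phi$ is surjective, and (via the identity theorem together with $N_G(\langle w\rangle)=\langle u\rangle$, which does hold here since $A*B$ is torsion-free) that $\phi^{\ab}$ is an isomorphism, hence $K:=\ker\phi\subseteq[F,F]$. But the sentence ``which together with the free action on $T$ rules out nontrivial kernel elements'' is circular: an element $k\in K$ maps to $1\in N$, which has translation length zero on $T$, so the free action gives you nothing. Knowing only that $K=[F,K]$ for a normal subgroup $K$ of a free group $F$ does not force $K=1$; you need an additional argument. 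The ping-pong you propose earlier also does not close this, precisely because of the axis-overlap problem you yourself flag.

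There is, however, a clean fix that avoids ping-pong entirely. Since $F$ and $N$ are free, Magnus's theorem identifies their associated graded Lie rings (for the lower central series) with the free Lie rings on $F^{\ab}$ and $N^{\ab}$. The map $\mathrm{gr}(\phi)$ is the Lie-ring map induced by $\phi^{\ab}$, hence an isomorphism in every degree. Free groups are residually nilpotent, so a nontrivial $k\in K$ would have nonzero image in some $\gamma_i(F)/\gamma_{i+1}(F)$, yet map to zero under the isomorphism $\mathrm{gr}(\phi)$---a contradiction. This replaces your final paragraph and completes the proof. (A minor point: $F$ carries no natural $G$-action, so calling $\phi^{\ab}$ ``$\Z G$-equivariant'' is not meaningful; what you need, and have, is just that it sends basis to basis.)
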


Given a group pair $\mathcal{P}$ satisfying the Cohen--Lyndon property, the following proposition helps us control the kernel of the map $\pi(\mathcal{Q}) \to \pi(\mathcal{P})$ for a subpair $\mathcal{Q}$ of $\mathcal{P}$.
 
\begin{pro}
\label{CL_kernel}
Let $\mathcal{P} = (G, X)$ be a group pair satisfying the Cohen-Lyndon property and let $\mathcal{Q} = (H, Y)$ be a subpair with associated map $\kappa$. Then $\ker(\pi_{\kappa})$ splits as a free product of a free group and a collection of   $H_x $ for  some $x\in X\setminus Y$.
\end{pro}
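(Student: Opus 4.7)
My plan is to compute $\ker(\pi_\kappa)$ by applying the Kurosh subgroup theorem to the Cohen--Lyndon free product decomposition of $N_{\mathcal{P}}$. First, I would observe that since $H_y \leqslant G_{\kappa(y)} \in \St_{\mathcal{P}} \subseteq N_{\mathcal{P}}$ for every $y \in Y$, one has $N_{\mathcal{Q}} \leqslant H \cap N_{\mathcal{P}}$. Identifying $H$ with its image in $G$ and $Y$ with $\kappa(Y) \subseteq X$, this yields
\[
\ker(\pi_\kappa) = (H \cap N_{\mathcal{P}})/N_{\mathcal{Q}}.
\]

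Next, I would invoke the Cohen--Lyndon property to write $N_{\mathcal{P}} = \ast_{K \in T} K$ for some $T \subseteq \St_{\mathcal{P}}$, and for each $K \in T$ pick some $x_K \in X$ with $G_{x_K} = K$. Applying the Kurosh subgroup theorem (equivalently, Bass--Serre theory on the Bass--Serre tree $\mathcal{T}$ of this free product, whose edge stabilisers are trivial) to $H \cap N_{\mathcal{P}} \leqslant N_{\mathcal{P}}$ would produce a decomposition
\[
H \cap N_{\mathcal{P}} = F' \ast \Bigl(\ast_\alpha H_{y_\alpha}\Bigr),
\]
where $F'$ is a free group and $y_\alpha = n_\alpha \cdot x_{K_\alpha}$ for $n_\alpha \in N_{\mathcal{P}}$ and $K_\alpha \in T$ chosen so that $\{n_\alpha K_\alpha n_\alpha^{-1}\}$ represents the $(H \cap N_{\mathcal{P}})$-orbits on the non-trivial vertex stabilisers of $\mathcal{T}$. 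The standard fact that $N_{N_{\mathcal{P}}}(K) = K$ for a free factor of a free product guarantees that distinct vertices of $\mathcal{T}$ have distinct stabilisers, so each $y_\alpha$ is well-defined up to the freedom of choosing $x_K$ (which does not affect $H_{y_\alpha}$).

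Then I would refine the choice so that $y_\alpha \in Y$ whenever some $y \in Y$ lies in the corresponding orbit, and $y_\alpha \in X \setminus Y$ otherwise. Using the $H$-invariance of $Y$ together with the identity $H_y = h^{-1} H_{y_\alpha} h$, valid whenever $G_{hy} = G_{y_\alpha}$ for some $h \in H \cap N_{\mathcal{P}}$, a direct verification would show that $N_{\mathcal{Q}}$ coincides with the normal closure in $H \cap N_{\mathcal{P}}$ of the free factors $\{H_{y_\alpha} : y_\alpha \in Y\}$. Since quotienting a free product by the normal closure of some of its free factors returns the free product of the remaining factors, I would conclude
\[
\ker(\pi_\kappa) \cong F' \ast \Bigl(\ast_{y_\alpha \notin Y} H_{y_\alpha}\Bigr),
\]
which exhibits $\ker(\pi_\kappa)$ as a free product of a free group and stabilisers $H_x$ for $x \in X \setminus Y$, as required.

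The main subtlety I anticipate is the final bookkeeping step, verifying that $N_{\mathcal{Q}}$ is exactly the normal closure of the free factors indexed by $y_\alpha \in Y$; this relies crucially on the fact that distinct non-trivial vertex stabilisers of $\mathcal{T}$ are pairwise non-conjugate in $N_{\mathcal{P}}$, which prevents any ``extra'' orbits (coming from $X \setminus Y$) from slipping into $N_{\mathcal{Q}}$.
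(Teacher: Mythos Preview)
Your proposal is correct and follows essentially the same route as the paper: identify $\ker(\pi_\kappa)$ with $(H\cap N_{\mathcal P})/N_{\mathcal Q}$, apply the Kurosh subgroup theorem to $H\cap N_{\mathcal P}$ inside the Cohen--Lyndon free product $N_{\mathcal P}=\ast_{K\in T}K$, and then observe that $N_{\mathcal Q}$ is exactly the normal closure of those Kurosh factors that can be labelled by elements of $Y$. The paper's proof is extremely terse (three sentences), and what you call the ``main subtlety'' --- showing that each nontrivial $H_y$ with $y\in Y$ is $L$-conjugate to a \emph{full} Kurosh factor, and conversely that each such factor can be relabelled by an element of $Y$ --- is precisely the content the paper leaves to the reader; your outline of this step is correct. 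One small remark: the self-normalising property $N_{N_{\mathcal P}}(K)=K$ is not really needed for the bookkeeping, since the key point (that every $H_y$ equals $l^{-1}(L\cap nKn^{-1})l$ for some $l\in L$ and some Kurosh representative $n$) follows directly from the double-coset description in Kurosh.
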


\begin{proof} 
Since $\mathcal{P}$ is Cohen-Lyndon,   there exists a complete set of $N_{\mathcal P}$-orbit representatives $T \subset \St_{\mathcal P}$ such that
 $N_{\mathcal P}=  \ast_{K\in T} K$.

Let $L = N_{\mathcal P} \cap H$. Then, by the Kurosh subgroup theorem, $L$ is the free product of a free group and a collection $\mathcal{S}$ of subgroups $H_x $ for some $x \in X$. 

Then $\ker(\pi_{\kappa})$ is obtained from $L$ by quotienting out the free factors $H_x $ with $x \in Y$.
\end{proof}

We say that a group pair $(G, X)$ satisfies the {\bf finitely generated intersection property (f.g.i.p.)} if, for every finitely generated subgroup $H$ of $G$ and every $x \in X$, the stabiliser $H_x $ is finitely generated. We say it satisfies the {\bf strong f.g.i.p. (s.f.g.i.p.)} if it satisfies the f.g.i.p. and if for every finitely generated subgroup $H$ of $G$ there are finitely many $H$-orbits $H\cdot x\subset X$ such that $H_x\neq \{1\}$. Any group pair $(F,X)$ with a free group $F$   and $G_x $ finitely generated for each $x\in X$ satisfies the f.g.i.p. If additionally $X$ consists of finitely many (non-regular) $F$-orbits, it also satisfies the s.f.g.i.p.
The homological coherence of $\pi(\mathcal{P})$ can be promoted to   coherence using the Cohen-Lyndon property under the following circumstances. The next corollary is a generalization  of \cref{promotion}
\begin{cor}\label{teo: hom_coh_plus_CLA_coherence}
  Let $G$ be a coherent group, and let $\mathcal{P} = (G, X)$ be a group pair satisfying the f.g.i.p.\ and the Cohen-Lyndon property, and such that for every $x \in X$, the group $G_x $ is locally indicable. 
Then every subgroup of $\pi(\mathcal{P})$ of type $\mathrm{FP}_2(\mathbb{Q})$ is finitely presented.
\end{cor}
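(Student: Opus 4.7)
Let $\bar H$ be a subgroup of $\pi(\mathcal P)$ of type $\FP_2(\Q)$, and let $\rho \colon G \to \pi(\mathcal P)$ denote the quotient map. The plan is to lift $\bar H$ to a finitely presented subgroup of $G$, describe the kernel via the Cohen--Lyndon property, and then use the $\FP_2(\Q)$ hypothesis together with local indicability to upgrade finite rational generation of the relation module to normal finite generation of the kernel.

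First I would choose a finitely generated subgroup $H_0 \leqslant G$ with $\rho(H_0)=\bar H$; by coherence of $G$, $H_0$ is finitely presented. Let $K_0 := H_0 \cap N_{\mathcal P}$, so that $\bar H = H_0/K_0$. Applying \cref{CL_kernel} to the subpair $(H_0, H_0\cdot x_0) \subset \mathcal P$ gives a Kurosh-type decomposition
\[
K_0 \;=\; F_0 \ast \big(\ast_{\alpha \in A_0} L_\alpha\big),
\]
where $F_0$ is free and each $L_\alpha = (H_0)_{x_\alpha}$ is a non-trivial stabiliser. By the f.g.i.p.\ each $L_\alpha$ is finitely generated; by coherence of $G$ each $L_\alpha$ is then finitely presented. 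Since $L_\alpha \leqslant G_{x_\alpha}$ is a subgroup of a locally indicable group, $L_\alpha$ is itself locally indicable, and in particular $L_\alpha^{\ab}\otimes\Q \neq 0$ whenever $L_\alpha \neq 1$.

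Next I would invoke the standard characterisation: since $H_0$ is finitely presented and $\bar H = H_0/K_0$ is of type $\FP_2(\Q)$, the relation module $K_0^{\ab}\otimes \Q$ is finitely generated as a $\Q\bar H$-module. The decomposition above yields
\[
K_0^{\ab}\otimes \Q \;=\; (F_0^{\ab}\otimes \Q) \;\oplus\; \bigoplus_{\alpha \in A_0} (L_\alpha^{\ab}\otimes \Q),
\]
and the $\bar H$-action permutes the summands $L_\alpha^{\ab}\otimes \Q$ according to its natural action on the set of conjugacy classes of non-trivial stabilisers of $H_0$ on $X$. Each $\bar H$-orbit contributes a $\Q\bar H$-module induced from the $\bar H$-stabiliser of a point in that orbit, and every such contribution is non-zero because $L_\alpha^{\ab}\otimes\Q \neq 0$. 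Consequently, finite generation as a $\Q\bar H$-module forces only finitely many $\bar H$-orbits of indices $\alpha$; that is, the pair $(H_0, X)$ effectively satisfies the s.f.g.i.p.

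Having reduced to finitely many $L_\alpha$'s, say $L_1,\dots,L_n$, one per $H_0$-conjugacy class, I would form the intermediate quotient
\[
\widetilde H \;:=\; H_0 \big/ \normal{L_1,\dots,L_n},
\]
which is finitely presented since $H_0$ is finitely presented and each $L_i$ is finitely generated. Applying \cref{CL_kernel} to the enlarged subpair $\big(H_0, H_0\cdot x_0 \sqcup \bigsqcup_{i=1}^{n} H_0\cdot x_{\alpha_i}\big)$, the kernel of the induced surjection $\widetilde H \twoheadrightarrow \bar H$ is a free group. The main obstacle will be showing that this free kernel is normally finitely generated in $\widetilde H$: in general $\FP_2(\Q)$ does not imply finite presentability, but here the torsion-freeness of a free kernel together with the local indicability and the Cohen--Lyndon structure (which rules out the Bestvina--Brady-type pathologies) should promote finite $\Q\bar H$-generation of its abelianisation to finite $\Z\bar H$-generation. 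Once this is established, $\bar H$ is finitely presented, completing the proof.
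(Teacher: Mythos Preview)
Your approach is on the right track through step 5, but the ``main obstacle'' you identify at step 6 is a genuine gap, and your proposed resolution does not work. Having $\widetilde H$ finitely presented with free kernel $F'$ and $\Q\otimes F'_{ab}$ finitely generated as a $\Q\overline H$-module does \emph{not} let you conclude that $\overline H$ is finitely presented: even finite $\Z\overline H$-generation of $F'_{ab}$ would not give normal finite generation of $F'$ in $\widetilde H$. Your appeal to torsion-freeness and local indicability to ``rule out Bestvina--Brady-type pathologies'' is not an argument---nothing in the setup prevents such behaviour once you have passed to the quotient $\widetilde H$.

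The paper sidesteps this obstacle entirely by making a sharper choice of subpair. Instead of arranging merely that the kernel's rational abelianisation is finitely generated, one arranges that it \emph{vanishes}. Starting from your $H_0$, pick $k_1,\dots,k_m\in K_0 = H_0\cap N_{\mathcal P}$ whose classes generate $\Q\otimes K_0^{ab}$ as a $\Q\overline H$-module; since each $k_i\in N_{\mathcal P}$, write $k_i$ as a product of elements $g_{ij}\in G_{x_{ij}}$. Now enlarge $H_0$ to $H=\langle H_0,\,g_{ij}\rangle$---crucially, $\rho(H)=\overline H$ still holds because each $g_{ij}\in N_{\mathcal P}$---and take $Y=H\cdot x_0\cup\bigcup_{i,j}H\cdot x_{ij}$. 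One then checks that $\Q\otimes(\ker\pi_\kappa)_{ab}=0$ for $\mathcal Q=(H,Y)$. At this point \cref{CL_kernel} gives $\ker\pi_\kappa$ as a free product of a free group and groups $H_x$; by the f.g.i.p.\ each $H_x$ is finitely generated and hence, being locally indicable, has infinite rational abelianisation if nontrivial. Since $\Q\otimes(\ker\pi_\kappa)_{ab}=0$, every free factor must be trivial, so $\ker\pi_\kappa=1$ and $\overline H\cong\pi(\mathcal Q)$ is finitely presented.

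So the key move you are missing is not to prove more about the free kernel you produced, but to enlarge $H_0$ within $\rho^{-1}(\overline H)$ so that the relators themselves become visible as products of stabiliser elements.
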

\begin{proof}
Let $\overline H$ be a  subgroup of $\pi(\mathcal{P})$ of type $\mathrm{FP}_2(\mathbb{Q})$. Then there exists a finitely generated subpair $\mathcal Q=(H,Y)\xrightarrow{\kappa} \mathcal P$ with $H$ finitely generated such that $\im \pi_{\kappa}=\overline H$ and $\Q\otimes_{\Z}(\ker \pi_{\kappa})_{\ab}=0$. By \cref{CL_kernel}, $\ker \pi_{\kappa}$ is the free product of a free group and some copies of $H_x$. 
Since $G$ satisfies the f.g.i.p., all the groups $H_x$ are finitely generated. 
Moreover, since each $G_x$ is locally indicable, $H_x$ has infinite abelianization if it is nontrivial.
Thus, $\ker \pi_{\kappa}=\{1\}$. Hence $\overline H\cong \pi(\mathcal Q)$ is finitely presented.
\end{proof}
\subsection{Proof of Theorem \ref{main}}
In the case where $w$ is a proper power, the theorem is proved in \cite{HH23}. Assume now that $w$ is not a proper power.

By \cref{one-relator_facts}(3), the group $G = A * B / \normal{w}$ is locally indicable. Therefore, by \cite{JL20}, it satisfies the strong Atiyah conjecture. Set
\[
X = G \cdot x_0 \sqcup G / A \sqcup G / B.
\]
By \cref{onerelatorpair}, we have $\operatorname{cd}_{\mathbb{Q}}(G, X) \leq 2$, and by \cref{onerelatorflat}, $b_2^{(2)}(G, X) = 0$. Applying \cref{cohomologicalcoherence}, we conclude that $G$ is homologically coherent over $\mathbb{Q}$.

Now consider the group pair $\mathcal{P} = (A * B, A * B / \langle w\rangle)$. Since $A$ and $B$ are coherent, their free product $A * B$ is coherent. By \cref{one-relator_CL}, the pair $\mathcal{P}$ is Cohen–Lyndon.   Therefore, applying \cref{promotion}, we conclude that $G \cong \pi(\mathcal{P})$ is coherent.

\subsection{The Cohen--Lyndon property and 2-complexes}

We say that a 2-complex $X$ has the Cohen--Lyndon property if its associated group pair $\mathcal{P}_X$ does.
We may use \cref{CL_kernel} to prove the following curious property of branched immersions of 2-complexes.

\begin{pro}
If $X$ is a 2-complex with the Cohen-Lyndon property and $\phi\colon Y\to X$ is a branched immersion, then $\ker(\phi_*)$ is free.
\end{pro}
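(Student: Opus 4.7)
The plan is to apply the Cohen--Lyndon kernel decomposition from \cref{CL_kernel} to the induced map of group pairs and then check that every ``stabiliser factor'' that appears is cyclic.

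First, by \cref{branched_immersion_subpair}, the branched immersion $\phi\colon Y\to X$ induces an injective map of group pairs $\phi_\#\colon \mathcal{P}_Y\to \mathcal{P}_X$, so $\mathcal{P}_Y=(F_Y,\mathcal{A}_Y)$ is a subpair of $\mathcal{P}_X=(F_X,\mathcal{A}_X)$. Under the identifications $\pi(\mathcal{P}_Y)\cong\pi_1(Y)$ and $\pi(\mathcal{P}_X)\cong\pi_1(X)$, the homomorphism $\pi_{\phi_\#}$ is exactly $\phi_*$. Thus $\ker(\phi_*)=\ker(\pi_{\phi_\#})$.

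Next, since $X$ has the Cohen--Lyndon property, the pair $\mathcal{P}_X$ does. Applying \cref{CL_kernel} with $\mathcal{P}=\mathcal{P}_X$, $\mathcal{Q}=\mathcal{P}_Y$ and $\kappa=\phi_\#$, we obtain that $\ker(\phi_*)$ splits as a free product of a free group together with a (possibly empty) collection of subgroups of the form $(F_Y)_x$ for various $x\in \mathcal{A}_X\setminus \mathcal{A}_Y$.

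The key observation is that each stabiliser $(F_X)_x$ (for $x$ in some orbit $F_X/\langle w_i\rangle\subset\mathcal{A}_X$) is conjugate in $F_X$ to $\langle w_i\rangle$, which is infinite cyclic because $F_X$ is free and $w_i\neq 1$. Consequently $(F_Y)_x = F_Y\cap (F_X)_x$ is a subgroup of an infinite cyclic group, hence either trivial or infinite cyclic. A free product of a free group with a family of infinite cyclic groups is again free, so $\ker(\phi_*)$ is free as claimed.

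The only subtlety I anticipate is the verification that each $(F_X)_x$ is genuinely cyclic: this relies on the fact that the attaching words $w_i$ live in a free group and are therefore of infinite order, not on any assumption that they fail to be proper powers. With this observation in hand the argument is essentially a direct application of the Cohen--Lyndon kernel structure.
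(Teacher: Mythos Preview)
Your argument is correct and follows exactly the same route as the paper: identify $\pi_{\phi_\#}$ with $\phi_*$, use \cref{branched_immersion_subpair} to see $\mathcal{P}_Y$ is a subpair, and then invoke \cref{CL_kernel} together with the observation that each stabiliser $(F_X)_x$ is infinite cyclic. The paper's proof is simply a terse version of yours.
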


\begin{proof}
Note that $\pi(\mathcal{P}_X) = \pi_1(X)$, $\pi(\mathcal{P}_Y) = \pi_1(Y)$ and $\pi_{\phi_{\#}} = \phi_*$. By \cref{branched_immersion_subpair} the induced map $\phi_{\#}$ is injective. Since $\pi_1(X)_x\cong \Z$ for all $x\in \mathcal{A}_X$, \cref{CL_kernel} implies that $\ker(\phi_*) = \ker(\pi_{\phi_{\#}})$ is a free group.
\end{proof}

\subsection {Cohen-Lyndon property over \texorpdfstring{$R$}{R}}

In this section we will introduce a variation of the Cohen-Lyndon property. 
To motivate the definition that we will introduce later, we present the following characterization of the Cohen Lyndon property.

\begin{pro}\label{CL_projective}
    Let $\mathcal P=(G,X)$ be a group pair. Then $\mathcal P$ has the Cohen-Lyndon property if and only if $\omega_{\Z}({\St_{\mathcal P}})$ is projective as a $\mathbb Z N_{\mathcal P}$-module.
\end{pro}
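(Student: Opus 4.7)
The plan is to pass through the relative cohomological dimension: by definition $\cd_{\Z}(N_{\mathcal P}, \St_{\mathcal P}) = \prd_{\Z N_{\mathcal P}}(\omega_{\Z}(\St_{\mathcal P})) + 1$, so the projectivity of $\omega_{\Z}(\St_{\mathcal P})$ as a $\Z N_{\mathcal P}$-module is exactly the condition $\cd_{\Z}(N_{\mathcal P}, \St_{\mathcal P}) \leqslant 1$. The proposition will then follow, in both directions, from \cref{cd=1} (Dicks's characterisation of group pairs of cohomological dimension one) together with a short manipulation of normal closures in free products.

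For the $(\Rightarrow)$ direction I would start from the Cohen--Lyndon assumption $N_{\mathcal P} = \ast_{K \in T} K$. Since free factors in a free product of nontrivial groups are self-normalising, $N_{N_{\mathcal P}}(K) = K$ for every $K \in T$; consequently $\St_{\mathcal P}$ as an $N_{\mathcal P}$-set is isomorphic to $\bigsqcup_{K \in T} N_{\mathcal P}/K$. The decomposition $N_{\mathcal P} \cong 1 \ast (\ast_{K\in T} K)$ then realises the normal form of \cref{cd=1} with trivial free group, giving $\cd_{\Z}(N_{\mathcal P}, \St_{\mathcal P}) \leqslant 1$, so $\omega_{\Z}(\St_{\mathcal P})$ is projective (in fact, it will turn out to be free of rank $|T|-1$).

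For the $(\Leftarrow)$ direction, assume $\omega_{\Z}(\St_{\mathcal P})$ is projective, so that $\cd_{\Z}(N_{\mathcal P}, \St_{\mathcal P}) \leqslant 1$. By \cref{cd=1} there exist a free group $F$ and a complete set $T \subset \St_{\mathcal P}$ of $N_{\mathcal P}$-orbit representatives with
\[
N_{\mathcal P} \cong F \ast \Bigl(\ast_{K \in T} N_{N_{\mathcal P}}(K)\Bigr).
\]
To promote this to the Cohen--Lyndon property, I would invoke the defining identity $N_{\mathcal P} = \langle \St_{\mathcal P}\rangle$: since $T$ meets every $N_{\mathcal P}$-orbit of $\St_{\mathcal P}$, the normal closure of $\bigcup_{K \in T} K$ inside $N_{\mathcal P}$ coincides with $\langle \St_{\mathcal P}\rangle = N_{\mathcal P}$. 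Because $K \triangleleft N_{N_{\mathcal P}}(K)$, quotienting the displayed free-product decomposition by this normal closure yields
\[
1 \;\cong\; N_{\mathcal P}/N_{\mathcal P} \;\cong\; F \ast \Bigl(\ast_{K \in T} N_{N_{\mathcal P}}(K)/K\Bigr),
\]
and a free product of groups is trivial only if every factor is trivial. This forces $F = 1$ and $N_{N_{\mathcal P}}(K) = K$ for every $K \in T$. Substituting back gives $N_{\mathcal P} = \ast_{K\in T} K$, which is precisely the Cohen--Lyndon property.

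The main workhorse is \cref{cd=1}; everything else is a brief group-theoretic computation that simultaneously kills the free factor $F$ and the excess of the normaliser $N_{N_{\mathcal P}}(K)$ over $K$. The principal subtlety will be bookkeeping of degenerate cases, in particular the case $|T|=1$ (where $\omega_{\Z}(\St_{\mathcal P})=0$ and the statement is tautological) and the question of whether a trivial stabiliser can occur in $\St_{\mathcal P}$, which would break self-normalisation of free factors; in the group pairs of interest (such as those coming from one-relator products of locally indicable groups) all relevant stabilisers are nontrivial, so these issues do not arise.
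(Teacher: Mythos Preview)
Your proof is correct and follows essentially the same route as the paper's: both directions rest on \cref{cd=1}, and in the $(\Leftarrow)$ direction you kill the extraneous free factor $F$ and the excess $N_{N_{\mathcal P}}(K)/K$ via the identity $N_{\mathcal P} = \langle \St_{\mathcal P}\rangle$, exactly as the paper does (though you spell out the quotient free product more explicitly). Your closing caveat about trivial stabilisers is apt---the paper's assertion ``$St_N(K)=K$'' tacitly assumes each $K\in T$ is nontrivial, and you are right that this holds in all the applications.
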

\begin{proof}
Put $N=N_{\mathcal P}$. Assume first that $(G,X)$ satisfies the Cohen--Lyndon property. Then  there exists a complete set of $N$-orbit representatives $T \subset \St_{\mathcal P}$ such that $N=  \ast_{K\in T} K$.
Observe that $St_N(K)=K$ for every $K\in T$. Therefore, 
by \cref{cd=1}, $\omega_{\mathbb{Z}}(\St_{\mathcal{P}})$ is a projective $\mathbb{Z}N$-module.

Conversely, if $\omega_{\mathbb{Z}}(\St_{\mathcal{P}})$ is a projective $\mathbb{Z}N$-module, then \cref{cd=1} implies that 
$N \cong F * \Bigl(*_{t\in T} N_t\Bigr)$
   for some free group $F$ and some complete set of $N$-orbit representatives $T \subset \St_{\mathcal P}$. Since the normal subgroup of $N$ generated by $\{t:t\in T\}$ coincides with $N$, we conclude that $F=1$ and $t=N_t$ for all $t\in T$. Thus,
$\mathcal{P}$ has the Cohen--Lyndon property.
\end{proof}

Let $\mathcal{P} = (G,X)$ be a group pair. We say that $\mathcal{P}$ satisfies the \textbf{Cohen--Lyndon property over $R$} if  
\begin{enumerate}
    \item $\omega_{R}(\St_{\mathcal{P}})$ is projective  as an $RN_{\mathcal{P}}$-module, and 
    \item $N_{N_{\mathcal{P}}}(K) = K$ for every $K \in \St_{\mathcal{P}}$. 
\end{enumerate}

\begin{rem}
    Note that, in the definition above, if $K\in \St_{\mathcal{P}}$ is infinite (for example, if $G$ is torsion-free), then $N_{N_{\mathcal{P}}}(K) = K$ for the following reason. \cref{cd=1} implies that $N_{\mathcal{P}}$ acts on a tree $\mathcal{T}$ with $X\subset V(\mathcal{T})$ and with finite edge stabilisers. Thus, if $g\in N_{N_{\mathcal{P}}}(K)$ and $x\in X$ so that $G_x = K$, then $K$ stabilises the path in $\mathcal{T}$ connecting the vertex $x$ with the vertex $g\cdot x$ in $\mathcal{T}$. Since edge stabilisers are finite, if $K$ is infinite we must have that $g\cdot x = x$ and so $g\in K$.
\end{rem}

\begin{rem}
Arguing as in the proof of \cref{CL_projective} and using \cref{cd=1}, one can show that if $G$ is torsion-free, then a group pair $\mathcal{P} = (G, X)$ has the Cohen–Lyndon property if and only if $\omega_{\mathbb{Q}}(\St_{\mathcal{P}})$ is projective as a $\mathbb{Q} N_{\mathcal{P}}$-module.
\end{rem}

\section{Graphs of group pairs and coherent-by-cyclic groups}
\label{sec:extensions}

In this section we will prove \cref{hyperbolic_extension}.
We first prove a general statement about graphs of groups pairs. We then use this to obtain a general criterion for (homological) coherence of extensions by $\Z$. In the last section we combine this criterion with several well-known facts about hyperbolic groups to prove the theorem. In this section   group pairs are   considered without marked points.

\subsection{A long exact sequence for graphs of group pairs}
\label{sec:graphs_of_group_pairs}

In this section we define graphs of group pairs and prove an analogue of \cref{Chiswell} in this setting.

If $(G, X)$ is a group pair, we say a subpair $(H, Y)\subset (G, X)$ is {\bf induced} if $g\cdot Y\cap Y = \emptyset$ for all $g\in G - H$. In other words, for each $y\in Y, g\in G$, we have that $g\cdot y\in Y$ if and only if $g\in H$. We say that a map of group pairs $\phi\colon(H, Y) \to (G, X)$ is induced if it is an inclusion and if $(\phi(H), \phi(Y))\subset (G, X)$ is induced.

We define a {\bf graph of group pairs} to be a tuple 
\[
\mathcal{G} = (\Gamma, \{(G_v, X_v)\}_{v\in V(\Gamma)}, \{(G_e, X_e)\}_{e\in E(\Gamma)}, \{\partial_e^{\pm}\})
\]
where here $\partial_e^{\pm}\colon (G_e, X_e)\to (G_{e^{\pm}}, X_{e^{\pm}})$ are induced maps of group pairs. The fundamental group of a graph of group pairs is the fundamental group of the underlying graph of groups structure.

We make the following observation which motivates the induced assumption on the maps of group pairs.

\begin{lem}
\label{sep_maps}
If $\phi\colon (H, Y)\to (G, X)$ is an induced map of group pairs and $R$ is a ring, the induced map
\[
RG \otimes_{R[H]}R[Y] = {}^GR[Y]\to R[X]
\]
given by $g\otimes y\mapsto g\cdot \phi(y)$ is injective.
\end{lem}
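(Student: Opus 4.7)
The plan is to reduce the question to the disjointness of certain $G$-orbits in $X$, which is exactly what the induced hypothesis buys us. First I would decompose everything by orbits. Since $\phi$ is an inclusion, identify $H$ with its image in $G$ and $Y$ with its image in $X$. Pick a set of representatives $\{y_i\}_{i\in I}$ for the $H$-orbits in $Y$, so that as an $RH$-module $R[Y] = \bigoplus_{i\in I} R[H\cdot y_i]$, and each summand is isomorphic to $R[H/H_{y_i}]$.

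The key observation supplied by the induced hypothesis is that $G_{y_i} = H_{y_i}$ for every $i$. Indeed, if $g\in G$ fixes $y_i\in Y$, then $g\cdot y_i = y_i \in g\cdot Y \cap Y$, so $g\in H$. Hence $G_{y_i}\subset H$, and the reverse inclusion is trivial. Together with the standard identification $RG\otimes_{RH} R[H/H_{y_i}] \cong R[G/G_{y_i}]$, this gives
\[
{}^GR[Y] \;=\; \bigoplus_{i\in I} RG\otimes_{RH} R[H\cdot y_i] \;\cong\; \bigoplus_{i\in I} R[G/G_{y_i}] \;\cong\; \bigoplus_{i\in I} R[G\cdot y_i],
\]
where the last identification is the one induced by $g\otimes y_i \mapsto g\cdot y_i$, i.e.\ precisely the map we want to study.

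It therefore suffices to check that the $G$-orbits $\{G\cdot y_i\}_{i\in I}$ are pairwise disjoint as subsets of $X$; once this is established, $\bigoplus_{i\in I} R[G\cdot y_i]$ sits as a direct summand of $R[X] = \bigoplus_j R[G\cdot x_j]$ (the full $G$-orbit decomposition of $X$) and the map is injective. Disjointness is a direct application of the induced hypothesis: if $g\cdot y_i = y_j$ for some $g\in G$, then $g\cdot Y\cap Y \ni y_j$, so $g\in H$, and hence $H\cdot y_i = H\cdot y_j$, forcing $i=j$.

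I do not expect any real obstacle here; the whole content of the lemma is that the induced condition is exactly the combinatorial condition needed to make the comparison map $RG\otimes_{RH}R[Y]\to R[X]$ an orbit-by-orbit inclusion. The only step requiring any care is keeping track of the identification $G_{y_i}=H_{y_i}$, which is what rules out the naively possible failure where different $H$-orbits of $Y$ collapse into the same $G$-orbit inside $X$.
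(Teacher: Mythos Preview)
Your proof is correct and is precisely the argument one would expect: the induced hypothesis forces $G_{y_i}=H_{y_i}$ and keeps distinct $H$-orbit representatives in distinct $G$-orbits, which together make the map an orbit-by-orbit inclusion. The paper itself offers no proof for this lemma, stating it only as an ``observation'' motivating the induced condition, so there is nothing to compare beyond noting that your write-up supplies exactly the details the authors suppressed.
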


If $H\leqslant G$ are groups and $Y$ is an $H$-set, we may define a left $G$-set
\[
G\times_HY = G\times Y/\sim
\]
where $\sim$ is the equivalence relation given by $(g_1, y_1) \sim (g_2, y_2)$ if $g_2^{-1}g_1\in H$ and $g_2^{-1}g_1\cdot y_1 = y_2$. Note that we have
\[
{}^GR[Y] \cong R[G\times_HY]
\]
as left $RG $-modules.

\begin{teo}
\label{gogp_sequence}
Let $R$ be a ring and let $\mathcal{G} = (\Gamma, \{(G_v, X_v)\}_{v\in V}, \{(G_e, X_e)\}_{e\in E}, \{\partial_e^{\pm}\})$ be a graph of group pairs. Denoting by $G$ the fundamental group of $\mathcal{G}$, the following sequence is exact:
\[
\begin{tikzcd}
0 \arrow[r] & {\bigoplus_{e\in E^+}{}^G\omega_R(X_e)} \arrow[r, "\partial"] & {\bigoplus_{v\in V}{}^G\omega_R(X_v)} \arrow[r] & {\omega_R(X)} \arrow[r] & 0 
\end{tikzcd}
\]
where $\partial$ is the restriction of the map
\begin{align*}
\partial \colon\bigoplus_e{}^GR[X_e] &\to \bigoplus_v{}^GR[X_v],\\
s\otimes x_e&\mapsto  s\cdot t_e\otimes \partial_e^+(x_e)-s\otimes \partial_e^-(x_e)
\end{align*}
and where
\[
X = \left(\bigsqcup_{v}G\times_{G_v} X_{v}\right)/\sim
\]
is a left $G$-set. Here $\sim$ is the equivalence relation generated by 
\[
(g, \partial_e^-(x)) \sim (g t_e, \partial_e^+(x))
\]
for $e\in E^+$ and $(g, x)\in G\times_{G_e} X_e$.

\end{teo}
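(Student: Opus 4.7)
The plan is to deduce the target sequence from two inputs: the analogous \emph{unreduced} sequence
\[
0 \to \bigoplus_{e\in E^+}{}^GR[X_e] \xrightarrow{\partial} \bigoplus_{v\in V}{}^GR[X_v] \to R[X] \to 0,
\]
and Chiswell's exact sequence (\cref{Chiswell}) for the underlying graph of groups, arranged as the middle and bottom rows of a $3\times 3$ diagram. The vertical columns will be the standard augmentation sequences: for each $v$,
\[
0 \to {}^G\omega_R(X_v)\to {}^GR[X_v]\to RG\otimes_{RG_v}R \to 0,
\]
which is exact since $RG$ is free (hence flat) as a right $RG_v$-module, and likewise for edges; the rightmost column is the augmentation sequence of $R[X]$. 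Once the unreduced row is known to be exact, the $3\times 3$ lemma combined with \cref{Chiswell} produces the target sequence.

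For the unreduced row, surjectivity of the right-hand map and the inclusion $\im\partial\subset\ker$ are immediate from the defining equivalence relation on $X$, while the reverse inclusion holds because the kernel is generated as an $R$-module precisely by the elements $(g,\partial_e^-(x))-(gt_e,\partial_e^+(x)) = -\partial(g\otimes x)$. Thus the only nontrivial task is to prove injectivity of $\partial$.

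To this end, I would work with the Bass--Serre tree $\mathcal{T}$ of $\mathcal{G}$, with $V(\mathcal{T})=\bigsqcup_v G/G_v$ and $E(\mathcal{T})^+=\bigsqcup_{e\in E^+}G/G_e$. Writing $Y=\bigsqcup_v G\times_{G_v}X_v$ and $Z=\bigsqcup_{e\in E^+}G\times_{G_e}X_e$, the natural projections $p\colon Y\to V(\mathcal{T})$ and $q\colon Z\to E(\mathcal{T})^+$ decompose $R[Y]$ and $R[Z]$ fibrewise over $\mathcal{T}$. For each $G$-orbit $\mathcal{O}\subset X$, with preimages $Y_\mathcal{O}$, $Z_\mathcal{O}$, I would build a graph $\Gamma_\mathcal{O}$ with vertex set $Y_\mathcal{O}$ and positively oriented edge set $Z_\mathcal{O}$, where an edge $(g,x)\in G\times_{G_e}X_e$ runs from $(g,\partial_e^-(x))$ to $(gt_e,\partial_e^+(x))$. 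The restriction of $\partial$ to $R[Z_\mathcal{O}]\to R[Y_\mathcal{O}]$ is then precisely the cellular boundary of the oriented graph $\Gamma_\mathcal{O}$.

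The main step, and the hard part, is to show that $\Gamma_\mathcal{O}$ is a forest. The plan is to establish that the tautological morphism $\Gamma_\mathcal{O}\to\mathcal{T}$ induced by $p$ and $q$ is locally injective at every vertex: if two edges of $\Gamma_\mathcal{O}$ at a vertex $y$ project to the same edge of $\mathcal{T}$, then the induced hypothesis on $\partial_e^{\pm}$, together with the injectivity and $G_e$-equivariance of these maps, forces the two edges to coincide already in $G\times_{G_e}X_e$. A locally injective graph morphism into a tree has a forest as domain, since any cycle would project to a nontrivial closed walk without backtracking, which is impossible in $\mathcal{T}$. The cellular boundary of a forest is injective on $R$-coefficients, so $\partial$ is injective on each $R[Z_\mathcal{O}]$, and hence on all of $R[Z]$ by summing over orbits.
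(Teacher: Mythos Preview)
Your proof is correct and follows the same architecture as the paper: both build the $3\times 3$ diagram whose middle row is the unreduced sequence and whose bottom row is Chiswell's sequence, reducing everything to the injectivity of $\partial$ on $\bigoplus_e{}^GR[X_e]$. The only difference is cosmetic, in how that injectivity is argued: the paper takes a nonzero $r$, looks at the finite set of Bass--Serre edges supporting it, picks a leaf of the subtree they span, and reads off a nonzero summand of $\partial(r)$ there; you instead repackage the same tree geometry by observing that each orbit-graph $\Gamma_{\mathcal{O}}$ immerses into $\mathcal{T}$ (your local-injectivity check is exactly the computation underlying \cref{sep_maps}) and hence is a forest, so its cellular boundary is injective---these are two phrasings of the same fact.
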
 

\begin{proof}
Abusing notation, for each $e\in E^+$ denote by $\partial_e^{\pm}$ the map
\begin{align*}
{}^{G_{e^{\pm}}}R[X_e] &\to R[X_{e^{\pm}}]\\
g\otimes x &\mapsto g\cdot \partial_e^{\pm}(x)
\end{align*}
By \cref{sep_maps}, $\partial_e^{\pm}$ is injective. Since $RG$ is flat as an $RG_v$-module (for each $v\in V = V(\Gamma)$), the following map is also injective:
\[
\Id\otimes\partial_e^{\pm}\colon {}^GR[X_e]\to {}^GR[X_{e^{\pm}}].
\]
Similarly, by flatness of $RG$, we have exact sequences
\[
\begin{tikzcd}
0 \arrow[r] & {}^G\omega_R(X_e) \arrow[r] & {{}^GR[X_e]} \arrow[r, "\epsilon"] & {}^GR \arrow[r] & 0 \\
0 \arrow[r] & {}^G\omega_R(X_v) \arrow[r] & {{}^GR[X_v]} \arrow[r, "\epsilon"] & {}^GR \arrow[r] & 0
\end{tikzcd}
\]
Now consider the map
\begin{align*}
\partial \colon\bigoplus_{e\in E^+}{}^GR[X_e] &\to \bigoplus_{v\in V}{}^GR[X_v],\\
s\otimes x_e&\mapsto  s\cdot t_e\otimes \partial_e^+(x_e)-s\otimes \partial_e^-(x_e)
\end{align*}
as in the statement. By construction, we have that the following diagram commutes
\[
\begin{tikzcd}
{\bigoplus_{e\in E^+}{}^GR[X_e]} \arrow[r, "\partial"] \arrow[d, "\epsilon"] & {\bigoplus_{v\in V}{}^GR[X_v]} \arrow[d, "\epsilon"] \\
\bigoplus_{e\in E^+}{}^GR \arrow[r, "\delta"]                              & \bigoplus_{v\in V}{}^GR                             
\end{tikzcd}
\]
where $\delta$ is defined in \cref{Chiswell}.
We now show that $\partial$ is injective.

Let $0\neq r\in \bigoplus_{e\in E^+}{}^GR[X_e]$.  For each $e\in E^+$, let $T_e\subset G$ be a complete set of coset representatives for $G_e$. Using the fact that
\[
{}^GR[X_e] = \bigoplus_{t\in T_e}t\otimes R[X_e],
\]
we may write
\[
r = \sum_{e\in E^+}\sum_{t\in T_e}t\otimes r_{e, t}
\]
where $r_{e, t}\in R[X_e]$. Of course, all but finitely many $r_{e, t}$ are equal to 0 (and at least one is not equal to $0$). Recall that the Bass--Serre tree $\mathcal{T}$ for the graph of groups $\mathcal{G}$ is the tree with edge set $\bigsqcup_{e\in E^+}G/G_e$ and with vertex set $\bigsqcup_{v\in V}G/G_v$. Since $\mathcal{T}$ is a tree and the number of elements $r_{e, t}$ that are non-zero is finite, the collection of edges $\mathcal{E}=\{tG_e\mid r_{e, t}\neq 0\}\subset E(\mathcal{T})$ is contained in a finite subtree of $\mathcal{T}$. In particular, there is an edge $tG_e\in \mathcal{E}$ (a leaf in this subtree) such that either $(tG_e)^- = tG_{e^{-}}\neq st_fG_{f^+}, sG_{f^-}$ or $(tG_e)^+ = tt_eG_{e^+}\neq st_fG_{f^+}, sG_{f^-}$ for all other edges $sG_f\in \mathcal{E}$. Therefore, $\partial(r)$ either contains a non-zero $\partial_e^{-}(t)\otimes R[X_{e^{-}}]$ summand, namely $-\partial^-_e(t)\otimes\partial^-_e(r_{e, t})$ or it contains a non-zero $\partial_e^{+}(t)\cdot t_e\otimes R[X_{e^{+}}]$ summand, namely $\partial^+_e(t)t_e\otimes\partial^+_e(r_{e, t})$. Thus, $\partial(r)\neq 0$ and so $\partial$ is injective. 

We note that we have isomorphisms:
\begin{align*}
\bigoplus_{e\in E^+}{}^GR[X_e]  &\cong \bigoplus_{e\in E^+}R[G\times_{G_e} X_e]\\
\bigoplus_{v\in V}{}^GR[X_v]  &\cong \bigoplus_{v\in V}R[G\times_{G_v} X_v]
\end{align*}
as $RG$-modules. With this description, we see that 
\[
\bigoplus_{v\in V}R[G\times_{G_v} X_v]/\im(\partial)\cong R[X]
\]
where $X$ is as in the statement of the theorem. Combining all of the above, we obtain the following diagram with exact rows and columns:
\[
\begin{tikzcd}
            & 0 \arrow[d]                                                                & 0 \arrow[d]                                                    & 0 \arrow[d]                       &   \\
0 \arrow[r] & {\bigoplus_{e}{}^G\omega_R[X_e]} \arrow[r] \arrow[d] \arrow[r] & {\bigoplus_v{}^G\omega_R[X_v]} \arrow[d] \arrow[r] & {\omega_R[X]} \arrow[d] \arrow[r] & 0 \\
0 \arrow[r] & {\bigoplus_e{}^GR[X_e]} \arrow[r, "\partial"] \arrow[d]                    & {\bigoplus_v{}^GR[X_v]} \arrow[r] \arrow[d]        & {R[X]} \arrow[r] \arrow[d]        & 0 \\
0 \arrow[r] & \bigoplus_{e}{}^GR \arrow[r, "\delta"] \arrow[d]               & \bigoplus_v{}^GR \arrow[r, "\epsilon"] \arrow[d]          & R \arrow[r] \arrow[d]             & 0 \\
            & 0                                                                          & 0                                                              & 0                                 &  
\end{tikzcd}
\]
This completes the proof.
\end{proof}

\begin{cor}
\label{mayer_vietoris}
Let $R$ be a ring and let $\mathcal{G} = (\Gamma, \{(G_v, X_v)\}_{v\in V}, \{(G_e, X_e)\}_{e\in E}, \{\partial_e^{\pm}\})$ be a graph of group pairs. Denoting by $G = \pi_1(\mathcal{G}, T)$ and by $X$ the $G$-set from \cref{gogp_sequence}, for any left $RG$-module $M$ there are long exact sequences:
\begin{align*}
\ldots\to &\Tor_{n+1}^{RG}(\omega_R(X), M) \to \bigoplus_{e\in E^+}\Tor_{n}^{RG_e}(\omega_R(X_e), M) \to \bigoplus_{v\in V}\Tor_n^{RG_v}(\omega_R(X_v), M)\to \ldots\\
\ldots\rightarrow &\bigoplus_{e\in E^+}\Ext^{n}_{RG_e}(\omega_R(X_e), M) \rightarrow \Ext^{n+1}_{RG}(\omega_R(X), M) \rightarrow \bigoplus_{v\in V}\Ext^{n + 1}_{RG_v}(\omega_R(X_v), M)\rightarrow \ldots
\end{align*}
\end{cor}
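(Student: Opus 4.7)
The plan is to apply standard derived-functor machinery to the short exact sequence
\[
0 \to \bigoplus_{e\in E^+}{}^G\omega_R(X_e) \to \bigoplus_{v\in V}{}^G\omega_R(X_v) \to \omega_R(X) \to 0
\]
supplied by \cref{gogp_sequence}, and then identify each induced-module term via Shapiro's lemma. Concretely, I would first apply the bifunctor $-\otimes_{RG} M$ and record the associated long exact $\Tor$-sequence: this already produces a long exact sequence whose end terms are $\Tor^{RG}_*(\omega_R(X), M)$ and whose middle terms are $\bigoplus_{e\in E^+}\Tor^{RG}_n({}^G\omega_R(X_e), M)$ and $\bigoplus_{v\in V}\Tor^{RG}_n({}^G\omega_R(X_v), M)$, using the fact that $\Tor$ commutes with arbitrary direct sums in the first argument.

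Next, I would invoke the change-of-rings/Shapiro isomorphism. Because $RG$ is a free right $RG_v$-module (a transversal for $G/G_v$ is a basis) and similarly a free right $RG_e$-module, any projective resolution of $\omega_R(X_v)$ over $RG_v$ induces, after applying $RG\otimes_{RG_v}-$, a projective resolution of ${}^G\omega_R(X_v)$ over $RG$. The adjunction $RG\otimes_{RG_v}-\dashv\mathrm{Res}^G_{G_v}$ then yields
\[
\Tor^{RG}_n({}^G\omega_R(X_v), M) \cong \Tor^{RG_v}_n(\omega_R(X_v), M), \qquad \Ext^{n}_{RG}({}^G\omega_R(X_v), M) \cong \Ext^{n}_{RG_v}(\omega_R(X_v), M),
\]
and the analogous identifications for each edge. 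Substituting these into the $\Tor$-sequence of the previous paragraph produces the first claimed long exact sequence.

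For the Ext sequence, the plan is the same but in reverse variance: apply $\Hom_{RG}(-, M)$ to the short exact sequence and pass to the long exact sequence of $\Ext^*_{RG}$. The middle terms naturally appear as $\Ext^{n}_{RG}(\bigoplus_e {}^G\omega_R(X_e), M)$ and $\Ext^{n}_{RG}(\bigoplus_v {}^G\omega_R(X_v), M)$; these convert direct sums into direct products, but under the implicit convention that $\Gamma$ is finite (which is the setting we care about in the applications) these coincide with direct sums, and one may substitute the Shapiro identification term by term to obtain the stated sequence.

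There is no substantive obstacle; the corollary is essentially bookkeeping once \cref{gogp_sequence} is in hand. The only delicate point is the Ext/direct-sum compatibility just mentioned, which one can either handle by assuming $\Gamma$ is finite, by interpreting $\bigoplus$ in the Ext sequence as $\prod$ in general, or by restricting attention to those $M$ for which the relevant Ext groups vanish for all but finitely many indices.
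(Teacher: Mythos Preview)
Your proposal is correct and is precisely the intended argument: the paper states the corollary without proof, leaving it as an immediate consequence of the short exact sequence in \cref{gogp_sequence} together with Shapiro's lemma, which is exactly what you do. Your observation about $\Ext$ turning direct sums into products is apt; the applications in the paper only use finite graphs (in fact a single loop), so the issue does not arise there.
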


As a corollary, we obtain the following bounds on the cohomological dimension.

\begin{cor}
\label{gp_dimension}
Let $(G, X)$ be the group pair as defined in \cref{gogp_sequence}. Then:
\begin{align*}
\cd_R(G, X) \leqslant \sup_{e, v}\{\cd_R(G_e, X_e) + 1, \cd_R(G_v, X_v)\}.
\end{align*}
\end{cor}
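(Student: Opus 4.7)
The plan is to read the bound directly off the Ext-version of the long exact sequence supplied by Corollary \ref{mayer_vietoris}. Recall that by definition $\cd_R(G,X) = \prd_{RG}(\omega_R(X)) + 1$, so writing $d = \sup_{e,v}\{\cd_R(G_e, X_e)+1,\ \cd_R(G_v, X_v)\}$, the inequality $\cd_R(G,X) \leq d$ is equivalent to the vanishing statement
\[
\Ext^{k}_{RG}(\omega_R(X), M) = 0 \quad \text{for every } k \geq d \text{ and every left } RG\text{-module } M.
\]

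To verify this, fix such $k$ and $M$, and consider the fragment
\[
\bigoplus_{e\in E^+} \Ext^{k-1}_{RG_e}(\omega_R(X_e), M) \longrightarrow \Ext^{k}_{RG}(\omega_R(X), M) \longrightarrow \bigoplus_{v\in V} \Ext^{k}_{RG_v}(\omega_R(X_v), M)
\]
of the long exact sequence from Corollary \ref{mayer_vietoris}. For each vertex $v$ one has $k \geq d \geq \cd_R(G_v, X_v) = \prd_{RG_v}(\omega_R(X_v)) + 1$, so $k > \prd_{RG_v}(\omega_R(X_v))$ and the right-hand term vanishes. For each edge $e$ one has $k - 1 \geq d - 1 \geq \cd_R(G_e, X_e) = \prd_{RG_e}(\omega_R(X_e)) + 1 - 1$, so $k - 1 > \prd_{RG_e}(\omega_R(X_e))$ and the left-hand term vanishes as well. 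Exactness then forces $\Ext^{k}_{RG}(\omega_R(X), M) = 0$, which finishes the proof.

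In short, this is a one-step diagram chase once Corollary \ref{mayer_vietoris} is in place; there is no real obstacle. The only subtlety worth flagging is bookkeeping: the convention $\cd_R(G,X) = \prd_{RG}(\omega_R(X)) + 1$ is what produces the asymmetric bound, with the extra $+1$ attached to the edge term rather than the vertex term. This mirrors the classical inequality $\cd_R(G) \leq \sup\{\cd_R(G_e)+1,\ \cd_R(G_v)\}$ for fundamental groups of graphs of groups, which is recovered by specialising to the case where each $X_v$ and $X_e$ is a single regular orbit.
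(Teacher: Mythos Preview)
Your argument is correct and is exactly the intended one: the paper states this corollary without proof, as an immediate consequence of the Ext long exact sequence of Corollary~\ref{mayer_vietoris}, and your derivation is precisely that. The only blemish is the typo in the edge computation where you write $\cd_R(G_e, X_e) = \prd_{RG_e}(\omega_R(X_e)) + 1 - 1$; the correct relation is $\cd_R(G_e, X_e) = \prd_{RG_e}(\omega_R(X_e)) + 1$, which still yields your conclusion $k-1 > \prd_{RG_e}(\omega_R(X_e))$ and the proof goes through unchanged.
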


\subsection{Coherence of cyclic extensions}

We shall now apply the results of the last section to the special case of extensions with $\Z$.

First we prove a technical result which explicitly describes the group pair structure from \cref{gogp_sequence} for extensions of certain group pairs by $\Z$.

\begin{pro}
\label{suspensions_cd2}
Let $G\cong H\rtimes_{\psi}\Z$ and suppose that $H$ splits as a graph of groups $\mathcal{H} = (\Gamma, \{H_v\}, \{H_e\}, \{\partial_e^\pm\})$ with finite edge groups and with infinite vertex groups that do not split non-trivially over finite groups. Put  
\[
X=\{gH_v g^{-1}:v\in V(\Gamma), g\in G\}.
\]
Then $G$ acts on $X$ (via left conjugation) and $(G,X)$ splits as an HNN-extension (of group pairs) with vertex and edge group pair $(H, X)$. Moreover, we have 
\begin{align*}
\cd_{\Q}(G, X) &\leqslant 2,\\
b_2^{(2)}(G, X) &= 0,
\end{align*}
and $N_G(gH_vg^{-1})\cong H_v\rtimes \Z$ if a positive power of $\psi$ sends $H_v$ to a conjugate of itself, $N_G(gH_vg^{-1}) \cong H_v$ otherwise.
\end{pro}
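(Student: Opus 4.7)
The plan is to exhibit $(G,X)$ as an HNN extension in the category of group pairs and then extract the homological information from \cref{gogp_sequence}. Writing $G = H\rtimes_\psi\Z$ with stable letter $t$ satisfying $\psi(h)=t^{-1}ht$, view $G$ as the fundamental group of the graph of group pairs with one vertex and one loop, both with pair $(H,X)$, boundary maps $\partial^- = \Id$ and $\partial^+$ given by $\psi$ on $H$ and by $y\mapsto t^{-1}yt$ on $X$ (this preserves $X$ since conjugation by $t$ permutes $G$-conjugates of the $H_v$'s). Both boundaries are induced maps of pairs. The $G$-set produced by \cref{gogp_sequence} is then identified $G$-equivariantly with $X$ via $\phi:(g,y)\mapsto gyg^{-1}$: surjectivity is immediate, and for injectivity, if $g_2^{-1}g_1 = h_0 t^n$ then the relations $(g,y)\sim(gt,\psi(y))$ and $(gh,y)=(g,hyh^{-1})$ transport $(g_1,x_1)$ to $(g_2, h_0\psi^{-n}(x_1)h_0^{-1}) = (g_2,x_2)$.

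To bound $\cd_\Q(G,X)$, \cref{gp_dimension} gives $\cd_\Q(G,X) \leq \cd_\Q(H,X) + 1$, so it suffices to show $\cd_\Q(H,X) \leq 1$. I would work with the Bass--Serre tree $\mathcal T_H$ of $\mathcal H$: the hypothesis on the $H_v$'s, combined with the standard principle that an infinite subgroup acting on a tree with finite edge stabilizers and not splitting non-trivially over finite subgroups must fix a vertex, forces $H_v$ to fix a unique vertex of $\mathcal T_H$. Hence $N_H(H_v) = H_v$, and $H_v$ is maximal among subgroups of $H$ not splitting non-trivially over finite groups. Since this property is preserved by $\Aut(H)$, $\psi(H_v)$ is again such a maximal subgroup and therefore $H$-conjugate to some $H_{v'}$; thus $\psi$ permutes the $H$-conjugacy classes of the $H_v$'s. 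This identifies $X$ with $V(\mathcal T_H)$ as $H$-sets (both equal $\bigsqcup_v H/H_v$ with matching stabilizers), and \cref{cd=1} applied to $\mathcal T_H$ (finite edge stabilizers invertible in $\Q$, no vertices outside $X$) gives $\cd_\Q(H,X) \leq 1$.

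From $\cd_\Q(H,X)\leq 1$ the $\Q H$-module $\omega_\Q(X)$ is projective, so $\beta_1^{\Q H}(\omega_\Q(X)) = 0$. Since $\Gamma$ is finite, $\omega_\Q(X)$ is finitely generated over $\Q G$, and $G/H \cong \Z$ is infinite amenable, so \cref{vanishing} yields $\beta_1^{\Q G}(\omega_\Q(X)) = b_2^{(2)}(G,X) = 0$. For the normalizer, $N_G(gH_vg^{-1}) = g N_G(H_v) g^{-1}$ reduces the question to $N_G(H_v)$; analyzing the sequence $1\to N_H(H_v) = H_v \to N_G(H_v) \to G/H \cong \Z$ and observing that $ht^n\in N_G(H_v)$ iff $\psi^n(H_v)$ is $H$-conjugate to $H_v$ shows the image is either trivial (giving $N_G(H_v) = H_v$) or $n_0\Z$ for minimal $n_0>0$ with this property (giving $N_G(H_v)\cong H_v\rtimes\Z$, the extension splitting since $\Z$ is free). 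The delicate step, which I expect to be the main obstacle, is the identification of $X$ with $V(\mathcal T_H)$ as $H$-sets: it crucially relies on the maximality of the $H_v$'s as non-splitting subgroups and is precisely where the hypothesis on vertex groups enters.
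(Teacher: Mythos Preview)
Your proposal is correct and follows essentially the same route as the paper: realise $(G,X)$ as an HNN extension of the pair $(H,X)$, use \cref{cd=1} on the Bass--Serre tree of $\mathcal H$ to get $\cd_\Q(H,X)\le 1$, feed this into \cref{gp_dimension} for the bound $\cd_\Q(G,X)\le 2$, and then invoke \cref{vanishing} for $b_2^{(2)}(G,X)=0$. In fact you supply more detail than the paper does on several points the paper leaves as ``clear'': the explicit verification that the $G$-set produced by \cref{gogp_sequence} is $X$, the uniqueness-of-fixed-vertex argument giving $N_H(H_v)=H_v$ and the maximality of the $H_v$'s (which is precisely what the paper packages into the bijection $\sigma$), and the normaliser computation via the short exact sequence with image in $G/H$. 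Your observation that finiteness of $\Gamma$ is needed to ensure $\omega_\Q(X)$ is finitely generated over $\Q G$ before applying \cref{vanishing} is a genuine care point; the paper uses \cref{vanishing} without comment, but in its only application (\cref{extension_coherence}) the graph comes from Dunwoody accessibility and is finite.
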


\begin{proof}
Since each vertex group of $\mathcal{H}$ is a maximal infinite subgroup of $H$ that does not split non-trivially over a finite subgroup, we note that there is a bijection $\sigma\colon V\to V$ such that for each $v\in V = V(\Gamma)$ there is an element $h_v\in H$ such that $\psi(H_{v})^{h_{v}} = H_{\sigma(v)}$. In particular, this implies that $\psi$ is also a bijection of $X$ and so we have a well-defined map of pairs
\begin{align*}
\phi\colon(H, X) &\to (H, X)\\
			(h,x) &\mapsto (\psi(h),\psi(x)).
\end{align*}
This map is clearly separating and so we have a graph of group pairs
\[
\mathcal{G} = (\Lambda, \{(H, X)\}, \{(H, X)\}, \{\partial^{\pm}\})
\]
where $\Lambda$ has a single vertex and a single edge and where $\partial^- = \Id$ and $\partial^+ = \phi$. Now, the formula from \cref{gogp_sequence}, implies that this is a decomposition of $(G,X)$. 

Applying \cref{gp_dimension} to the pair $(G,  X)$ and using the fact that $\cd_{\Q}(H, X) \leqslant 1$ by \cref{cd=1}, we see that $\cd_{\Q}(G, X)\leqslant 2$ and $\beta_1^{\Q H}(\omega_{\Q}(X)) = 0$, so $\beta_1^{\Q G}(\omega_{\Q}(X)) = 0$ by \cref{vanishing}. Hence $b_2^{(2)}(G, X) = 0$.

The fact about stabilisers is clear from the definition of $X$.
\end{proof}

\begin{cor}
\label{extension_coherence}
Let $G\cong H\rtimes_{\psi}\Z$ be a group satisfying the weak Atiyah conjecture and suppose that $H$ has type $\FP_2(\Z)$. If all subgroups $N\rtimes\Z \cong K\leqslant G$ with $N$ finitely generated and one-ended are (homologically) coherent (over $\Q$), then $G$ is (homologically) coherent (over $\Q$).
\end{cor}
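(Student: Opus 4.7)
The plan is to apply \cref{cohomologicalcoherence} to $G$ using the group pair furnished by \cref{suspensions_cd2}, after a preliminary case-split on the number of ends of $H$. If $H$ is finite, $G$ is virtually cyclic and hence coherent. If $H$ is two-ended, then $H$ is virtually $\Z$ and $G$ is virtually $\Z^2$, again coherent. If $H$ is one-ended, $G = H\rtimes\Z$ is itself of the form $N\rtimes\Z$ appearing in the hypothesis (with $N = H$) and the conclusion is immediate. Assume henceforth that $H$ has infinitely many ends.

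Since $H$ is of type $\FP_2(\Z)$, Dunwoody's accessibility theorem furnishes a cocompact $H$-tree $\mathcal{T}$ with finite edge stabilisers and vertex stabilisers of at most one end. The infinite vertex stabilisers $H_v$ are finitely generated one-ended groups which, by Stallings, do not split non-trivially over finite subgroups. As this intrinsic property is preserved by $\psi$, the automorphism permutes the $H$-conjugacy classes of the $H_v$; after replacing $\psi$ by a suitable power (and $G$ by a corresponding finite-index subgroup, which preserves the weak Atiyah conjecture and (homological) coherence (over $\Q$) in both directions) we may assume $\psi$ fixes each such class. Then \cref{suspensions_cd2} applies and produces a group pair $(G, X)$ with $X = \{g H_v g^{-1}\}$, satisfying $\cd_{\Q}(G, X) \leqslant 2$, $b_2^{(2)}(G, X) = 0$, and with stabilisers $G_x$ isomorphic to $H_v$ or $H_v \rtimes \Z$---in both cases contained in the subgroup $H_v \rtimes \Z \leqslant G$, which is of the form $N \rtimes \Z$ required by the hypothesis with $N = H_v$ finitely generated and one-ended. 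By hypothesis this subgroup is (homologically) coherent (over $\Q$), and the property passes to $G_x$ as a subgroup. Together with the weak Atiyah conjecture on $G$, \cref{cohomologicalcoherence} yields that $G$ is homologically coherent over $\Q$, proving the parenthesised version of the corollary.

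For the coherence assertion I would run the same argument with $\Z$ in place of $\Q$ in \cref{rel_hom_coherence}, choosing $\D = \mathcal{R}_{\CC G}$, which is semisimple Artinian by the weak Atiyah conjecture on $G$. The change-of-rings identification
\[
\Tor_1^{\Z G}(\mathcal{R}_{\CC G}, \omega_{\Z}(X)) \cong \Tor_1^{\CC G}(\mathcal{R}_{\CC G}, \omega_{\CC}(X))
\]
combined with $b_2^{(2)}(G, X) = 0$ and the Artinian-semisimple structure of $\mathcal{R}_{\CC G}$ shows that this Tor module vanishes, so the hypothesis of \cref{rel_hom_coherence} is met. Then, for a finitely generated $L \leqslant G$, \cref{fgstabilisers} produces a subpair $(L, Y) \subset (G, X)$ of type $\FP_2(\Z)$ with each $L_y$ finitely generated, hence finitely presented (and therefore $\FP_2(\Z)$) by coherence of $G_x \supseteq L_y$. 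The $R = \Z$ version of \cref{fp2_pair_to_fp2} then gives that $L$ is of type $\FP_2(\Z)$; being finitely generated, $L$ is finitely presented.

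The main obstacle is precisely the last step: upgrading the $\Q$-level argument to the $\Z$-level, ensuring that $\mathcal{R}_{\CC G}$ serves as an admissible Artinian overring of $\Z G$ for invoking \cref{rel_hom_coherence} and that the $L^2$-vanishing $b_2^{(2)}(G, X) = 0$ propagates to honest vanishing of the relevant integral Tor module. This is precisely what the Artinian-semisimple structure of $\mathcal{R}_{\CC G}$ buys us, and it is the key bridge between the homological coherence and coherence versions of the corollary.
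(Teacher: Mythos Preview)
Your argument for homological coherence over $\Q$ is essentially the paper's, though with some superfluous steps: the initial case-split on ends is unnecessary (Dunwoody's decomposition handles all cases uniformly), and so is the passage to a finite-index subgroup---since the underlying graph of Dunwoody's decomposition is finite, the permutation $\sigma$ of vertex conjugacy classes has finite order automatically, so every stabiliser $G_x$ is already of the form $H_v\rtimes\Z$ without modifying $\psi$.

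The genuine gap is in your coherence argument. Your final sentence, ``being finitely generated, $L$ is finitely presented'', is false: type $\FP_2(\Z)$ does \emph{not} imply finite presentation (Bestvina--Brady groups are standard counterexamples). So even if your $\Z$-level run of \cref{rel_hom_coherence} and \cref{fgstabilisers} went through, you would only conclude that finitely generated subgroups of $G$ are of type $\FP_2(\Z)$, which is strictly weaker than coherence. There is a secondary problem too: \cref{rel_hom_coherence} and \cref{fgstabilisers} require $\cd_R(G,X)\leqslant 2$, and \cref{suspensions_cd2} only gives this for $R=\Q$; over $\Z$ it can fail when the finite edge groups in Dunwoody's decomposition are nontrivial (cf.\ \cref{cd=1}).

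The paper's route to coherence is completely different from yours and avoids both issues. Once homological coherence over $\Q$ is established and each $G_x\cong H_v\rtimes\Z$ is known to be coherent, one observes that each $H_v$ is therefore coherent; then $H$, being a finite graph of coherent groups with finite edge groups, is coherent by Karrass--Solitar. Finally one invokes \cite[Theorem~1.3]{JL23}, which promotes homological coherence over $\Q$ of $G=H\rtimes\Z$ to honest coherence once $H$ is coherent. This promotion theorem is the missing ingredient in your approach.
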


\begin{proof}
By Dunwoody's accessibility theorem \cite{dunwoody_85}, there is a finite graph of groups decomposition $\mathcal{H}$ for $H$ in which each edge group is finite and each vertex group is one-ended (and so do not split non-trivially with finite edge groups). By \cref{suspensions_cd2} we have that $\cd_{\Q}(G, X)\leqslant 2$ and $b_2^{(2)}(G, X) = 0$, where $X$ is the $G$-set from \cref{suspensions_cd2}. Since each $G_x\leqslant G$ is isomorphic to a semidirect product of the form $H_{v}\rtimes\Z$ for each $x\in X$ (the bijection $\sigma$ from \cref{suspensions_cd2} has finite order on each vertex), we see that $G_x$ is (homologically) coherent (over $\Q$) for each $x\in X$. Thus, we may apply \cref{cohomologicalcoherence} to conclude that $G$ is homologically coherent over $\Q$. If each $G_x$ is also coherent, then each vertex group $H_v$ is coherent. Since a graph of coherent groups with finite edge groups is coherent (by results of Karrass--Solitar \cite{KS70,KS71}), $H$ is coherent and so $G$ is coherent by \cite[Theorem 1.3]{JL23}.
\end{proof}

\subsection{Cyclic extensions of hyperbolic groups}

We here prove \cref{hyperbolic_extension}. \cref{extension_coherence} reduces the proof to the case of one-ended hyperbolic groups. This case can be handled using several well-known facts about hyperbolic groups and was first proven by Kropholler--Vidussi--Walsh \cite[Theorem 4.1]{KVW21} (in a much more general form). Since \cite{KVW21} was withdrawn, we include a proof for completeness.

\begin{pro}
\label{one_ended_extension}
Let $H$ be a (homologically) coherent (over $\Q$) one-ended hyperbolic group. Then any semidirect product $H\rtimes\Z$ is also (homologically) coherent (over $\Q$).
\end{pro}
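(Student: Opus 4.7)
The plan is to split into two cases according to whether $H$ is cocompact Fuchsian. If so, $H$ contains a closed hyperbolic surface group as a finite-index subgroup, so $G$ is virtually the fundamental group of a surface bundle over the circle, which is a closed $3$-manifold group and therefore coherent by Scott's theorem. Since coherence is preserved under finite-index overgroups, $G$ itself is coherent.

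Assume from now on that $H$ is not cocompact Fuchsian. Then by Bowditch's theorem, $H$ is the fundamental group of a finite graph of groups with $2$-ended edge groups and vertex groups that are either \emph{rigid} (having finite outer automorphism group relative to the adjacent edge groups) or virtually free. Canonicity of this decomposition implies that $G = H \rtimes_{\psi} \mathbb{Z}$ also acts on the associated Bass--Serre tree $\mathcal{T}$. I first analyse the $G$-stabilisers. For a rigid vertex $v$, a sufficiently large power of $\psi$ preserves the $H$-conjugacy class of $H_v$ and acts on $H_v$ by an inner automorphism (after another finite power), so $G_v$ is virtually $H_v \times \mathbb{Z}$; since $H$ is coherent and $H_v$ is finitely generated, $H_v$ is coherent and hence so is $G_v$. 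For a virtually free vertex $v$, one can pass to a characteristic finite-index free subgroup of $H_v$ to identify a finite-index free-by-cyclic subgroup of $G_v$, which is coherent by Feighn--Handel, and hence $G_v$ is coherent. The edge stabilisers $G_e$ are virtually $\mathbb{Z}^2$, and so coherent as well.

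With the stabilisers understood, I plan to apply \cref{cohomologicalcoherence} to a pair $\mathcal{P} = (G, X)$ built from the $G$-action on $\mathcal{T}$. The weak Atiyah conjecture holds for $G$ by assumption, and the stabilisers are homologically coherent over $\mathbb{Q}$ by the previous paragraph. The vanishing $b_2^{(2)}(G, X) = 0$ should follow from \cref{vanishing} applied to the extension $1 \to H \to G \to \mathbb{Z} \to 1$, combined with a graph-of-groups computation for the $L^2$-invariant over $H$ using \cref{gogp_sequence}. Granted homological coherence of $G$ over $\mathbb{Q}$, any finitely generated subgroup $K \leq G$ is of type $\FP_2(\mathbb{Q})$ and its induced action on $\mathcal{T}$ has finitely presented vertex and edge stabilisers (as finitely generated subgroups of the coherent groups $G_v$ and $G_e$). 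The $\FP_2(\mathbb{Q})$ property forces the induced quotient graph of groups for $K$ to be finite, giving finite presentability of $K$ and thus coherence of $G$.

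The main obstacle is verifying the bound $\cd_\Q(G, X) \leq 2$. Applying the Chiswell sequence (\cref{Chiswell}) to the $G$-action on $\mathcal{T}$ with $X = V(\mathcal{T})$ yields $\omega_\Q(V(\mathcal{T})) \isom \bigoplus_e \mathbb{Q}[G/G_e]$, and since each $G_e$ is virtually $\mathbb{Z}^2$ this only gives $\cd_\Q(G, V(\mathcal{T})) \leq 3$. To sharpen this to $2$, one must refine the choice of pair: either further split the virtually free vertex groups over finite subgroups so as to replace the problematic $2$-ended edge groups by finite ones (mirroring the setup of \cref{suspensions_cd2}), or construct a genuine graph of group pairs via \cref{gogp_sequence} in which each $2$-ended edge stabiliser is paired with a suitable non-trivial $G_e$-set whose augmentation module cancels the superfluous projective dimension. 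Carrying out one of these strategies is the technical heart of the argument.
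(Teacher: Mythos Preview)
Your analysis of the vertex and edge stabilisers of the JSJ tree under the $G$-action is correct and matches the paper: rigid vertex stabilisers are virtually $H_v\times\Z$, hanging-Fuchsian vertex stabilisers are virtually free-by-cyclic (hence coherent by Feighn--Handel), and edge stabilisers are virtually $\Z^2$. The cocompact Fuchsian case is also handled exactly as in the paper.

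The gap is in how you propose to conclude. You route the argument through \cref{cohomologicalcoherence}, which forces you to assume the weak Atiyah conjecture for $G$ (not a hypothesis of this proposition) and to verify $\cd_\Q(G,X)\leqslant 2$, a bound you yourself identify as the main obstacle and do not resolve. Neither workaround you sketch is straightforward: further splitting the hanging-Fuchsian vertices does not eliminate the $2$-ended edges attached to rigid vertices, and building a graph of group pairs with nontrivial edge sets so that the edge augmentation modules become projective would require, for each virtually-$\Z^2$ edge stabiliser, an infinite cyclic quotient compatible with the boundary monomorphisms, which is not arranged.

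The paper avoids all of this. Once $G$ is exhibited as the fundamental group of a finite graph of groups whose vertex groups are (homologically) coherent and whose edge groups are virtually abelian, coherence of $G$ follows directly from Karrass--Solitar \cite{KS70,KS71}: since the edge groups are slender (every subgroup is finitely generated), any finitely generated subgroup $K\leqslant G$ acts on the Bass--Serre tree with cofinite quotient and finitely generated vertex stabilisers, which are then finitely presented by coherence of the vertex groups. Your last paragraph essentially rediscovers this argument, but makes it conditional on first proving homological coherence via \cref{cohomologicalcoherence}; in fact it works unconditionally, so the detour through group pairs, $L^2$-Betti numbers and the weak Atiyah conjecture is unnecessary here.
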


\begin{proof}
Let $\psi\in \Aut(H)$ and let $G = H\rtimes_{\psi}\Z$ be the semidirect product. If $H$ is virtually $\Z$, then $G$ is virtually $\Z^2$ and so is coherent. If $H$ is a cocompact Fuchsian group, then $H$ has a finite index closed surface subgroup and so $G$ is virtually surface-by-$\Z$. By the Dehn--Nielsen--Baer Theorem, this implies that $G$ is virtually the fundamental group of the mapping torus of a surface homeomorphism. In particular, $G$ is virtually the fundamental group of a 3-manifold and so is coherent by Scott's theorem \cite{Sc73}.

Now assume that $H$ is not virtually $\Z$ or cocompact Fuchsian. From Bowditch's canonical JSJ-decomposition of $H$ \cite{Bo98} one can obtain a graph of groups decomposition $\mathcal{G}$ for $G$ as follows: since the automorphism $\psi$ induces an $H$-equivariant isomorphism of the JSJ tree $\mathcal{T}$ for $H$, the group $H\rtimes_{\psi}\Z$ thus also acts on $\mathcal{T}$ and so $\mathcal{G}$ is the quotient graph of groups for this action (we may need to subdivide an edge if the action inverts an edge). Now vertex and edge stabilisers of $\mathcal{T}$ as a $G$-tree are extensions by $\Z$ of vertex and edge stabilisers of $\mathcal{T}$ as an $H$-tree. In particular, $G$ admits a graph of groups decomposition with edge groups virtually $\Z^2$ (extensions of virtually $\Z$ groups by $\Z$) and with vertex groups extensions $H_v\rtimes\Z$ for $H_v$ a vertex group of the JSJ decomposition for $H$.

The JSJ-decomposition has three types of vertex groups: two ended groups (and so virtually $\Z$), maximal hanging Fuchsian groups (and so virtually free) or rigid groups relative to incident edge groups (and thus have finite outer automorphism group relative to the incident edge groups). If $H_v$ is a virtually free group, then $H_{v}\rtimes\Z$ has a finite index free-by-$\Z$ subgroup and so is coherent by Feighn--Handel \cite{FH99}. If $H_{v}$ is rigid relative to its incident edge groups, then any semidirect product $H_{v}\rtimes\Z\leqslant G$ has a finite index subgroup isomorphic to $H_{v}\times \Z$. In this case, if $H_v$ is (homologically) coherent (over $\Q$), then so is $H_v\times\Z$ since finitely generated subgroups are isomorphic to products of finitely generated subgroups of $H_v$ and subgroups of $\Z$. Thus, $H_v\rtimes\Z$ is (homologically) coherent (over $\Q$) if $H_v$ is.

We have shown that $G$ splits as a graph of groups with virtually abelian edge groups and (homologically) coherent (over $\Q$) vertex groups. Hence, $G$ is (homologically) coherent (over $\Q$) by results of Karrass--Solitar \cite{KS70,KS71}.
\end{proof}

\begin{rem}
    Since one-ended hyperbolic groups are co-Hopfian by a result of Moioli \cite[Theorem 1.0.7]{Mo13}, \cref{one_ended_extension} also holds for ascending HNN-extensions of $H$.
\end{rem}

\begin{proof}[Proof of \cref{hyperbolic_extension}]
\cref{extension_coherence} reduces the (homological) coherence (over $\Q$) of $G$ to coherence of semidirect products $N\rtimes\Z\leqslant G$ with $N\leqslant H$ finitely generated and one-ended. In fact, we only have to consider the one-ended subgroups $N$ that are vertex groups in Dunwoody's decomposition \cite{Du79}. Since all such groups are quasi-convex in $H$, they are also hyperbolic. Since $H$ is (homologically) coherent (over $\Q$) by assumption, \cref{one_ended_extension} implies that each $N\rtimes\Z$ is (homologically) coherent (over $\Q$). Hence, $G$ is (homologically) coherent (over $\Q$).
\end{proof}

\section{Coherence of group algebras} \label{sec: coh_group_alg}

\subsection{Modules for group pairs of cohomological dimension 2}

Let $\mathcal P=(G, X)$ be a group pair with $\cd_{K}(G, X)\leqslant 2$. In this section we derive structural properties of the modules over $KG$. 

Let $T\subset X_0$ be a complete set of $G$-orbit representatives  and   $S\subset G$ be such that
\[
	G = \left\langle S, \bigcup_{t\in T}G_t\right\rangle.
\]
The condition that $\cd_{K}(G, X)\leqslant 2$ has the following consequence.

\begin{lem}\label{extrestr}
	Let $M_1$ and $M_2$ be two  $KG$-modules. Then the natural maps 
	\[
		\Ext^k_{KG}(M_1, M_2)\to \prod_{t\in T} \Ext^k_{KG_t}(M_1, M_2)
	\]
	are isomorphisms for $k>2$ and surjective for $k=2$.  
\end{lem}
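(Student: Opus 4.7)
The plan is to reduce the lemma to Shapiro's lemma by means of a suitable diagonal-tensor exact sequence, using the hypothesis $\cd_K(G,X)\le 2$ to kill a connecting term.

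First I would start from the defining short exact sequence
\[
0 \to \omega_K(X) \to K[X] \to K \to 0
\]
and tensor over $K$ with $M_1$, endowing tensor products with the diagonal $G$-action. Since $K$ is a field this preserves exactness, yielding
\[
0 \to M_1\otimes_K \omega_K(X) \to M_1\otimes_K K[X] \to M_1 \to 0.
\]
The decomposition $K[X] = KG\cdot x_0 \oplus \bigoplus_{t\in T} K[G/G_t]$ induces a corresponding decomposition of the middle term. Two standard identifications are crucial: $M_1\otimes_K KG$ with diagonal action is isomorphic to $M_1\otimes_K KG$ with action only on the first factor (via $g\otimes m \mapsto g\otimes g^{-1}m$), hence is a free $KG$-module; and $M_1\otimes_K K[G/G_t]$ with diagonal action is isomorphic to $KG\otimes_{KG_t}M_1 = {}^G M_1$ via the map $g\otimes m \mapsto gG_t\otimes gm$. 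Under the second identification, the augmentation $M_1\otimes_K K[G/G_t] \to M_1$ corresponds to the counit $g\otimes m \mapsto gm$, which is precisely the map whose adjoint realises the Shapiro isomorphism as the restriction map $\Ext^k_{KG}(M_1,M_2)\to \Ext^k_{KG_t}(M_1,M_2)$.

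Next, the assumption $\cd_K(G,X)\le 2$ gives $\prd_{KG}(\omega_K(X))\le 1$. Tensoring a length-one projective resolution of $\omega_K(X)$ with $M_1$ over $K$ produces a length-one projective resolution of $M_1\otimes_K\omega_K(X)$ (since $P\otimes_K M_1$ is $KG$-projective for $P$ projective, by the same free/diagonal identification as above). Hence $\Ext^k_{KG}(M_1\otimes_K\omega_K(X),M_2) = 0$ for all $k\ge 2$. Feeding the SES above into $\Ext^*_{KG}(-,M_2)$ and using this vanishing, the long exact sequence collapses to an isomorphism
\[
\Ext^k_{KG}(M_1,M_2) \xrightarrow{\cong} \Ext^k_{KG}(M_1\otimes_K K[X],M_2) \quad (k\ge 3)
\]
and a surjection at $k=2$. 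Splitting off the free summand (which contributes nothing for $k\ge 1$) and applying Shapiro's lemma orbit by orbit converts the right-hand side into $\prod_{t\in T}\Ext^k_{KG_t}(M_1,M_2)$.

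The only subtle point I anticipate is Step 7, namely checking that the composite map built above coincides with the restriction map. This is a diagram-chase that reduces to verifying that, under the isomorphism $M_1\otimes_K K[G/G_t]\cong KG\otimes_{KG_t}M_1$, the augmentation becomes the adjunction counit, as noted above. The rest of the argument is essentially formal once the vanishing $\Ext^k_{KG}(M_1\otimes_K\omega_K(X),M_2)=0$ for $k\ge 2$ is in hand.
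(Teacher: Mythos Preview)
Your proof is correct and follows essentially the same strategy as the paper's: both arguments feed the defining short exact sequence $0\to\omega_K(X)\to K[X]\to K\to 0$ into a suitable long exact $\Ext$-sequence, use $\prd_{KG}(\omega_K(X))\le 1$ to obtain the isomorphism for $k>2$ and the surjection for $k=2$, and finish with Shapiro's lemma orbit by orbit.

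The only difference is a technical variation in how $M_1$ is handled. The paper first applies the adjunction isomorphism $\Ext^k_{KG}(M_1,M_2)\cong \Ext^k_{KG}(K,\Hom_K(M_1,M_2))$ to reduce to the case of first argument $K$, and then works directly with $\Ext^k_{KG}(-,\Hom_K(M_1,M_2))$ applied to the defining sequence; here no extra check on projective dimension is needed because $\omega_K(X)$ itself has $\prd\le 1$. You instead tensor the defining sequence with $M_1$ over $K$ under the diagonal action and apply $\Ext^k_{KG}(-,M_2)$; this requires the (correct and standard) observation that $P\otimes_K M_1$ is $KG$-projective for $P$ projective, so that $\prd_{KG}(M_1\otimes_K\omega_K(X))\le 1$. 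Both routes are equally valid and equally short; yours has the advantage of keeping the restriction map visibly as the counit ${}^GM_1\to M_1$, while the paper's route avoids the diagonal-action bookkeeping.
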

\begin{proof}
	According to \cite[Proposition III.2.2]{Br82} we have that the adjunction map
	\[
		\Ext^k_{KG}(M_1, M_2) \to \Ext^k_{KG}(K, \Hom_{KG}(M_1, M_2))
	\]
	is an isomorphism of $KG$-modules. Since $\cd_K(G, X) \leqslant 2$, $\omega_{K}(X)$ has projective dimension at most $1$. Thus, applying the $\Ext$ functor to the sequence $0 \to \omega_{K}(X) \to K[X] \to K \to 0$ we get that
	\[
		\Ext^k_{KG}(K, \Hom_{KG}(M_1, M_2)) \to \Ext^k_{KG}(K[X], \Hom_{KG}(M_1, M_2))
	\]
	is an isomorphism for $k > 2$ and surjective for $k = 2$. By Shapiro Lemma we conclude that
	\[
		\Ext^k_{KG}(K[X], \Hom_{KG}(M_1, M_2)) \cong \prod_{t\in T} \Ext^k_{KG_t}(K, \Hom_{KG}(M_1, M_2)).
	\]
	Hence the claim follows taking the inverse of the adjunction map for each $G_t$.
\end{proof}

The proof of the next result follows \cite[Corollary 2.3] {HH23}.

\begin{lem}\label{freeintersection}
	Let $g\in G$. If $t_1$ and $t_2$ are distinct elements of $T$, or if $g\not \in G_{t_2}$ , then   $\cd_K(G_{t_1}\cap G_{t_2}^g)\le 1$.
\end{lem}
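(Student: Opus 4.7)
The plan is to exhibit two distinct $H$-fixed points in $X$, where $H = G_{t_1}\cap G_{t_2}^g$, and then exploit the surjectivity conclusion of \cref{extrestr} applied to the restricted pair $(H,X)$.

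First I would set $y_1 := t_1$ and $y_2 := g^{-1} t_2$. Recalling the convention $G_{t_2}^g = g^{-1}G_{t_2}g$, every element of $H$ fixes both $y_1$ and $y_2$: membership in $G_{t_1}$ directly gives fixation of $y_1$, and if $h = g^{-1}h'g$ with $h' \in G_{t_2}$, then $h \cdot g^{-1}t_2 = g^{-1}h't_2 = g^{-1}t_2$. These two points are distinct under either hypothesis: if $t_1 \neq t_2$, they belong to different $G$-orbits since $T \subset X_0$ is a set of orbit representatives; if $t_1 = t_2$ but $g \notin G_{t_2}$, then $g^{-1} \notin G_{t_2}$, so $g^{-1} t_2 \neq t_2 = t_1$.

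Next I would check that the hypothesis of \cref{extrestr} transfers from $(G,X)$ to the restricted pair $(H,X)$. Since $KG$ is a free $KH$-module, every projective $KG$-module is a projective $KH$-module, hence
\[
\cd_K(H, X) = \prd_{KH}(\omega_K(X)) + 1 \leq \prd_{KG}(\omega_K(X)) + 1 = \cd_K(G, X) \leq 2.
\]
The $H$-orbit of $x_0$ remains regular, so $(H,X)$ is a valid group pair. Pick a complete set of $H$-orbit representatives $T_H \subset X_0^H = X \setminus H\cdot x_0$ containing both $y_1$ and $y_2$ (possible because $\{y_1\}$ and $\{y_2\}$ are singleton $H$-orbits lying in $X_0 \subset X_0^H$); note that $H_{y_1} = H_{y_2} = H$.

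Now I would apply \cref{extrestr} to $(H,X)$ with $M_1 = K$ and $M_2 = M$ an arbitrary $KH$-module, obtaining that the restriction map
\[
\phi\colon H^2(H, M) \longrightarrow \prod_{z \in T_H} H^2(H_z, M)
\]
is surjective. Both the $y_1$- and $y_2$-coordinate projections of $\phi$ are the identity on $H^2(H,M)$. Given any $\alpha \in H^2(H,M)$, surjectivity produces $\beta \in H^2(H,M)$ with $\phi(\beta) = (\alpha, 0, 0, \ldots)$, where $\alpha$ sits in the $y_1$-slot and $0$ in every other slot. The two identity projections force $\beta = \alpha$ and $\beta = 0$, hence $\alpha = 0$. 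Therefore $H^2(H, M) = 0$ for every $KH$-module $M$, and so $\cd_K(H) \leq 1$.

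I do not foresee a serious obstacle; the only subtleties are the combinatorial verification that $y_1 \neq y_2$ in both cases and the passage $\cd_K(G,X) \leq 2 \Rightarrow \cd_K(H,X) \leq 2$, both of which are routine.
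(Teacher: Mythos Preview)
Your proof is correct and reaches the same punchline as the paper: the restriction map into two identical factors is surjective, so the diagonal $\Ext^2_{KH}(K,M)\to \Ext^2_{KH}(K,M)\times\Ext^2_{KH}(K,M)$ is onto, forcing $\Ext^2_{KH}(K,M)=0$.

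The route you take to that punchline is different from the paper's, and somewhat cleaner. The paper stays at the level of $G$: it uses Shapiro to rewrite $\Ext^2_{KH}(K,L)$ as $\Ext^2_{KG}(K,\Hom_{KH}(KG,L))$, applies \cref{extrestr} for $(G,X)$, and then invokes a Mackey-type double-coset decomposition of each $\Ext^2_{KG_t}(K,\Hom_{KH}(KG,L))$ to extract the two copies of $\Ext^2_{KH}(K,L)$. You instead observe that $\cd_K(H,X)\le\cd_K(G,X)\le 2$ (by freeness of $KG$ over $KH$) and apply \cref{extrestr} directly to the pair $(H,X)$; the two $H$-fixed points $t_1$ and $g^{-1}t_2$ then immediately produce the two identity factors. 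This avoids the coinduction and the double-coset bookkeeping entirely, at the cost of noting that the proof of \cref{extrestr} only uses $\prd\,\omega_K(X)\le 1$ and a Shapiro decomposition of $K[X]$, hence works verbatim for any subgroup.
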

\begin{proof}
	Put $H=G_{t_1}\cap G_{t_2}^g$ and let $L$ be a $KH$-module. We want to show that $\Ext^2_{KH}(K, L)=0$. By Shapiro Lemma,
	\[
		\Ext^2_{KH}(K, L)\cong \Ext^2_{KG}(K, \Hom_{KH}(KG,L)).
	\]
	Therefore, by \cref{extrestr}, we have an epimorphism
	\[
		\Ext^2_{KH}(K, L)\to \prod_{t\in T} \Ext^2_{KG_t}(K, \Hom_{KH}(KG,L))\cong 
		\prod_{t\in T} \prod_h \Ext^2_{K[H\cap (G_t)^u]}(K,  L),
	\]
	where $u$ ranges across double-coset representatives for $G_t \backslash G / H$. Note that $H=H\cap G_{t_1}=H\cap G_{t_2}^g$, and hence, the diagonal map
	\[
		\Ext^2_{KH}(K, L)\to \Ext^2_{KH}(K, L)\oplus \Ext^2_{KH}(K, L)
	\]
	must be surjective. Therefore, $\Ext^2_{KH}(K, L)=0$.
\end{proof}

The outcome of this section is that submodules of $KG$ admit an induced module structure from the stabilizers group algebras $KG_t$ up to a projective kernel.

\begin{pro} \label{pro: standard_modules}
	Let $I$ be a $KG$-submodule of a free module $KG^{\alpha}$. Then for each $t \in T$ there exists $I_t$ a $KG_t$-submodule of a free module such that $I$ admits a presentation of the form
	\[
		0 \to Q \to \oplus_{t \in T}\  ^G I_t \to I \to 0
	\]
	with $Q$ a projective $KG$-module. Moreover, for every $k \ge 1$  and every (respectively right) $KG$-module $L$ the natural maps
	\[
		\prod_{t \in T} \Ext^k_{KG_t}(I, L) \to \Ext^k_{KG}( \oplus_{t \in T}\  ^G I_t , L)
	\]
	and
	\[
		\Tor_k^{KG}(L,  \oplus_{t \in T}\  ^G I_t ) \to \bigoplus_{t \in T} \Tor_k^{KG_t}(L, I)
	\]
	are isomorphisms.
\end{pro}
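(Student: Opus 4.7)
The plan is to set $I_t := \rst^G_{G_t}(I)$, which is a $KG_t$-submodule of the free $KG_t$-module $\rst^G_{G_t}(KG^{\alpha})$ (free since $KG$ is a free left $KG_t$-module on any right transversal of $G_t$ in $G$). Define $\mu\colon \bigoplus_{t\in T} {}^G I_t \to I$ to be the sum of the induction--restriction counits $g\otimes_{KG_t} m\mapsto gm$. Assuming $T\neq\emptyset$, picking any $t\in T$ and sending $m\in I_t=I$ to $1\otimes_{KG_t} m$ gives a set-theoretic section of $\mu|_{{}^G I_t}$, so $\mu$ is surjective; put $Q:=\ker(\mu)$. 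The ``moreover'' isomorphism statements on $\Ext^k$ and $\Tor_k$ are then immediate from Eckmann--Shapiro, using that $\Ext^k$ takes direct sums in the first variable into products while $\Tor_k$ commutes with direct sums, so the real content of the proposition reduces to showing that $Q$ is projective.

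To do this I will prove $\Ext^k_{KG}(Q,L)=0$ for all $k\geq 1$ and every $KG$-module $L$. Set $M:=KG^{\alpha}/I$. The sequence $0\to I\to KG^{\alpha}\to M\to 0$ remains exact after restriction to any $KG_t$, and $KG^{\alpha}$ is free over both $KG$ and each $KG_t$; hence dimension shifting yields, for each $k\geq 1$, natural restriction-compatible isomorphisms
\[
\Ext^k_{KG}(I,L)\cong \Ext^{k+1}_{KG}(M,L),\qquad \Ext^k_{KG_t}(I,L)\cong \Ext^{k+1}_{KG_t}(M,L).
\]
Applying \cref{extrestr} to $M$, the restriction map
\[
\alpha_k\colon \Ext^k_{KG}(I,L)\longrightarrow \prod_{t\in T}\Ext^k_{KG_t}(I,L)
\]
is therefore an isomorphism for $k\geq 2$ and surjective for $k=1$.

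Under the Eckmann--Shapiro identification $\Ext^k_{KG}(\bigoplus_t {}^G I_t,L)\cong \prod_t \Ext^k_{KG_t}(I,L)$, the pull-back $\mu^*$ coincides with $\alpha_k$: chasing the adjunction $\Hom_{KG}({}^G N,-)\cong \Hom_{KG_t}(N,\rst(-))$ and observing that the composition $I_t\hookrightarrow {}^G I_t\xrightarrow{\mu_t} I$ is the canonical $KG_t$-inclusion $\rst(I)=I_t\hookrightarrow \rst(I)$ yields this identification. Feeding the conclusion for $\alpha_k$ into the long exact sequence associated to $0\to Q\to \bigoplus_t {}^G I_t\to I\to 0$ then forces $\Ext^k_{KG}(Q,L)=0$ for every $k\geq 1$ and every $L$, so $Q$ is projective. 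The only non-formal step is the identification $\mu^*=\alpha_k$, which is the main technical point and requires unwinding the adjunction carefully; once this is in hand, the projectivity of $Q$ is a direct long-exact-sequence argument.
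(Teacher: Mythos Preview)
Your proof is correct and takes a genuinely different, more elementary route than the paper's.

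The paper defines each $I_t$ via Bergman's structure theorem for submodules of free modules over free-product group rings: it lifts $I$ to $\widetilde I\subset K\widetilde G^{\alpha}$ with $\widetilde G=F(S)*(*_{t\in T}G_t)$, applies \cite{Be74} to obtain $\widetilde I\cong\bigoplus_{t}{}^{\widetilde G}I_t$, and tensors down to $KG$. With that choice of $I_t$ the ``moreover'' isomorphisms are \emph{not} automatic: the paper establishes them through two claims, the key one using \cref{freeintersection} (that $\cd_K(G_{t_1}\cap G_{t_2}^g)\le 1$) to kill the cross terms $\Ext^k_{KG_{t_2}}({}^GI_{t_1},L)$ for $t_1\neq t_2$ or $g\notin G_{t_2}$. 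Projectivity of $Q$ is then deduced by comparing $\Ext^k_{KG}(I,L)$ with $\Ext^k_{KG}(I',L)$ via these claims together with \cref{extrestr}.

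Your choice $I_t=\rst^G_{G_t}(I)$ short-circuits all of this. The ``moreover'' statements become the Eckmann--Shapiro isomorphisms tautologically, and your identification of $\mu^*$ with the restriction map $\alpha_k$ is the standard fact that, under the induction--restriction adjunction, pullback along the counit ${}^G\rst I\to I$ corresponds to restriction of $\Ext$-classes (as you verify in degree~$0$, with dimension shifting doing the rest). You then only need the single input \cref{extrestr}, dimension-shifted along $0\to I\to KG^{\alpha}\to M\to 0$, to see that $\alpha_k$ is surjective for $k\ge1$ and an isomorphism for $k\ge2$; the long exact sequence then forces $\Ext^k_{KG}(Q,L)=0$ for all $k\ge1$.

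What your approach buys: it avoids Bergman's theorem and \cref{freeintersection} entirely, and isolates cleanly that the whole proposition is a formal consequence of \cref{extrestr}. What the paper's approach buys: its $I_t$ arise from an honest decomposition of $\widetilde I$ and are typically much smaller than $\rst I$; however, for the downstream application in \cref{coherencegroupalgebraspais} one only uses that finitely generated submodules of $I'=\bigoplus_t{}^GI_t$ sit inside a finite sub-sum $\bigoplus_{i=1}^n{}^GI_i$ with each $I_i$ a finitely generated $KG_{t_i}$-submodule of a free module, and this holds equally well for your choice.
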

\begin{proof}
	We put $\widetilde G=F(S)*(*_{t\in T} G_t)$ and $M=KG^{\alpha}/I$. Let $\widetilde I$ be the preimage of $I$ in $K\widetilde{G}^{\alpha}$. By \cite[Theorem 2.2]{Be74}, we have that there are $KG_t$-modules $I_t$   such that
	\[
		\widetilde I\cong \oplus_{t\in T}{}^{\widetilde G}I_t.
	\] 
	Moreover, by \cite[Proposition 2.1]{Be74}, $I_t$ is a  $KG_t$-submodules of $K\widetilde{G}^{\alpha}$ for each $t \in T$. We have the following exact sequence:
	\[
		0\to \widetilde I \to  K\widetilde{G}^{\alpha} \to M\to 0.
	\]
	Note that applying $KG\otimes_{K\widetilde  G}$ we obtain
	\[
		0\to \ker \tau \to KG\otimes_{K\widetilde G} \widetilde I \xrightarrow{\tau} I\to 0,
	\]
	and $KG\otimes_{K\widetilde G}\widetilde I\cong \oplus_{t\in T}{}^GI_t$. We shall show that $\ker \tau$ is a projective $KG$-module. More specifically, set $I' = \oplus_{t\in T}{}^GI_t$ and fix a $KG$-module $L$, we will prove that the natural map
	\[
		\Ext^k_{KG}(I, L) \to \Ext^k_{KG}(I', L)
	\]
	induced by $\tau$ is an isomorphism for $k \geq 2$ and surjective for $k = 1$.

	\begin{claim} \label{claim: ext_free_intersection}
		Let $t_1, t_2\in T$. Then for every $k\geq 1$,
		\[
			\Ext ^k_{KG_{t_2}} ({}^GI_{t_1},  L)\cong \left \{ \begin{array} {cc} 0 & t_1\ne t_2\\  \Ext^k_{KG_{t_2} }(I_{t_1},    L) & t_1=t_2\end{array} \right .
		\]
	\end{claim}
	\begin{proof}
		We have that, as a $KG_{t_2}$-module,
		\[
			{}^G I_{t_1} \cong \bigoplus_h KG_{t_2} \otimes_{K[G_{t_1} \cap G_{t_2}^h]} I_{t_1},
		\]
		where $h$ ranges over double coset representatives for $G_{t_2} \backslash G / G_{t_1}$. Observe that $I_{t_1}$ is a submodule of a free $K[G_{t_1}]$-module. Hence, by \cref{freeintersection}, for every $k \geq 1$ we have
		\[
			\Ext^k_{KG_{t_2}}\left(KG_{t_2} \otimes_{K[G_{t_1} \cap G_{t_2}^h]} I_{t_1}, L\right) = 0
		\]
		if $t_1 \ne t_2$ or $h \notin G_{t_2}$.
	\end{proof}
	
	The claim implies that if $t\in T$, then for $k \geq 1$ the canonical map
	\begin{equation} \label{eq: prime_to_t}
		\Ext^k_{KG_t}(I', L) \to \Ext^k_{KG_t}(I_t, L) ,
	\end{equation}
	is an isomorphism, and so
	\begin{equation} \label{passtoprime}
		\prod_{t\in T}\Ext^k_{KG_t}(I', L) \to \prod_{t\in T}\Ext^k_{KG_t}(I_t, L) \cong  \Ext^k_{KG}(I', L)
	\end{equation}
	is an isomorphism for $k \geq 1$.
	\begin{claim} Let $t\in T$. 
		The natural maps 
		\[
			\Ext^k_{KG_t}(I, L)\to   \Ext^k_{KG_t}(I', L)
		\]
		are isomorphisms for $k\geq 1$. 
	\end{claim}
	\begin{proof} 
		Consider the map $K\widetilde{G} \to KG$ as a morphism of $KG_t$-modules. Since $G_t$ is a subgroup of $G$, we can lift a right transversal for $G_t$ in $G$ to a right transversal for $G_t$ in $\widetilde{G}$, and hence, this map splits with a free kernel. Thus, the canonical map $\widetilde{I} \to I$ (viewed as a morphism of $KG_t$-modules) also splits with a free kernel. In particular, for $k \geq 1$ we obtain a canonical isomorphism
		\[
			\Ext^k_{KG_t}(I, L) \to \Ext^k_{KG_t}(\widetilde{I}, L).
		\]
		On the other hand, arguing with $\widetilde{G}$ instead of $G$ in \cref{claim: ext_free_intersection}, we get that for $k \geq 1$
		\[
			\Ext^k_{KG_t}(\widetilde{I}, L) \to \Ext^k_{KG_t}(I_t, L)
		\]
		is an isomorphism. Observe that as $KG_t$-modules we have the following commutative diagrams
		\[
		\begin{tikzcd}
			I' \arrow[rr, "{\tau}"]  &  & I \\
			& \widetilde{I} \arrow[lu, "{\iota}"] \arrow[ru] & 
		\end{tikzcd}
		\quad \mbox{and} \quad
		\begin{tikzcd}
			\widetilde{I} \arrow[rr, "{\iota}"]  &  & I' \\
			& I_t \arrow[lu] \arrow[ru] & 
		\end{tikzcd}
		\]
		where $\iota(x) := 1 \otimes x$ for $x \in \widetilde{I}$. Thus, composing the obtained isomorphisms with the inverse of \eqref{eq: prime_to_t} we get the isomorphism from the statement.
	\end{proof}
	
	The last claim, together with the isomorphism \eqref{passtoprime}, implies that for $k \geqslant 1$
	\[
		\prod_{t \in T} \Ext^k_{KG_t}(I, L) \to \Ext^k_{KG}(  I^\prime, L)
	\]
	is an isomorphism. A symmetric argument shows that for every (right) $KG$-module $L$ the natural maps
	\[
		\Tor_k^{KG}(L,  I') \to \bigoplus_{t \in T} \Tor_k^{KG_t}(L, I)
	\]
	are isomorphisms for $k \geq 1$. Finally, from \cref{extrestr} we conclude that the maps
	\[
	\Ext^k_{KG}(I, L) \to \Ext^k_{KG}(I', L)
	\]
	induced by $\tau$ are isomorphisms for $k \geq 2$ and surjective for $k = 1$.
\end{proof}

\subsection{Group algebras and group pairs}
In this section we prove the following theorem.
  \begin{teo}\label{coherencegroupalgebraspais}
  Let $\mathcal P=(G, X)$ be a group pair with $\cd_{K}(G, X)\leqslant 2$ and let   $KG\hookrightarrow \D$ be an embedding into an Artinian ring.  Suppose that $\D \otimes_K \omega_K(X)^{op}$ is flat as a $K[G]$ module. Assume that $K  G_x$ is coherent for all $x\in X$. Then $KG$ is coherent.  
  \end{teo}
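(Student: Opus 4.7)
The plan is to show that every finitely generated left ideal $I$ of $KG$ (and more generally, every finitely generated $KG$-submodule of a finitely generated free $KG$-module $KG^n$) is finitely presented. By \cref{pro: standard_modules} applied to such an $I \le KG^n$, there exists a short exact sequence
\[
0 \to Q \to \bigoplus_{t \in T} {}^G I_t \to I \to 0
\]
with $Q$ a projective $KG$-module and each $I_t$ a $KG_t$-submodule of a free $KG_t$-module. The natural $\Tor$ isomorphism supplied by the same proposition yields, for $k \ge 1$,
\[
\Tor_k^{KG}\!\left(L,\ \bigoplus_{t \in T} {}^G I_t\right) \cong \bigoplus_{t \in T} \Tor_k^{KG_t}(L, I_t)
\]
for every right $KG$-module $L$; combined with projectivity of $Q$, this translates into control of $\Tor_k^{KG}(\mathcal{D}, I)$ in terms of the individual $\Tor_k^{KG_t}(\mathcal{D}, I_t)$.

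Since $I$ is finitely generated, I can lift finitely many of its generators to elements of $\bigoplus_{t \in T} {}^G I_t$; these lifts are supported on a finite subset $T' \subset T$ and generate finitely generated $KG_t$-submodules $I'_t \le I_t$ for $t \in T'$. The restricted map $\bigoplus_{t \in T'} {}^G I_t' \to I$ is still surjective, and by the assumed coherence of each $KG_t$, each $I_t'$ is finitely presented, whence $\bigoplus_{t \in T'} {}^G I_t'$ is finitely presented as a $KG$-module.

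The remaining task is to show that the kernel $Q' := \ker\bigl(\bigoplus_{t \in T'} {}^G I_t' \to I\bigr) = Q \cap \bigoplus_{t \in T'} {}^G I_t'$ is finitely generated, which would yield finite presentability of $I$. Since $Q' \subseteq Q$ and $Q$ is projective, I invoke \cref{fg_submodule} with $S = KG$, $P = Q$, and $M = Q'$, reducing the problem to bounding the length of the image of $\mathcal{D} \otimes_{KG} Q' \to \mathcal{D} \otimes_{KG} Q$ as a $\mathcal{D}$-module. The flatness hypothesis on $\mathcal{D} \otimes_K \omega_K(X)^{op}$, via the identification from the proof of \cref{onerelatorflat}, yields $\Tor_1^{KG}(\mathcal{D}, \omega_K(X)) = 0$; combined with $\cd_K(G, X) \le 2$ (which ensures $\prd_{KG} \omega_K(X) \le 1$), this controls the relevant higher Tors. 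A diagram chase through the two short exact sequences, together with the fact that $\mathcal{D}$ is Artinian (so $\mathcal{D}^n$ has finite length, hence so does the image of $\mathcal{D} \otimes_{KG} I$ in $\mathcal{D}^n$), then produces the required length bound.

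The main obstacle is precisely this final length estimate: translating the flatness and cohomological hypotheses into a finite length bound on the image of $\mathcal{D} \otimes_{KG} Q' \to \mathcal{D} \otimes_{KG} Q$. This step is the module-theoretic analogue of the $\FP_2$-promotion argument from \cref{rel_hom_coherence}; once it is established, \cref{fg_submodule} provides the finite generation of $Q'$, and assembling the pieces shows that $I$ is finitely presented, hence that $KG$ is coherent.
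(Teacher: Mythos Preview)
Your overall strategy matches the paper's, but there is a genuine gap at the end. \cref{fg_submodule} does \emph{not} conclude that $Q'$ is finitely generated: it only says that $Q'$ is contained in some finitely generated submodule $N'\le Q$. Over a non-Noetherian ring such as $KG$, a submodule of a finitely generated module need not be finitely generated, so you cannot stop here. The paper closes this gap by an enlargement step: having obtained $N'$, one chooses finitely many $t_1,\dots,t_n\in T$ and finitely generated $KG_{t_i}$-submodules $I_i\le I_{t_i}$ so that $J':={}^GI_1\oplus\cdots\oplus{}^GI_n$ contains both $N'$ and your module $\bigoplus_{t\in T'}{}^GI_t'$. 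One then shows directly that $J'\cap Q$ is generated by $N'$ together with finitely many ``correction'' elements $s-j_s$ (one for each generator $s$ of $J'$, with $j_s$ a lift in the smaller module having the same image in $I$), hence is finitely generated. Only then does coherence of the $KG_{t_i}$ give $J'$ finitely presented and $I\cong J'/(J'\cap Q)$ finitely presented.

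A second, smaller issue: your sketch of the length bound is too vague and does not isolate the key fact. What is actually needed is that the map $\mathcal D\otimes_{KG}Q\to \mathcal D\otimes_{KG}I'$ is \emph{injective}. This is the non-trivial point, and it follows by combining the isomorphism $\Tor_1^{KG}(\mathcal D,I')\cong\bigoplus_{t}\Tor_1^{KG_t}(\mathcal D,I)$ from \cref{pro: standard_modules} (note: $I$, not $I_t$, on the right) with the isomorphism $\bigoplus_{t}\Tor_1^{KG_t}(\mathcal D,I)\cong\Tor_1^{KG}(\mathcal D,I)$ coming from the flatness of $\mathcal D\otimes_K\omega_K(X)^{op}$. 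Once injectivity is established, the image of $\mathcal D\otimes_{KG}Q'$ in $\mathcal D\otimes_{KG}Q$ embeds into the image of $\mathcal D\otimes_{KG}\bigl(\bigoplus_{t\in T'}{}^GI_t'\bigr)$ in $\mathcal D\otimes_{KG}I'$, which has finite length since $\bigoplus_{t\in T'}{}^GI_t'$ is finitely generated. Your remark about $\mathcal D^n$ having finite length is not the mechanism that gives the bound.
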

  \begin{proof}
  	Let $I$ be a finitely generated $KG$-submodule of $KG$. The condition that $\D \otimes_K \omega_K(X)^{op}$ is flat has the following consequence.
  	
  	\begin{claim}\label{consflat}
	The canonical map
	\[
		\bigoplus_{t \in T} \Tor^{KG_t}_1(\D, I) \to \Tor^{KG}_1(\D, I)
	\]
	is an isomorphism.
	\end{claim}
	\begin{proof}
	Consider the short exact sequence of right $KG$-modules:
	\[
		0 \to \D \otimes_K \omega_K(X)^{op} \to \D \otimes_K K[X]^{op} \to \D \to 0.
	\]
	Applying $\otimes_{KG} (KG/I)$ and using that $\D \otimes_K \omega_K(X)^{op}$ is flat, we obtain that the map
	\begin{multline*}
	\Tor^{KG}_1\left(\D \otimes_K K[X]^{op}, I\right) \cong \Tor^{KG}_2\left(\D \otimes_K K[X]^{op}, KG/I\right) \to\\
	\Tor^{KG}_2(\D, KG/I) \cong \Tor^{KG}_1(\D, I)
	\end{multline*}
	is an isomorphism. By Shapiro Lemma, we have
	\[
		\Tor^{KG}_1\left(\D \otimes_K K[X]^{op}, I\right) 	\cong \bigoplus_{t \in T} \Tor^{KG_t}_1(\D, I).
	\]
	This completes the proof.
	\end{proof}
	
	According to \cref{pro: standard_modules} there are $KG_t$-modules $I_t$ and a projective $KG$-module $Q$ such that
	\[
		0 \to Q \xrightarrow{\gamma} I' \xrightarrow{\tau} I \to 0
	\]
	is exact, where $I' = \oplus_{t \in T} {}^G I_{G_t}$.
	
	\begin{claim}\label{trivialkernel}
		The map $\Id_{\D}\otimes \gamma: \D\otimes_{KG} Q \to \D\otimes_{K G}  I'$ is injective.
	\end{claim}
	\begin{proof}
	By \cref{pro: standard_modules} the canonical map
	\[
		\Tor_1^{KG}(\D,  {I'}) \to \bigoplus_{t \in T} \Tor_1^{KG_t}(\D, I)
	\]
	is an isomorphism. Combining this with \cref{consflat}, we obtain the desired result.
	\end{proof}
	
	Let $J$ be a finitely generated $KG$-submodule of $I'$ such that $\tau(J) = I$. Put $N = J \cap Q$. By Claim~\ref{trivialkernel}, we have the following commutative diagram with exact lower row:
	\[
	\begin{tikzcd}
	& \D \otimes_{KG} N \arrow[r] \arrow[d]  & \D \otimes_{KG} J \arrow[d] \\
	0 \arrow[r] & \D \otimes_{KG} Q \arrow[r, "{\Id_{\D} \otimes \gamma}"] & \D \otimes_{KG} I'
	\end{tikzcd}
	\]
	Therefore, the image of $\D \otimes_{KG} N$ in $\D \otimes_{KG}  Q$ has finite length. Thus, by \cref{fg_submodule}, we obtain that there exists a finitely generated submodule $N'$ such that $N \leq N' \leq Q$. In particular, there exist finitely many elements $t_1, \ldots, t_n \in T$ and finitely generated $KG_{t_i}$-submodules $I_i$ of $I_{t_i}$ such that $N', J \subseteq KG(I_1 + \ldots + I_n) = J'$. Note that
	\[
		J'\cong {}^GI_1\oplus\ldots \oplus {}^GI_n.
	\]
	Let $S$ be a finite generating set of the $KG$-module $J'$. For each $s \in S$, let $j_s \in J$ be such that $\tau(j_s) = \tau(s)$. Then $J' \cap Q$ is generated by $N$ and the set $\{s - j_s : s \in S\}$. Thus, $J' \cap Q$ is generated by $N'$ and $\{s - j_s : s \in S\}$, and hence is finitely generated. Now, since each $I_i$ is a finitely generated $KG_{t_i}$-submodule of a free module, by coherence, it is finitely presented. Therefore, putting all together we get that
	\[
		I \cong J' / (J' \cap Q)
	\]
	is finitely presented.
\end{proof}

\begin{proof}[Proof of \cref{teo:coherencegroupalgebras}] By \cref{one-relator_facts}(3), the group $G$ is locally indicable. 
    Let $X=G\cdot x_0\sqcup G/A\sqcup G/B$. By \cref{onerelatorpair}, $\cd_{K}(G,X)\le 2$ and $\omega_{K}(G,X)$ is one-relator module. By \cref{onerelatorflat}, the right $KG$-module $\mathcal D_{K G}\otimes_{K}\omega_{K}(X)^{op}$ is flat. Therefore by \cref{coherencegroupalgebraspais}, $K G$ is coherent.
\end{proof}

\subsection{Group algebras of hyperbolic-by-cyclic groups}
It is likely that \cref{hyperbolic_extension} also holds for group algebras. However, some ingredients are missing. For instance, we do not know the following, which, by contrast, is straightforward for groups.

\begin{Conj}
\label{conj:prod}
    If $H$ is a group so that $\Q[H]$ is coherent, then $\Q[H\times \Z]$ is coherent.
\end{Conj}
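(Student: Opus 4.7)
The conjecture amounts to coherence of the Laurent polynomial ring $\Q H[t, t^{-1}] = \Q[H\times\Z]$, whose general abstract analogue ``$R$ coherent $\Rightarrow R[t, t^{-1}]$ coherent'' fails (Soublin); any proof must therefore exploit the group-ring structure. My plan is to attack the conjecture through the group-pair coherence theorem \cref{coherencegroupalgebraspais} applied to $G = H\times\Z$.

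The first step is to choose a group pair $\mathcal P = (G, X)$ whose stabiliser group algebras are coherent. Taking $X = G\cdot x_0 \sqcup G/H$, the stabilisers are either trivial or conjugate to $H$, so $\Q G_x$ is coherent in every case. Projection of $\omega_{\Q}(X)\subset \Q G \oplus \Q[G/H]$ onto the first summand yields the short exact sequence
\begin{equation*}
0 \to \Q G \to \omega_{\Q}(X) \to I_{\Q[G/H]} \to 0.
\end{equation*}
A standard Shapiro calculation gives $\prd_{\Q G}\Q[G/H] = \cd_{\Q}(H)$, and the long exact Ext sequence then shows $\prd_{\Q G} I_{\Q[G/H]} = \cd_{\Q}(H)$. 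Hence $\cd_{\Q}(G, X) = \cd_{\Q}(H)+1$, and the hypothesis $\cd_{\Q}(G, X)\leqslant 2$ of \cref{coherencegroupalgebraspais} is satisfied only when $\cd_{\Q}(H)\leqslant 1$. For $\cd_\Q(H)$ larger, one needs a richer $X$ encoding a $\Q H$-resolution of $\Q$; the cleanest route is to use the HNN-type description $G = H*_H$ together with the graph-of-group-pairs sequence of \cref{gogp_sequence} and the dimension bound of \cref{gp_dimension}, producing a pair $(G, X)$ with stabilisers among finitely generated subgroups of $H$ and with $\omega_\Q(X)$ of projective dimension at most $1$.

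Once a pair of cohomological dimension at most $2$ is in hand, the decisive step is the flatness of $\mathcal D\otimes_{\Q}\omega_{\Q}(X)^{op}$ as a right $\Q G$-module for some Artinian overring $\mathcal D\supseteq \Q G$. If $H$ satisfies the weak Atiyah conjecture, then by \cite[Lemma 4.1]{SP_Atiyah} so does $G = H\times\Z$, and one may take $\mathcal D = \mathcal R_{\Q G}$. The relation module implicit in the sequence above is essentially generated by $t-1$, which is a non-zero-divisor in the $*$-regular ring $\mathcal R_{\Q G}$; consequently $\Tor_1^{\Q G}(\mathcal R_{\Q G}, \omega_{\Q}(X)^{op}) = 0$ should follow from the one-relator flatness argument of \cref{preflat}. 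Combining these ingredients yields the conjecture at least when $\cd_\Q(H) < \infty$ and $H$ satisfies the weak Atiyah conjecture.

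The principal obstacle is that the conjecture is stated without either of those hypotheses. Without a bound on $\cd_\Q(H)$, producing a group pair of cohomological dimension at most $2$ requires genuinely new ideas, since the stabilisers available to us are dictated by the geometric decomposition of $H$ and cannot be freely enlarged. Far more serious is the absence of the weak Atiyah conjecture: without an Artinian overring of $\Q G$ the flatness hypothesis of \cref{coherencegroupalgebraspais} cannot even be formulated, and the entire strategy breaks down. A realistic intermediate target is therefore: \emph{if $\Q H$ is coherent, $\cd_{\Q}(H)<\infty$, and $H$ satisfies the weak Atiyah conjecture, then $\Q[H\times\Z]$ is coherent}. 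Removing the latter two hypotheses will likely require either substantial progress on the weak Atiyah conjecture for general groups with coherent group rings, or the development of entirely new techniques for coherence of non-commutative Laurent polynomial extensions that avoid Artinian localisations altogether.
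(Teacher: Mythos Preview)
The statement you are addressing is labelled a \emph{Conjecture} in the paper, and the surrounding text is explicit that the authors regard it as open (``we do not know the following, which, by contrast, is straightforward for groups''). There is therefore no proof in the paper to compare against, and your proposal is likewise not a proof but a strategy sketch whose principal obstructions you correctly identify in your final paragraph.

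That said, even the restricted ``intermediate target'' you propose is not established by what you wrote; there are two concrete gaps. First, your suggestion to handle $\cd_\Q(H)>1$ via \cref{gogp_sequence} and \cref{gp_dimension} applied to the HNN description $G = H*_H$ requires an input pair $(H,Y)$ with $\cd_\Q(H,Y)\le 1$. By \cref{cd=1} this forces $H$ to split (essentially) as a free product of the stabilisers $H_y$, and the formula in \cref{gogp_sequence} then shows that the stabilisers of the resulting $G$-set are of the form $H_y\times\Z$; you have reduced to smaller instances of the very conjecture you are trying to prove, not to something already known. Second, the flatness hypothesis of \cref{coherencegroupalgebraspais} is not verified even when $\cd_\Q(H)\le 1$. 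Your claim that the relation module ``is essentially generated by $t-1$'' is incorrect: presenting $G = H\times\Z$ as a quotient of $H*\langle t\rangle$, the relation module is generated as a $\Q G$-module by the images of the commutators $[t,h]$ for $h$ ranging over a generating set of $H$, so $\omega_\Q(X)$ is a one-relator $\Q G$-module only when $H$ is cyclic. \cref{preflat} therefore does not apply, and establishing flatness of $\mathcal R_{\Q G}\otimes_\Q \omega_\Q(X)^{op}$ in this setting would itself require a genuinely new argument.
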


If \cref{conj:prod} were proven true, then by replacing the use of the results of Karrass--Solitar with results of Lam \cite{La77} and Aberg \cite{Ab82} and the result of Feighn--Handel with \cite[Theorem 3.4]{JL23} in the proof of \cref{one_ended_extension}, we may show that if $H$ is a one-ended hyperbolic group which is not cocompact Fuchsian, then $\Q[H\rtimes\Z]$ is coherent if $\Q[H]$ is. However, the case in which $H$ is a (virtual) surface group appears difficult and open.

\begin{Conj}
    If $\Sigma$ is a closed surface, and $G \cong \pi_1(\Sigma)\rtimes_{\psi}\Z$ for some $\psi\in \Out(\pi_1(\Sigma))$, then $\Q[G]$ is coherent.
\end{Conj}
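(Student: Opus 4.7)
The plan is to use the Nielsen--Thurston classification of $\psi \in \Out(\pi_1(\Sigma))$ to reduce to the hyperbolic case. The low-complexity cases dispose of themselves: if $\pi_1(\Sigma)$ is finite, or if $\Sigma$ is a torus or Klein bottle, then $G$ is virtually polycyclic and $\Q[G]$ is Noetherian by Hall's theorem, hence coherent. Assume therefore that $\Sigma$ has higher genus. After passing to a finite-index subgroup if necessary---legitimate because $\Q[G]$ is then a finitely generated free module over the group algebra of that subgroup, and coherence transfers along such extensions---we may assume that $\psi$ is periodic, reducible (with $\psi$ fixing each component of its canonical reducing multicurve), or pseudo-Anosov.

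In the periodic case, $G$ contains $\pi_1(\Sigma) \times \Z$ as a finite-index subgroup: if $\psi^n$ equals conjugation by $g \in \pi_1(\Sigma)$, then $g^{-1}t^n$ centralises $\pi_1(\Sigma)$, and $\langle \pi_1(\Sigma), g^{-1}t^n\rangle \cong \pi_1(\Sigma)\times\Z$ has index $n$. Since $\pi_1(\Sigma)$ is one-relator, $\Q[\pi_1(\Sigma)]$ is coherent by \cite{JL23}, and then \cref{conj:prod} gives coherence of $\Q[\pi_1(\Sigma) \times \Z]$, hence of $\Q[G]$. In the reducible case, the mapping torus splits along a collection of incompressible tori, giving $G$ the structure of the fundamental group of a finite graph of groups with $\Z^2$ edge groups and vertex groups of the form $F \rtimes_\phi \Z$ with $F$ the free fundamental group of a component of $\Sigma$ cut along the multicurve. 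By \cite[Theorem 3.4]{JL23} each $\Q[F \rtimes_\phi \Z]$ is coherent; since $\Q[\Z^2]$ is Noetherian, Aberg's theorem \cite{Ab82} yields coherence of $\Q[G]$.

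The pseudo-Anosov case is the main obstacle. Here, by Thurston, the mapping torus is a closed hyperbolic $3$-manifold and $G$ is a one-ended hyperbolic group, linear over $\CC$, so it satisfies the weak Atiyah conjecture. The natural attempt is to apply \cref{coherencegroupalgebraspais} with the group pair $\mathcal{P} = (G, X)$ where $X = G \cdot x_0 \sqcup G/\pi_1(\Sigma)$: the only non-trivial stabiliser is the surface group $\pi_1(\Sigma)$, whose group algebra is coherent by \cite{JL23}. However, a direct computation with \cref{firstseq} gives only $\cd_\Q(G, X) \leq 3$, because $G$ is itself $3$-dimensional, so the hypothesis of \cref{coherencegroupalgebraspais} is not satisfied. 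To complete the proof one would need either to exhibit a genuinely $2$-dimensional group pair structure on $G$---not visible from the geometry of the mapping torus---or to extend \cref{coherencegroupalgebraspais} to allow $\cd_\Q(G, X) = 3$ at the cost of an additional $\Tor_2$-vanishing or Cohen--Lyndon-type condition ensuring that the resolution of $\omega_\Q(X)$ splits appropriately over the stabilisers. Identifying and verifying such a condition in the pseudo-Anosov setting appears to be the central difficulty.
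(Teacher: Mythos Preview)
This statement is labelled a \emph{Conjecture} in the paper; the authors offer no proof and explicitly call the (virtual) surface-by-$\Z$ case ``difficult and open'' in the paragraph immediately preceding it. There is therefore no paper proof to compare against, and your proposal is not a proof either---as you yourself concede in the final paragraph.

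Your Nielsen--Thurston reduction is a sensible way to organise the problem, and the reducible case does genuinely reduce (after passing to a finite-index subgroup) to the known coherence of $\Q[F\rtimes\Z]$ for free $F$ via \cite[Theorem~3.4]{JL23} and \cite{Ab82}. But there are two gaps, not one. In the periodic case you invoke \cref{conj:prod} to pass from coherence of $\Q[\pi_1(\Sigma)]$ to that of $\Q[\pi_1(\Sigma)\times\Z]$; that statement is itself posed as an open conjecture in the paper, so even your ``easy'' case rests on an unproven claim. In the pseudo-Anosov case you correctly observe that the natural group pair $(G,\, G\cdot x_0\sqcup G/\pi_1(\Sigma))$ has $\cd_\Q = 3$, so \cref{coherencegroupalgebraspais} does not apply, and the extension you sketch is purely speculative. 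What your analysis achieves is to isolate the pseudo-Anosov case---where $G$ is a closed hyperbolic $3$-manifold group---as the irreducible core of the conjecture modulo \cref{conj:prod}; this refines but does not resolve the problem the authors pose.
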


\section{Farrell-Jones conjecture and group pairs} \label{sec: FJ_conj}

In this section, we prove the following theorem,  
which implies \cref{teoK_0_intro}.
\begin{teo} Let $G$ be a   group,    
$R$   a regular ring and $\mathcal{P}=(G,X)$ a group pair. Assume that
\begin{enumerate}
    \item $\cd_R(G)<\infty$ and the group ring $RG$ is coherent,
    \item    
$ (G, X)$ satisfies the Cohen-Lyndon property   over $R$,
    \item $\cd_R(N_G(G_x)/G_x)<\infty$ and the group ring $R[N_G(G_x)/G_x]$ is coherent for all $x\in X$. 
\end{enumerate}
    \label{teoK_0}
    Then the  natural map
    \[
        K_0(RG) \oplus \bigoplus_{x\in X} K_0\big(R[N_G(G_x)/G_x]\big) 
        \longrightarrow K_0\big(R[\pi(\mathcal{P})]\big)
    \]
    is surjective.
\end{teo}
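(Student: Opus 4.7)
By Lemma~\ref{K0G0}, identify $K_0(S) \cong G_0(S)$ for each of the three rings involved, so that we may work with modules of type FP throughout. Let $N = N_{\mathcal{P}}$ and let $P$ be a finitely generated projective $R[G/N]$-module; view it as an $RG$-module via the quotient $G \to G/N$, so that $N$ acts trivially on $P$ and $P$ is projective (in particular flat) over $R$. The strategy is to exhibit $[P]\in K_0(R[G/N])$ as an explicit alternating sum in $G_0(R[G/N])$ whose terms lie in the images of the two given maps.

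The key input is the short exact sequence of $RG$-modules
$$0 \to \omega_R(\St_{\mathcal{P}}) \to R[\St_{\mathcal{P}}] \to R \to 0,$$
with $G$ acting on $\St_{\mathcal{P}}$ by conjugation. Tensoring over $R$ with $P$ (exact since $P$ is $R$-projective) produces a short exact sequence of $RG$-modules whose left-hand term $\omega_R(\St_{\mathcal{P}}) \otimes_R P$ is $RN$-projective, because $\omega_R(\St_{\mathcal{P}})$ is $RN$-projective by the Cohen--Lyndon property over $R$. Applying $N$-coinvariants $(-)_N = R[G/N]\otimes_{RG}-$ and using that the higher $\Tor$s of the $RN$-projective left-hand term vanish, we obtain an exact sequence of $R[G/N]$-modules
$$0 \to \omega_R(T_N)\otimes_R P \to R[T_N]\otimes_R P \to P \to 0,$$
where $T_N$ denotes the $G/N$-set of $N$-orbits in $\St_{\mathcal{P}}$. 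The standard isomorphism $R[G/H]\otimes_R V \cong RG\otimes_{RH} V|_H$ (combined with the fact that each $K\in\St_{\mathcal{P}}$ lies in $N$ and hence acts trivially on $P$) decomposes the middle term as $\bigoplus_{K\in T_G} R[G/N]\otimes_{R[N_G(K)/K]} P|_{N_G(K)/K}$, a sum of modules induced from the subgroups $N_G(K)/K \hookrightarrow G/N$.

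In $G_0(R[G/N])$ this yields the identity $[P] = [R[T_N]\otimes_R P] - [\omega_R(T_N)\otimes_R P]$. The first term is a sum of classes induced from $R[N_G(K)/K]$-modules, so it lies in the image of $\bigoplus_x K_0(R[N_G(G_x)/G_x])$; the second term equals the image of $[\omega_R(\St_{\mathcal{P}})\otimes_R P]\in K_0(RG)$ under $K_0(RG)\to K_0(R[G/N])$, because the only non-vanishing $\Tor$ with $R[G/N]$ over $RG$ sits in degree zero, where it produces exactly the coinvariants $\omega_R(T_N)\otimes_R P$.

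The main obstacle is the bookkeeping: verifying that every module appearing is of type FP over its respective ring (so that its class is well defined in the corresponding $G_0$), and that only finitely many summands indexed by $T_G$ contribute non-trivially for a given $P$. This requires using coherence of $RG$ and of each $R[N_G(G_x)/G_x]$ (to promote $FP_1$ to $FP_\infty$), the hypotheses $\cd_R(G) < \infty$ and $\cd_R(N_G(G_x)/G_x) < \infty$ applied through Lemma~\ref{critfinitepd} (to control projective dimensions), and a careful argument exploiting the finite generation of $P$ together with the Cohen--Lyndon structure to reduce to finite sums. This last reduction is the most delicate step and is where the hypotheses on $\cd_R$ and coherence must be combined most carefully.
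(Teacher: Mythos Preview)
Your approach has a genuine gap that cannot be dismissed as bookkeeping. The modules appearing in your key short exact sequence
\[
0 \longrightarrow \omega_R(T_N)\otimes_R P \longrightarrow R[T_N]\otimes_R P \longrightarrow P \longrightarrow 0
\]
are typically \emph{not} of type $\FP$ over $R[G/N]$, so the identity $[P]=[R[T_N]\otimes_R P]-[\omega_R(T_N)\otimes_R P]$ is simply not defined in $G_0(R[G/N])$. Take $P=R[G/N]$. By the tensor identity you invoke, each summand $R[(G/N)/(N_G(K)/K)]\otimes_R P$ is isomorphic to $R[G/N]\otimes_{R[N_G(K)/K]}P|_{N_G(K)/K}$; but $P|_{N_G(K)/K}$ is free of rank $|G/N:N_G(K)/K|$, so this summand is free over $R[G/N]$ of that (generally infinite) rank. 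Likewise, $\omega_R(\St_{\mathcal P})\otimes_R P\cong RG\otimes_{RN}\omega_R(\St_{\mathcal P})|_N$ by the projection formula, and $\omega_R(\St_{\mathcal P})$ is finitely generated over $RN$ only when $N\backslash\St_{\mathcal P}$ is finite, i.e.\ when there are finitely many $G$-orbits in $\St_{\mathcal P}$ and every index $|G/N:N_G(K)/K|$ is finite. Neither is assumed. There is thus no ``reduction to finite sums'': for generic $P$ these modules are genuinely not finitely generated, and no amount of coherence or finite cohomological dimension will repair this.

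The paper's argument proceeds in the opposite direction and thereby sidesteps the issue. Rather than tensoring a fixed projective $R[G/N]$-module with the augmentation sequence, it lifts an arbitrary $\overline M$ of type $\FP$ to a finite projective $RG$-resolution of an $RG$-module $M$, and then analyses $\Tor_1^{RG}(R[\pi(\mathcal P)],M_i)$ for the successive syzygies $M_i$. The Cohen--Lyndon property identifies each such Tor with $\bigoplus_{t}R[\pi(\mathcal P)]\otimes_{R[G_t/N_t]}\Tor_1^{RN_t}(R,M_i)$, and the crucial point is that the hypothesis ``$\overline M$ of type $\FP$'' forces each $\Tor_1^{RN_t}(R,M_i)$ to be of type $\FP$ over $R[G_t/N_t]$ and all but finitely many of them to vanish. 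This finiteness is extracted from $\overline M$ via an $\FP_1$ argument on the kernel of a presentation; it is not a property of the augmentation sequence itself, which is why your direct approach cannot see it.
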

In what follows, we assume that the hypotheses of the theorem hold.  
We divide the proof into several lemmas.

Let $\overline{M}$ be an $R[\pi(\mathcal{P})]$-module of type $\FP_1$.  
A \emph{lifting} of $\overline{M}$ is a finite resolution of an $RG$-module $M$ consisting of finitely generated projective $RG$-modules
\begin{equation}
    \label{resolutionM}
    0 \to L_n \xrightarrow{\tau_n} \dotsb \xrightarrow{\tau_2} L_1 \xrightarrow{\tau_1} L_0 \xrightarrow{\tau_0} M \to 0,
\end{equation}
such that
\[
    \overline{M} \cong R[\pi(\mathcal{P})] \otimes_{RG} M.
\]
Since $RG$ is coherent and, by \cref{critfinitepd}, any $RG$-module has finite projective dimension, a lifting of $\overline{M}$ always exists.  We refer to $n$ as the \emph{length} of the lifting in \cref{resolutionM}.
  We put $M_i = \operatorname{im} \tau_i$ and $\overline{M_i} = R[\pi(\mathcal{P})] \otimes_{RG} M_i$. Notice that  
\[
    0 \to L_n \xrightarrow{\tau_n} \dotsb \xrightarrow{\tau_{i+1}} L_i \xrightarrow{\tau_i} M_i \to 0
\]
is a lifting of $\overline{M_i}$.  
We call this lifting  an \emph{induced} lifting of $\overline {M_i}$.

 We have the following exact sequence:
\begin{equation}
\label{liftingM}    
0 \longrightarrow M_1 \longrightarrow L_0 \longrightarrow M \longrightarrow 0.
\end{equation}
Applying the functor $R[\pi(\mathcal{P})] \otimes_{RG} -$ to this sequence yields the long exact sequence
\[
\begin{aligned}
    0 \longrightarrow \Tor_1^{RG}\big(R[\pi(\mathcal{P})], M\big) 
    &\longrightarrow \overline{M_1}
    \longrightarrow R[\pi(\mathcal{P})] \otimes_{RG} L_0 \\
    &\longrightarrow \overline{M} \longrightarrow 0.
\end{aligned}
\]
Thus, it is natural to ask whether we can give a “nice” description
of the $R[\pi(\mathcal{P})]$-module $\Tor_1^{RG}\big(R[\pi(\mathcal{P})], M\big)$.  
We will do this using the fact that the group pair $\mathcal{P}$ satisfies the Cohen-Lyndon property  over~$R$.

Let $Y=\St_{\mathcal P}$. We  put   $N=N_{\mathcal P}$. By the definition of the Cohen-Lyndon property over $R$ we have that   for any subgroup $y\in Y$, $N_y=y$. Observe, that $Y$ is also a $G$-set, and so $R[Y]$ and $\omega_R(Y)$ are also $RG$-modules.  Observe that the $G$-stabiliser of a point in  $Y$ is the normilizer of the corresponding subgroup: $G_y=N_G(y)$.

\begin{lem}
\label{maindiagram}
There exists the following  commutative diagram with exact rows and columns:
\begin{equation}\label{diagramM}
\begin{tikzcd}[column sep=small, row sep=large]
0 \arrow[r] & \Tor^{RG}_1(R[\pi(\mathcal{P})], M) \arrow[r, "\beta _M"]  & \overline{M_1}    \\
0 \arrow[r] & \Tor^{RG}_1(R[\pi(\mathcal{P})], R[Y] \otimes_R M) \arrow[r] \arrow[u, "\alpha_{M}"'] & R[\pi(\mathcal{P})] \otimes_{RG} (R[Y] \otimes_R M_1) \arrow[u]\\
& 0 \arrow[u]
\end{tikzcd}
\end{equation}
   Moreover, if $M$ is a submodule of a free $RG$-module, then $\alpha_M$ is an isomorphism.
\end{lem}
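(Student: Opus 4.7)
My plan is to build the diagram by combining two short exact sequences of $RG$-modules and applying the functor $R[\pi(\mathcal{P})] \otimes_{RG} - \simeq R \otimes_{RN}^{L} -$ (the equivalence arising from change-of-rings, using that $RG$ is $RN$-free). The first short exact sequence is the truncated lifting $0 \to M_1 \to L_0 \to M \to 0$; the second is the sequence $0 \to R[Y] \otimes_R M_1 \to R[Y] \otimes_R L_0 \to R[Y] \otimes_R M \to 0$, obtained by tensoring the first with $R[Y]$ over $R$ and giving the tensor products the diagonal $G$-action (exactness is automatic since $R[Y]$ is $R$-free). The vertical maps $\alpha_M$ and its right-hand partner will be induced by the augmentation $R[Y] \twoheadrightarrow R$, and commutativity will follow from the naturality of the Tor connecting homomorphisms.

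The key technical step will be to establish that for any $RG$-projective $P$, the tensor product $R[Y] \otimes_R P$ with the diagonal action is again $RG$-projective. The proof I would use is the twist isomorphism $y \otimes g \mapsto g^{-1} y \otimes g$ on $R[Y] \otimes_R RG$, which identifies the diagonal action with the action that is trivial on the first factor and by left multiplication on the second, exhibiting $R[Y] \otimes_R RG$ as the free $RG$-module on $Y$. Taking summands handles the general case, and the parallel twist (using that $\omega_R(Y)$ is $R$-free on $Y \setminus \{y_0\}$ for any base point $y_0$) shows analogously that $\omega_R(Y) \otimes_R P$ is $RG$-projective. Applied to $L_0$, this yields the vanishing of $\Tor^{RG}_1(R[\pi(\mathcal{P})], R[Y] \otimes_R L_0)$, so the bottom row of the diagram begins with $0$ just like the top row does by projectivity of $L_0$.

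For the ``moreover'' statement, my plan is to apply the Tor long exact sequence to $0 \to \omega_R(Y) \otimes_R M \to R[Y] \otimes_R M \to M \to 0$, which realises $\alpha_M$ as the middle map flanked on one side by $\Tor^{RN}_1(R, \omega_R(Y) \otimes_R M) \to \Tor^{RN}_1(R, R[Y] \otimes_R M)$ and on the other by the connecting homomorphism $\partial_M \colon \Tor^{RN}_1(R, M) \to R \otimes_{RN}(\omega_R(Y) \otimes_R M)$. Comparing via the naturality $M \hookrightarrow F$ with the analogous sequence for the free $F$, whose every term is $RN$-projective by the previous paragraph (so that all higher $\Tor$-terms vanish), I would argue that both flanking maps contribute trivially, forcing $\alpha_M$ to be an isomorphism. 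The main obstacle will be the final naturality chase, which requires the injectivity of $R \otimes_{RN}(\omega_R(Y) \otimes_R M) \to R \otimes_{RN}(\omega_R(Y) \otimes_R F)$; proving this in the required generality is the subtlest step and is where the Cohen--Lyndon structure of $\mathcal{P}$ over $R$ (via Chiswell's theorem applied to the free product decomposition $N = *_{y \in T} y$ obtained from \cref{cd=1}) is genuinely needed.
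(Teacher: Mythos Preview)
Your construction of the diagram is essentially the same as the paper's: apply $R[\pi(\mathcal{P})]\otimes_{RG}-$ to the map of short exact sequences induced by the augmentation $R[Y]\to R$, and use the twist to see that $R[Y]\otimes_R L_0$ and $\omega_R(Y)\otimes_R L_0$ are $RG$-projective. That part is fine.

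There is, however, a genuine gap. The ``$0$'' at the bottom of the column in~\eqref{diagramM} asserts that $\alpha_M$ is \emph{injective for every $M$}, not only in the ``moreover'' situation, and your proposal never establishes this. Injectivity comes from the long exact sequence you write down: one needs $\Tor_1^{RN}(R,\omega_R(Y)\otimes_R M)=0$. Your twist argument only shows $\omega_R(Y)\otimes_R P$ is $RG$-projective when $P$ is projective; it says nothing for general $M$. The paper's argument is simply that $\omega_R(Y)$ is $RN$-projective by definition of Cohen--Lyndon over $R$, and then for any $RN$-module $V$ one has $(\omega_R(Y)\otimes_R V)_N\cong \omega_R(Y)^{op}\otimes_{RN} V$, so $\Tor_i^{RN}(R,\omega_R(Y)\otimes_R V)\cong \Tor_i^{RN}(\omega_R(Y)^{op},V)=0$ for $i>0$ by flatness. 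This is also exactly what you need for your ``subtlest step'': the injectivity of $R\otimes_{RN}(\omega_R(Y)\otimes_R M)\to R\otimes_{RN}(\omega_R(Y)\otimes_R F)$ is just $\omega_R(Y)^{op}\otimes_{RN} M\hookrightarrow \omega_R(Y)^{op}\otimes_{RN} F$, which holds because $\omega_R(Y)^{op}$ is $RN$-flat.

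Your proposed route to that step---invoking \cref{cd=1} to get a free-product decomposition $N=\ast_{y\in T}y$ and then applying Chiswell---does not work under the stated hypotheses. The Cohen--Lyndon property \emph{over $R$} only gives $RN$-projectivity of $\omega_R(Y)$; the free-product decomposition requires Cohen--Lyndon over $\Z$ (compare \cref{CL_projective}). So your justification is misdirected, even though the claim itself is true for the much simpler reason above. Once you have the vanishing $\Tor_i^{RN}(R,\omega_R(Y)\otimes_R-)=0$ for all $i>0$, both the injectivity of $\alpha_M$ and your naturality chase for surjectivity go through immediately. The paper, for comparison, handles the ``moreover'' by dimension-shifting: it replaces $M$ by $L_{-1}/M$, observes that the analogous map in $\Tor_2$ is an isomorphism (again from the same vanishing), and shifts back.
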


\begin{proof}
The diagram is obtained by applying the functor $R[\pi(\mathcal{P})] \otimes_{RG} (-)$ to the following diagram of $RG$-modules:
\[
\begin{tikzcd}[column sep=large, row sep=large]
& 0   & & 0   &    \\
0 \arrow[r] & M_1 \arrow[r] \arrow[u]  & L_0 \arrow[r]   & M \arrow[r] \arrow[u] & 0 \\
0 \arrow[r] & R[Y] \otimes_R M_1 \arrow[r] \arrow[u] & R[Y] \otimes_R L_0 \arrow[r] \arrow[u] & R[Y] \otimes_R M \arrow[r] \arrow[u] & 0 \\
& \omega_R(Y) \otimes_R M_1 \arrow[u] & & \omega_R(Y) \otimes_R M   \arrow[u] &   \\
& 0 \arrow[u]& & 0 \arrow [u]&
\end{tikzcd}
\]
We   show that $\alpha_M$ is injective. There should appear $$\Tor_1^{RG}(R[\pi(\mathcal{P})],  \omega_R(Y)\otimes_R M)\cong \Tor_1^{RN}(R, \omega_R(Y)\otimes_R M)$$ and it is equal to zero because $\omega_R(Y)$ is flat as $RN$-module.

If $M$ is a submodule of a free $RG$-module $L_{-1}$, we obtain an exact sequence
\[
0 \longrightarrow \omega_R(Y) \otimes_R (L_{-1} / M)
\longrightarrow R[Y] \otimes_R (L_{-1} / M)
\longrightarrow L_{-1} / M \longrightarrow 0.
\]
As before, taking into account that $\omega_R(Y)$ is flat, we obtain that the natural map
\[
\alpha_{L_{-1}/M} \colon 
\Tor_2^{RG}\big(R[\pi(\mathcal{P})],\, R[Y] \otimes_R (L_{-1} / M)\big)
\longrightarrow
\Tor_2^{RG}\big(R[\pi(\mathcal{P})],\, L_{-1} / M\big)
\]
is an isomorphism.

Moreover, the dimension shifting provides the following commutative diagram, in which the vertical maps are isomorphisms:
\[
\begin{tikzcd}[column sep=large, row sep=large]
\alpha_{L_{-1}/M} \colon 
\Tor_2^{RG}\big(R[\pi(\mathcal{P})],\, R[Y] \otimes_R (L_{-1} / M)\big) 
\arrow[r] \arrow[d]
& \Tor_2^{RG}\big(R[\pi(\mathcal{P})],\, L_{-1} / M\big) \arrow[d] \\
\alpha_{M} \colon
\Tor_1^{RG}\big(R[\pi(\mathcal{P})],\, R[Y] \otimes_R M\big) 
\arrow[r]
& \Tor_1^{RG}\big(R[\pi(\mathcal{P})],\, M\big)
\end{tikzcd}
\]

This proves that $\alpha_M$ is an isomorphism.
\end{proof}
We say that the lifting \cref{resolutionM} of $\overline M$ is \emph{complete} if the map $\alpha_M$ from \cref{maindiagram} is an isomorphism. Observe that, by \cref{maindiagram}, any induced lifting is complete.

\begin{lem}\label{completeexists}
    Let $\overline M $ be an $R[\pi(\mathcal P)]$-module of type $\FP_2$. Then there exists a complete lifting for $\overline M $
\end{lem}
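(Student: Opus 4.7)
The plan is to first lift a finite presentation of $\overline{M}$ to the $RG$-level, then extend it to a finite projective resolution using coherence of $RG$ together with finite projective dimension, and finally to verify completeness via the first syzygy being a submodule of a free module.

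Since $\overline{M}$ is of type $\FP_2$ over $R[\pi(\mathcal{P})]$, I would begin by picking a partial presentation $\overline{L_1} \xrightarrow{\overline{d}} \overline{L_0} \to \overline{M} \to 0$ by finitely generated free $R[\pi(\mathcal{P})]$-modules whose first syzygy is finitely generated. Lifting each $\overline{L_i}$ to a finitely generated free $RG$-module $L_i$, and lifting $\overline{d}$ to an $RG$-homomorphism $d\colon L_1 \to L_0$, I would set $M := L_0/\im d$. By right-exactness of $R[\pi(\mathcal{P})] \otimes_{RG} (-)$, the quotient $M$ satisfies $R[\pi(\mathcal{P})] \otimes_{RG} M \cong \overline{M}$, so that $M$ is a candidate lifting of $\overline{M}$.

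Next, I would extend this partial resolution. The first syzygy $M_1 := \im d$ is a finitely generated submodule of the free $RG$-module $L_0$. By coherence of $RG$, $M_1$ is finitely presented, and iterating, each successive syzygy is finitely generated. By \cref{critfinitepd}, together with $\cd_R(G) < \infty$ and the regularity of $R$, one has $\prd_{RG}(M) \leq \cd_R(G) + \prd_R(M) < \infty$, so the resolution terminates, yielding a finite projective resolution $0 \to L_n \to \cdots \to L_0 \to M \to 0$ by finitely generated projective $RG$-modules -- a lifting of $\overline{M}$ in the sense of \eqref{resolutionM}.

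To establish completeness, I would exploit the fact that $M_1 \hookrightarrow L_0$ exhibits $M_1$ as a submodule of a free $RG$-module. By \cref{maindiagram}, this forces $\alpha_{M_1}$ to be an isomorphism -- equivalently, the induced lifting $0 \to L_n \to \cdots \to L_1 \to M_1 \to 0$ of $\overline{M_1}$ is complete (as remarked in the text). Completeness of the lifting of $\overline{M}$ would then be derived by transferring the isomorphism from $M_1$ to $M$ through the short exact sequence $0 \to M_1 \to L_0 \to M \to 0$ and its tensor products with $\omega_R(Y)$ and $R[Y]$ over $R$, using naturality of the $\alpha$-maps, the exactness of all rows and columns in the diagram of \cref{maindiagram}, and the fact that $R[Y] \otimes_R L_0$ is $RG$-projective via the standard diagonal-to-trivial isomorphism $R[Y] \otimes_R RG \cong R[Y]_{|R} \otimes_R RG$ as $RG$-modules.

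The main obstacle will be this final transfer of completeness. Direct dimension-shifting through $0 \to M_1 \to L_0 \to M \to 0$ only yields $\Tor_{k+1}^{RG}(R[\pi(\mathcal{P})],M) \cong \Tor_k^{RG}(R[\pi(\mathcal{P})],M_1)$ in degrees $k \geq 1$, and so does not directly relate the degree-one maps $\alpha_M$ and $\alpha_{M_1}$. Bridging this gap requires a careful bookkeeping diagram chase that invokes the Cohen--Lyndon property over $R$ -- specifically the projectivity of $\omega_R(Y)$ over $RN_{\mathcal{P}}$ -- to simultaneously compare the exact sequences obtained by tensoring $0 \to M_1 \to L_0 \to M \to 0$ with $\omega_R(Y)$, $R[Y]$, and $R$, and then pasting the resulting diagrams of Tor long exact sequences together.
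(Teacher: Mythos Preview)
Your proposal has a genuine gap at the transfer step, and it is not a gap that a diagram chase will close.

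The map $\alpha_M\colon \Tor_1^{RG}(R[\pi(\mathcal P)],R[Y]\otimes_R M)\to \Tor_1^{RG}(R[\pi(\mathcal P)],M)$ depends only on $M$, not on the chosen resolution. Knowing that $\alpha_{M_1}$ is an isomorphism (because $M_1\hookrightarrow L_0$) tells you, via dimension shifting along $0\to M_1\to L_0\to M\to 0$, only that the \emph{degree-two} analogue of $\alpha$ for $M$ is an isomorphism. But that is automatic for every $M$: the Cohen--Lyndon hypothesis makes $\omega_R(Y)\otimes_R(-)$ acyclic for $H_*(N,-)$ in positive degrees, so the long exact sequence coming from $0\to\omega_R(Y)\otimes_R M\to R[Y]\otimes_R M\to M\to 0$ already forces $\alpha_M$ to be an isomorphism in degrees $\ge 2$ and injective in degree~$1$. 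Thus the completeness of the induced lifting of $\overline{M_1}$ carries no information whatsoever about surjectivity of $\alpha_M$ in degree~$1$. Your ``bookkeeping diagram chase'' cannot produce this missing information, because after pasting the three Tor long exact sequences together every square you could chase through already commutes trivially. Note also that your argument never uses the $\FP_2$ hypothesis for completeness, only $\FP_1$; this is a warning sign.

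The paper proceeds in an essentially different way. Instead of lifting an arbitrary presentation and hoping $\alpha_M$ is surjective, it \emph{constructs} $M$ so that surjectivity can be verified directly. Starting from a lift $U_0\subset L_0=(RG)^d$ of the relations, one enlarges $U_0$ by throwing in, one at a time, the elements $u_j=(g-1)b$ with $g$ ranging over $\bigcup_{K\in Y}K$ and $b$ over an $R$-basis of $L_0$. The $\FP_2$ hypothesis on $\overline M$ is used precisely here: it forces the resulting direct system $\overline{U'_i}$ to stabilise after finitely many steps, so one may take $M_1=U_k$ and $M=L_0/U_k$ with $k$ finite. Surjectivity of $\alpha_M$ is then checked generator by generator: for each $u_j=(g-1)b$ one factors through $\Tor_1^{R\langle g\rangle}(R[\pi(\mathcal P)],M)$ via the Shapiro map $R[G/\langle g\rangle]\to R[Y]$, and reads off that $1\otimes u_j\in\overline{M_1}$ lies in the image of $\beta_M\circ\alpha_M$. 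The special shape $(g-1)b$ of the added relations is exactly what makes this factorisation available; a generic lift of a presentation has no reason to contain such elements.
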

\begin{proof}
Assume that $\overline{M}$ is generated by $d$ elements. Put $L_0 = (RG)^d$. Since $\overline{M}$ is of type $\FP_1$, there exists a finitely generated $RG$-submodule $U_0 \subseteq L_0$ such that  
\[
\overline{M} \cong R[\pi(\mathcal{P})] \otimes_{RG} (L_0/U_0).
\]

Fix an $R$-basis $B$ of $L_0$. Enumerate the set
\[
S = \{(g-1)b : g \in \bigcup_{K \in Y} K,\, b \in B\} = \{u_i : i \in \mathbb{N}\}.
\] 
For $i \geq 1$ define $U_i = U_{i-1} + RG\,u_i$ and 
\[
U_i' = U_i \Big/ \sum_{j=1}^i RG\,u_j.
\]
Observe that
\[
L_0 \Big/ \sum_{j=1}^\infty RG\,u_j \;\cong\; \bigl(R[\pi(\mathcal{P})]\bigr)^d,
\qquad\text{and}\qquad
L_0 \Big/ \bigl(U_0 + \sum_{j=1}^\infty RG\,u_j \bigr) \;\cong\; \overline{M}.
\]

Let 
\[
U' = \frac{U_0 + \sum_{j=1}^\infty RG\,u_j}{\sum_{j=1}^\infty RG\,u_j}
\qquad\text{and}\qquad
\overline{U'} = R[\pi(\mathcal{P})] \otimes_{RG} U'.
\]
We then have 
\[
\overline{M} \cong \frac{(R[\pi(\mathcal{P})])^d}{\overline{U'}}.
\]
Thus, since $\overline{M}$ is of type $\FP_2$, the module $\overline{U'}$ is of type $\FP_1$.

Put
\[
\overline{U_i'} = R[\pi(\mathcal{P})] \otimes_{RG} U_i'.
\] 
Observe that
\[
\overline{U'} \cong \varinjlim \overline{U_i'}.
\]
We have that $\overline{U'}$ is of type $\FP_1$, the homomorphisms in this direct system are surjective and $  {U_0}$ is finitely generated. Therefore,  there exists $k$ such that $\overline{U'} \cong \overline{U_k'}$.  

Put $M_1 = U_k$ and $M = L_0/M_1$. We will now show that $\alpha_M$ is surjective.  

For this, we need to prove that the image of the composition $\beta_M \circ \alpha_M$ contains the image of $1 \otimes u_j$ ($j = 1, \ldots, k$) in 
\[
\overline{M_1} = \overline{U_k} = R[\pi(\mathcal{P})] \otimes_{RG} U_k.
\] 
Put $u = u_j$ and assume $u = (g-1)b$ with $g \in   G_x$ for some $x \in X$ and $b\in B$.  
Then we have the following commutative diagram:
\[
\begin{tikzcd}[column sep=small, row sep=large]
\Tor^{RG}_1(R[\pi(\mathcal{P})], M) \arrow[r, "\beta_M"] & \overline{M_1} &   \\
\Tor^{RG}_1(R[\pi(\mathcal{P})], R[Y] \otimes_R M) \arrow[r] \arrow[u, "\alpha_M"'] 
   & R[\pi(\mathcal{P})] \otimes_{RG} (R[Y] \otimes_R M_1) \arrow[u, "\gamma_M"] & \\
\Tor^{R[\langle g\rangle]}_1(R[\pi(\mathcal{P})], M) \arrow[r, "\delta_M"] \arrow[u, "\psi_M"] 
   & R[\pi(\mathcal{P})] \otimes_{R[\langle g\rangle]} M_1 \arrow[r, "\theta_M"] \arrow[u, "\sigma_M"] 
   & R[\pi(\mathcal{P})] \otimes_{R[\langle g\rangle]} L_0
\end{tikzcd}
\]

Here $\psi_M$ is the composition of the natural isomorphism
\[
\Tor_1^{R[\langle g\rangle]}(R[\pi(\mathcal{P})], M) 
   \longrightarrow \Tor_1^{RG}(R[\pi(\mathcal{P})], R[G/\langle g\rangle] \otimes_R M)
\]
that exists by Shapiro Lemma and the natural map
\[
\Tor_1^{RG}(R[\pi(\mathcal{P})], R[G/\langle g\rangle] \otimes_R M) 
   \longrightarrow \Tor^{RG}_1(R[\pi(\mathcal{P})], R[Y] \otimes_R M),
\]
coming from the compositions of the maps $R[G/\langle g\rangle] \to R[G/N_G(G_x)]\hookrightarrow R[Y]$.  

Now observe that 
\[
1 \otimes u \in \ker \theta_M = \operatorname{im} \delta_M.
\]
Thus, there exists $v \in \Tor_1^{R[\langle g\rangle]}(R[\pi(\mathcal{P})], M)$ such that $$\delta_M(v) = 1 \otimes u\in R[\pi(\mathcal{P})] \otimes_{R[\langle g\rangle]} M_1.$$ Then we obtain
\[
1 \otimes u 
= \gamma_M (\sigma_M(\delta_M(v)) )
= \beta_M (\alpha_M(\psi_M(v)))\in \overline{M_1}.
\]
This finishes the proof of the lemma.\end{proof}
The previous lemma suggests that we have to understand the structure of the $R[\pi(\mathcal{P})]$-module $\Tor^{RG}_1(R[\pi(\mathcal{P})], R[Y] \otimes_R M)$. This is done in the following lemma.
\begin{lem} \label{tor1}
Let $T\subset Y$ be a set of representatives of $G$-orbits.
Then we have that 
$$\Tor^{RG}_1(R[\pi(\mathcal{P})], R[Y] \otimes_R M)\cong \bigoplus_{t\in T} R[\pi(\mathcal{P})]\otimes_{R[G_t/N_t]} \Tor_1^{RN_t}(R, M).$$
\end{lem}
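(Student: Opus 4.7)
The plan is to reduce the Tor computation to a single $G$-orbit of $Y=\St_{\mathcal{P}}$ via Shapiro's lemma, and then collapse a Lyndon--Hochschild--Serre spectral sequence for the normal subgroup $N_t\trianglelefteq G_t$.

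First, I would decompose $Y$ into $G$-orbits under conjugation, giving $R[Y]\cong\bigoplus_{t\in T}R[G/G_t]$ as $RG$-modules. For each $t$, the standard Frobenius-type isomorphism
\[
R[G/G_t]\otimes_R M \;\xrightarrow{\;\sim\;}\; RG\otimes_{RG_t}M, \qquad gG_t\otimes m\longmapsto g\otimes g^{-1}m,
\]
(diagonal $G$-action on the left, induced action on the first factor on the right) combines with Shapiro's lemma to yield
\[
\Tor_1^{RG}\bigl(R[\pi(\mathcal{P})],\,R[Y]\otimes_R M\bigr) \;\cong\; \bigoplus_{t\in T}\Tor_1^{RG_t}\bigl(R[\pi(\mathcal{P})],\,M\bigr).
\]
This reduces the statement to the local identity
\[
\Tor_1^{RG_t}\bigl(R[\pi(\mathcal{P})],\,M\bigr) \;\cong\; R[\pi(\mathcal{P})]\otimes_{R[G_t/N_t]}\Tor_1^{RN_t}(R,M)
\]
for each fixed $t\in T$.

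Next, I would observe that $\St_{\mathcal{P}}$ is closed under $G$-conjugation, so $N=N_{\mathcal{P}}$ is normal in $G$; hence $N_t=G_t\cap N$ is normal in $G_t$, and the composition $G_t\hookrightarrow G\twoheadrightarrow \pi(\mathcal{P})$ factors through an injection $\overline{G}_t:=G_t/N_t \hookrightarrow \pi(\mathcal{P})$. Decomposing $\pi(\mathcal{P})$ into right cosets of $\overline{G}_t$ then shows that $R[\pi(\mathcal{P})]$ is free---and in particular flat---as a left $R\overline{G}_t$-module.

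Finally, I would invoke the Cartan--Eilenberg change-of-rings spectral sequence for the extension $1\to N_t\to G_t\to \overline{G}_t\to 1$:
\[
E^2_{p,q}=\Tor_p^{R\overline{G}_t}\bigl(R[\pi(\mathcal{P})],\,\Tor_q^{RN_t}(R,M)\bigr) \;\Longrightarrow\; \Tor_{p+q}^{RG_t}\bigl(R[\pi(\mathcal{P})],\,M\bigr).
\]
Flatness of $R[\pi(\mathcal{P})]$ over $R\overline{G}_t$ forces $E^2_{p,q}=0$ for $p\geq 1$, so the spectral sequence collapses onto the column $p=0$ and delivers the desired local identity. The only genuinely substantive point is identifying $\overline{G}_t$ as a subgroup of $\pi(\mathcal{P})$ in order to secure the flatness of $R[\pi(\mathcal{P})]$; once that is in place, the remaining steps are routine applications of standard homological algebra, and I do not expect any further obstacle.
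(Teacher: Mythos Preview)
Your proof is correct and follows essentially the same approach as the paper: both first apply the Frobenius identity and Shapiro's lemma to reduce to $\bigoplus_{t\in T}\Tor_1^{RG_t}(R[\pi(\mathcal{P})],M)$, and both then use that $\overline{G}_t=G_t/N_t$ embeds in $\pi(\mathcal{P})$ so that $R[\pi(\mathcal{P})]$ is free as a right $R[\overline{G}_t]$-module. The only cosmetic difference is that the paper carries out the second step by a direct decomposition (writing $R[\pi(\mathcal{P})]$ as a free $R[\overline{G}_t]$-module and using $\Tor_1^{RG_t}(R[\overline{G}_t],M)\cong\Tor_1^{RN_t}(R,M)$), whereas you package the same computation as the collapse of the Cartan--Eilenberg spectral sequence; the content is identical.
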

 \begin{proof} For each $x \in Y$, put $\overline{G_x} = G_x / N_x$.  
Then we have
\begin{align*}
   \Tor^{RG}_1\!\bigl(R[\pi(\mathcal{P})],\, R[Y] \otimes_R M \bigr) 
   &\;\cong\; \bigoplus_{t \in T} \Tor_1^{RG_t}\!\bigl(R[\pi(\mathcal{P})], M \bigr) \\[4pt]
   &\;\cong\; \bigoplus_{t \in T} R\!\left[ {\pi(\mathcal{P})}/{\overline{G_t}}\right] \otimes_R \Tor_1^{RN_t}(R, M) \\[4pt]
   &\;\cong\; \bigoplus_{t \in T} R[\pi(\mathcal{P})] \otimes_{R[\overline{G_t}]} \Tor_1^{RN_t}(R, M).
\end{align*}

 \end{proof}
\begin{lem}\label{fp-mod}
    Let $\overline{M}$ be an $R[\pi(\mathcal{P})]$-module of type $\FP$ and assume that \cref{resolutionM} is a complete lifting of $\overline M$.   Then we have that 
    \begin{enumerate}
        \item $\overline M_1$ is of type $\FP$ and
        \item for any $y\in Y$, the  $R[G_y/N_y]$-module $\Tor_1^{RN_y}(R, M)$ is of type $\FP$ and for all but finitely many   $G$-orbits in $Y$, $\Tor_1^{RN_y}(R, M)=0$.
    \end{enumerate}
\end{lem}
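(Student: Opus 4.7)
\emph{Proof plan.} I would extract both statements from the short exact sequence
\[
0 \to \overline{U} \to \overline{M_1} \to \overline{M_1'} \to 0
\]
read off the top row of \cref{diagramM}, where $\overline{U} := \Tor_1^{RG}(R[\pi(\mathcal{P})], M)$ and $\overline{M_1'}$ denotes the image of $\overline{M_1}$ inside $R[\pi(\mathcal{P})] \otimes_{RG} L_0$. From the short exact sequence $0 \to \overline{M_1'} \to R[\pi(\mathcal{P})] \otimes_{RG} L_0 \to \overline{M} \to 0$ and the hypothesis that $\overline{M}$ is of type $\FP$, the two-out-of-three property for $\FP$ makes $\overline{M_1'}$ of type $\FP$, and in particular finitely presented.

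Since $M_1 = \im \tau_1$ is finitely generated over $RG$, the module $\overline{M_1}$ is finitely generated over $R[\pi(\mathcal{P})]$. Combined with the finite presentability of $\overline{M_1'}$, a standard argument shows that the kernel $\overline{U}$ is finitely generated. Applying \cref{tor1} to the complete lifting then gives the decomposition
\[
\overline{U} \;\cong\; \bigoplus_{y \in T} V_y, \qquad V_y := R[\pi(\mathcal{P})] \otimes_{R[G_y/N_y]} \Tor_1^{RN_y}(R, M),
\]
over a set $T \subset Y$ of $G$-orbit representatives. Any finite generating set of $\overline{U}$ meets only finitely many summands, so all but finitely many $V_y$ vanish.

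Next, I would fix $y \in T$ with $V_y \ne 0$ and show that $\Tor_1^{RN_y}(R, M)$ is of type $\FP$ over $R[G_y/N_y]$. The Cohen--Lyndon property over $R$ guarantees $N_y = y$ and $G_y \cap N_{\mathcal{P}} = y$, so $G_y/N_y$ embeds into $\pi(\mathcal{P})$, and the coset decomposition makes $R[\pi(\mathcal{P})]$ into a non-trivial free, hence faithfully flat, $R[G_y/N_y]$-module. A standard descent argument---express each $R[\pi(\mathcal{P})]$-generator of $V_y$ as a finite combination of pure tensors and invoke faithful flatness---forces $\Tor_1^{RN_y}(R, M)$ to be finitely generated over $R[G_y/N_y]$. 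Coherence of $R[G_y/N_y]$ then promotes this to type $\FP_\infty$, and \cref{critfinitepd}, applied with $R$ regular and $\cd_R(G_y/N_y) < \infty$, bounds its projective dimension. Hence $\Tor_1^{RN_y}(R, M)$ is of type $\FP$. Faithful flatness identifies $V_y = 0$ with $\Tor_1^{RN_y}(R, M) = 0$, so the vanishing claim in (2) follows from the previous paragraph, and the $\FP$ statement for arbitrary $y \in Y$ follows by transporting along $G$-conjugation within each orbit.

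For (1), each non-zero $V_y$ is of type $\FP$ over $R[\pi(\mathcal{P})]$ because the induction functor $R[\pi(\mathcal{P})] \otimes_{R[G_y/N_y]} -$ is exact and preserves finitely generated projectives. A finite direct sum of $\FP$ modules is $\FP$, so $\overline{U}$ is of type $\FP$, and two-out-of-three applied to $0 \to \overline{U} \to \overline{M_1} \to \overline{M_1'} \to 0$ yields (1). The main technical step, anticipated throughout, is the faithful flatness descent isolating finite generation of $\Tor_1^{RN_y}(R, M)$ from that of its induced module; the remainder is a careful orchestration of standard results.
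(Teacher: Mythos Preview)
Your route matches the paper's almost exactly: the same four-term exact sequence, the same reduction to the induced pieces via \cref{tor1}, and the same appeal to coherence of $R[G_y/N_y]$ together with \cref{critfinitepd}. There is, however, one genuine gap.

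You deduce only that $\Tor_1^{RN_y}(R,M)$ is \emph{finitely generated} over $R[G_y/N_y]$ and then write ``Coherence of $R[G_y/N_y]$ then promotes this to type $\FP_\infty$''. Coherence does no such thing: over a coherent ring every $\FP_1$ (finitely presented) module is $\FP_\infty$, but a merely finitely generated module need not even be $\FP_1$ (take a cyclic module over $k[x_1,x_2,\dots]$ with infinitely generated relation ideal). So the step from finite generation to $\FP$ is unjustified as written.

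The fix is exactly what the paper does, and it costs almost nothing with the hypotheses already in play. Since $RG$ is coherent and $\cd_R(G)<\infty$, the module $M_1$ is of type $\FP$ over $RG$; in particular it is $\FP_1$, so right-exactness of the tensor product makes $\overline{M_1}$ of type $\FP_1$ over $R[\pi(\mathcal P)]$, not just finitely generated. Now apply the two-out-of-three property with the correct shift (\cite[Proposition~1.4]{Bieri_Notes}): from $\overline{M_1}$ of type $\FP_1$ and $\overline{M_1'}$ of type $\FP$ (hence $\FP_2$) you get $\overline U$ of type $\FP_1$. Your faithful-flatness descent then upgrades to $\Tor_1^{RN_y}(R,M)$ being $\FP_1$ over $R[G_y/N_y]$ (finite presentation descends along faithfully flat maps, by the same pure-tensor argument applied twice). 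At that point coherence legitimately gives $\FP_\infty$, and the rest of your argument goes through unchanged.
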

\begin{proof} Let $T\subset Y$ be a set of representatives of $G$-orbits.
    Combining \cref{maindiagram} and \cref{tor1}, we obtain the exact sequence
    $$0\to \bigoplus_{t\in T} R[\pi(\mathcal{P})]\otimes_{R[G_t/N_t]} \Tor_1^{RN_t}(R, M)\to \overline{M_1}\to R[\pi(\mathcal{P})]\otimes_{RG} L_0\xrightarrow{\gamma} \overline M\to 0.$$
Since $\overline M$ is of type $\FP$, $\ker \gamma$ is of type $\FP$. Thus, since $\overline M_1$ is of type $\FP_1$, by \cite[Proposition 1.4]{Bieri_Notes}, the $R[\pi(\mathcal{P})]$-module $$\bigoplus_{t\in T} R[\pi(\mathcal{P})]\otimes_{R[G_t/N_t]}  \Tor_1^{RN_t}(R, M)$$ is also of type $\FP_1$. Therefore, for each $t\in T$, the $R[G_t/N_t]$-module  $\Tor_1^{RN_t}(R, M)$ is of type $\FP_1$ and for all but finitely many   $t\in T$, $\Tor_1^{RN_t}(R, M)=0$. Since $\cd_R(G_t/N_t)<\infty$, $R$ is regular and the group ring $R[G_t/N_t]$ is coherent, \cref{critfinitepd} implies that $\Tor_1^{RN_t}(R, M)$ is of type $\FP$. Therefore, $\overline {M_1}$ is also of type $\FP$.
\end{proof}
Observe that, since any induced lifting is complete, using induction on $i$ and the previous lemma we obtain also that for every $1\le i\le n$,
  \begin{enumerate}
        \item $\overline M_i$ is of type $\FP$ and
        \item for any $y\in Y$, the  $R[G_y/N_y]$-module $\Tor_1^{RN_y}(R, M_i)$ is of type $\FP$ and for all but finitely many   $G$-orbits in $Y$, $\Tor_1^{RN_y}(R, M_i)=0$.
    \end{enumerate}

Now we are ready to prove \cref{teoK_0}.

\begin{proof}[Proof of \cref{teoK_0}]
 
Let 
\[
\theta \colon K_0(RG) \oplus \bigoplus_{x \in X} K_0\big(R[N_G(G_x)/G_x]\big) 
        \longrightarrow G_0\big(R[\pi(\mathcal{P})]\big)
\]
be the composition of the map
\[
K_0(RG) \oplus \bigoplus_{x \in X} K_0\big(R[N_G(G_x)/G_x]\big) 
        \longrightarrow K_0\big(R[\pi(\mathcal{P})]\big)
\]
and $\kappa_{R[\pi(\mathcal{P})]}$.  
Since, by \cref{K0G0}, $\kappa_{R[\pi(\mathcal{P})]}$ is an isomorphism, it suffices to show that $\theta$ is surjective.

Let $\overline{M}$ be an $R[\pi(\mathcal{P})]$-module of type $\FP$.  Our aim is to show that $[\overline M]\in \im \theta$.

By \cref{completeexists}, we may assume that \cref{resolutionM} is a complete lifting of $\overline{M}$.  
Put $M_0 = M$ and $\overline{M_0} = \overline{M}$.  
By \cref{fp-mod} and the remark after it,  all modules $\overline{M_i}$ ($0 \le i \le n$) are of type $\FP$.  
We will prove, by inverse induction on $i$, that $[\overline{M_i}] \in G_0(R[\pi(\mathcal{P})])$ belongs to the image of~$\theta$.

The base case $i = n$ is clear because $M_n \cong L_n$ is projective and finitely generated.  
Suppose we have proved that $[\overline{M_k}] \in \im \theta$ for all $k > i$.  
Then we have an exact sequence
\[
0 \longrightarrow M_{i+1} \longrightarrow L_{i+1} \longrightarrow M_i \longrightarrow 0,
\]
which, by \cref{maindiagram} and the remark after \cref{fp-mod}, induces an exact sequence
\begin{align*}
0 \;\longrightarrow\;& 
   \bigoplus_{j=1}^{k} R[\pi(\mathcal{P})] \otimes_{R[G_{t_j}/N_{t_j}]} 
   \Tor_1^{RN_{t_j}}(R, M_i) 
   \;\longrightarrow\; \overline{M_{i+1}} \\[4pt]
\longrightarrow\;& 
   R[\pi(\mathcal{P})] \otimes_{RG} L_i 
   \xrightarrow{\;\gamma\;} \overline{M_i} 
   \;\longrightarrow\; 0 .
\end{align*}

where, for each $j$, the $R[G_{t_j}/N_{t_j}]$-module
\[
U_j = \Tor_1^{RN_t}(R, M_i)
\]
is of type $\FP$.  
If $P_j$ is a finitely generated projective $R[G_{t_j}/N_{t_j}]$-module such that $[U_j] = [P_j]$ in $G_0(R[G_{t_j}/N_{t_j}])$, then
\[
\left [R[\pi(\mathcal{P})]\otimes_{R[G_{t_j}/N_{t_j}]} U_j\right ] 
= \left [R[\pi(\mathcal{P})]\otimes_{R[G_{t_j}/N_{t_j}]} P_j\right ]
\]
in $G_0(R[\pi(\mathcal{P})])$, because $G_{t_j}/N_{t_j}$ is a subgroup of $\pi(\mathcal{P})$.  
Hence $$\left [R[\pi(\mathcal{P})]\otimes_{R[G_{t_j}/N_{t_j}]} U_j\right ]$$ lies in the image of~$\theta$.  
By the inductive hypothesis, $[\overline{M_{i+1}}] \in \operatorname{im} \theta$.  We have also that $\left [R[\pi(\mathcal{P})]\otimes_{RG} L_i \right ]\in \im \theta$ because $L_i$ is projective and finitely generated.
Thus $[M_i] \in \im \theta$, and the proof is complete.
\end{proof}

\bibliographystyle{amsalpha}
\bibliography{bibliography}

\end{document}